\newtheorem{theor}{Theorem}[section]
\newtheorem{prop}[theor]{Proposition}
\newtheorem{defin}{Definition}
\newtheorem{lem}[theor]{Lemma}
\newtheorem{cor}[theor]{Corollary}
\newtheorem{example}{Example}
\newtheorem{remark}{Remark}
\newcommand{\R}{\mathbb{R}}
\newcommand{\N}{\mathbb{N}}
\newcommand{\Hyp}{\mathbb{H}}
\newcommand{\C}{\mathbb{C}}
\newcommand{\T}{\mathbb{T}}
\newcommand{\bC}{\mathbb{C}}
\newcommand{\Z}{\mathbb{Z}}
\newcommand{\scal}{{\rm scal}}
\newcommand{\de}{\partial}
\newcommand{\deb}{\bar\partial}
\newcommand{\W}{\Omega}
\newcommand{\w}{\omega}
\newcommand{\ov}[1]{\overline{#1}}
\newcommand{\K}{K\"ahler}
\newcommand{\Bl}{\operatorname {Bl}}
\newcommand{\wk}{\w}
\newcommand{\hs}{\hspace{0.1em}}
\begin{document}

\title[Kähler duality and projective embeddings]{Kähler duality  and projective embeddings}

\author{Andrea Loi}
\address{(Andrea Loi) Dipartimento di Matematica \\
         Universit\`a di Cagliari (Italy)}
         \email{loi@unica.it}

\author{Roberto Mossa}
\address{(Roberto Mossa) Dipartimento di Matematica \\
         Universit\`a di Cagliari (Italy)}
        \email{roberto.mossa@unica.it}
        
        \author{Fabio Zuddas}
\address{(Fabio Zuddas) Dipartimento di Matematica \\
         Universit\`a di Cagliari (Italy)}
        \email{fabio.zuddas@unica.it}

\thanks{
The authors are supported by INdAM and  GNSAGA - Gruppo Nazionale per le Strutture Algebriche, Geometriche e le loro Applicazioni, by GOACT and MAPS- Funded by Fondazione di Sardegna, by the Italian Ministry of Education, University and Research through the PRIN 2022 project "Developing Kleene Logics and their Applications" (DeKLA), project code: 2022SM4XC8  and partially funded by PNRR e.INS Ecosystem of Innovation for Next Generation Sardinia (CUP F53C22000430001, codice MUR ECS00000038).}

\subjclass[2000]{53C55, 32Q15, 53C24, 53C42 .} 
\keywords{\K\ \ metrics; Cartan-Hartogs domain;  \K-Einstein metric; bounded symmetric domain; homogeneous \K\ manifold; exact domain; dual \K\ domain; dual \K\ metric. \K-Einstein metric; extremal metric.}

\begin{abstract}
Motivated by the duality  theory between  Hermitian symmetric spaces
of noncompact and compact types, we introduce and examine the concept of \K\ duality between exact domains of $\C^n$
(cf. Definitions \ref{exactdomain} and \ref{dualcomplexdom}).

Specifically, we address the following question  (cf.  Definition \ref{FScomp}).

\vskip 0.1cm
\noindent
{\bf Question.} {\em Let $(U, g)$ be  an exact domain 
admitting  a \K\ dual $(U^*, g^*)$. Assume that there exists $\alpha>0$
such that  the following conditions hold:
\begin{itemize}
\item [(a)] 
$(U, \alpha g)$ has a Fubini-Study completion;
\item [(b)]
$(U^*, \alpha g^*)$ has a Fubini-Study compactification.
\end{itemize}
 Is it true that  $(U, g)$  is 
 biholomorphically isometric to  a bounded symmetric  domain?
}

\vskip 0.1cm

In Theorem \ref{mainteor},
we affirmatively answer this question for 
a Cartan-Hartogs domain $M_{\Omega, \mu}$ equipped with an appropriate    metric $g_{\Omega, \mu}$.
In Theorem \ref{mainteor2}, we provide a counterexample by introducing another  \K\ metric $\hat g_{\Omega, \mu}$ on 
$M_{\Omega, \mu}$, that  satisfies both conditions  (a) and (b). 
Furthermore, in Theorem \ref{mainteor3}, we characterize Hermitian symmetric spaces of compact type among classical flag manifolds based solely
on \K\ duality. We also propose and  thoroughly investigate a  conjecture  that our question has a positive answer if  the metric $g$ is \K-Einstein.
 \end{abstract}
 
\maketitle

\tableofcontents

\section{Introduction}
A Hermitian symmetric space of noncompact type   is  a 
\K\ manifold 
$(M, g)$ characterized by the property that,  for all $p\in M$, the geodesic symmetry
$$s_p: \exp_p (v)\mapsto\exp_p (-v), \forall v\in T_pM$$
 is a  globally defined  holomorphic isometry of $(M, g)$.
 
Up to homotheties, $(M, g)$  is  biholomorphically isometric to  a bounded  symmetric domain $\Omega\subset \C^n$ centred at  the origin $0\in \C^n$ equipped with  the  \K\ metric $g_\Omega$ whose associated  \K\ form is 
\begin{equation}\label{defomom}
\omega_\Omega=-\frac{i}{2\pi}\partial\bar\partial\log N_\Omega
\end{equation}
where 
\begin{equation}\label{genericnorm}
N_\Omega(z, \overline{z})=\left(V(\Omega)K_\Omega(z, \overline{z})\right)^{-\frac{1}{\gamma}}
\end{equation}
is the {\em generic norm},
$V(\Omega)$ is the {\em Euclidean volume} of $\Omega$, $\gamma$ the {\em genus} of $\Omega$ and $K_\Omega$ its {\em Bergman kernel}.

Thus 
\begin{equation}\label{linkmetrics}
g_\Omega = \frac{1}{\gamma} g_B
\end{equation}
where $g_B$ is the {\em Bergman metric} on $\Omega$.

Notice that the genus 
$\gamma$ of $\Omega$ is nothing but the Einstein constant 
of the \K-Einstein (KE in the sequel) metric $g_\Omega$.

Every Hermitian symmetric space of noncompact type is  a homogenous 
\K\ manifold
and there exists a complete classification of the irreducible ones, known as  {\em Cartan domains}, which include four classical series and two exceptional cases of complex dimensions $16$ and $27$, respectively.

A Cartan domain $\Omega$ is uniquely determined by a triple of integers $(r,a,b)$,
where $r$ 
represents the rank of $\Omega$, namely the maximal dimension of a complex totally geodesic submanifold of $\Omega$,
and $a$ and $b$ are positive integers such that 
\begin{equation}\label{genus}
\gamma=(r-1)a+b+2
\end{equation}
and
$$n = r + \frac{r (r-1)}{2} a + r b,$$
where  $n$ is the dimension of  $\Omega$ .
The {\em Wallach set} $W(\Omega)\subset {\R}$ of a Cartan  domain $\Omega\subset \C^n$ is a subset of ${\R}$ 
which  depends on  $a$ and   $r$.
More precisely we have
\begin{equation}\label{wallach}
W(\Omega)=\left\{0,\,\frac{a}{2},\,2\frac{a}{2},\,\dots,\,(r-1)\frac{a}{2}\right\}\cup \left((r-1)\frac{a}{2},\,+\infty\right).
\end{equation}

The importance of the Wallach set for our aim is due to the following  fact  (see the beginning of Section \ref{secproof1} for the definition of infinitely projectively induced \K\ metric).

\vskip 0.3cm

\noindent
{\bf Theorem A.\cite[Th.2]{LOIZEDDA2011KEsubProjInf}}
{\em Let $(\Omega, g_\Omega)$
be a Cartan domain. The \K\  metric $\alpha g_\Omega$ is infinitely projectively induced iff $\alpha\in W(\Omega)\setminus\{0\}$.
Consequently, if  $(\Omega, g_\Omega)$
is a bounded symmetric domain, $\alpha g_\Omega$  is
infinitely projectively induced for all sufficiently large $\alpha$.}

\vskip 0.3cm

One of the remarkable aspects  of the theory of Hermitian symmetric spaces  is the concept of {\em duality},
which provides a way to transition from a Hermitian symmetric space of noncompact type  $(\Omega, g_\Omega)$ to a Hermitian symmetric space of compact type
$(\Omega_c, g_{\Omega_c})$, known as the {compact dual} of $(\Omega, g_\Omega)$ (see, e.g. \cite{DISCALALOI2008sympdual} and references therein).

An interesting  feature of this   duality  is reflected in the following chain of  holomorphic embeddings, which hold  for  any bounded symmetric domain $\Omega$:
\begin{equation}\label{embeddings}
0\in\Omega\subset\C^n\stackrel{J}{\rightarrow} \Omega_c\stackrel{BW}{\rightarrow} \C P^d,
\end{equation}
where $J$ is  a holomorphic embedding with dense image   and  $BW$  is  the {\em Borel-Weil} embedding
satisfying $BW^*(g_{FS})=g_{\Omega_c}$ (see  \cite{NAKAGAWATAKAGI1976locsymm} for more details on the value of $d$).
Moreover, 
$\Omega_c=J(\C^n)\sqcup H$, 
where $H=BW^{-1}(\{Z_0=0\})$, and  where 
$[Z_0, \dots ,Z_d]$ denotes the homogeneous coordinates on $\C P^d$. 
The  embeddings \eqref{embeddings}  have been extensively used  to study the symplectic geometry of bounded symmetric domains and theirs duals in \cite{DISCALALOI2008sympdual}, \cite{DISCALALOIROOS2008bisympl}, \cite{LOIMOSSA2011DistExp} (see also \cite{CHANMOK2017holisom} for  interesting rigidity properties of holomorphic isometries of the complex hyperbolic space into bounded symmetric domains). 

If $\omega_{\Omega_c}$ is the \K\ form associated to $g_{\Omega_c}$ it turns out that 
$${J^*\omega_{\Omega_c}}=\frac{i}{2\pi}\partial\bar\partial\log N_\Omega^*,$$
where we define
\begin{equation}\label{Nstar}
N_\Omega^*(z, \bar z):=N_\Omega (z, -\bar z)
\end{equation}
and where  $N_\Omega (z, \bar z)$ is the generic norm of $\Omega$ given by \eqref{genericnorm}.

\begin{example}\rm\label{projspace}
The prototypical  example of Hermitian symmetric space of noncompact type is  the unit ball
$\C H^n=\{z\in \C^n \ | \ |z|^2<1\}$
equipped with the hyperbolic metric $g_{\C H^n}=g_{hyp}$
such that 
$$\omega_{hyp}=-\frac{i}{2\pi}\partial\bar\partial\log N_{\C H^{n}}=-\frac{i}{2\pi}\partial\bar\partial\log (1-|z|^2)$$
In this case $a=2, b=n-1$, $r=1$ and $\gamma =n+1$.
The compact dual of 
$(\C H^n, g_{hyp})$ is $(\C P^n, g_{FS})$ the complex projective space with the Fubini-Study metric  $g_{FS}$.
In the affine chart $U_0=\left\{Z_0\neq 0\right\}\subset\C P^n$,  the Fubini-Study form,  with respect the affine coordinates $z_j=\frac{Z_j}{Z_0}$, is given by
$\omega_{FS}=\frac{i}{2\pi}\de\deb \log \left(1+ |z|^2\right)$. 
Notice that in this  case  the $BW$ embedding is the identity map of $\C P^n$.  
\end{example}

Using the fact that $g_{\Omega_c}$ is projectively induced  by the Borel-Weil embedding  and  $\omega_{\Omega_c}$ is such that $\omega_{\Omega_c}(B)=\pm\pi$,  where $B$ is a generator of 
$H_2(M, \Z)$ (see e.g. \cite{LOIMOSSAZUDDAS2015SymplCap}) we derive  the following compact version of Theorem A.

\vskip 0.3cm

\noindent
{\bf Theorem B.}
{\em Let $(\Omega_c, \alpha g_{\Omega_c})$ be a Hermitian symmetric space of compact type, with $\alpha> 0$. Then  $\alpha g_{\Omega_c}$ is  projectively induced 
iff $\alpha$ is a  positive integer.}

\vskip 0.3cm

Theorems A and B prompt the question of whether the concept of duality is specific to Hermitian symmetric spaces and, in particular, whether it characterizes them. To rigorously address this question (see Question A below), it is natural to begin by introducing some definitions.

\begin{defin}\label{FScomp}
A  compact (resp. complete) \K\  manifold $(\tilde M, \tilde g)$
is a {\em Fubini-Study  compactification (resp. completion)}
of a  \K\ manifold $(M, g)$  if there exists 
a  holomorphic isometric embedding  $J:(M, g)\rightarrow (\tilde M, \tilde g)$ such that $J(M)$
is an open and dense subset  of  $\tilde M$
and $\tilde g$ is finitely (resp. infinitely) projectively induced.  
\end{defin}

\begin{remark}\label{rmkcompl}\rm
Although a compact manifold  is (of course)  complete, Calabi's rigidity theorem \cite[Th.9]{CALABI1953diast} implies that  a \K\ manifold admitting  a Fubini-Study compactification does {\em not} admit a Fubini-Study completion, and viceversa. 
\end{remark}

\begin{example}\rm 
Being $g_{\Omega}$ complete it follows from  Theorem A that 
a bounded symmetric domain $(\Omega, \alpha g_\Omega)$ admits a Fubini-Study completion
iff $\alpha\in W(\Omega)\setminus \{0\}$, 
by trivially setting $(\tilde\Omega, \alpha \tilde g_\Omega) =(\Omega, \alpha g_\Omega)$ and $J=id_\Omega$.
On the other hand,
by Theorem B,
$(\C^n, J^*\alpha g_{\Omega_c})$ with $\alpha\in\Z^+$  admits a Fubini-Study compactification
where  $J:\C^n\rightarrow \Omega_c$ is  the embedding in \eqref{embeddings}.
\end{example}

It is not hard to see that  the \K\ potentials
$-\log N_\Omega$ and 
$\log N_\Omega^*$ (see \eqref{Nstar})
are the {\em Calabi's diastasis functions} 
$D_0^{g_\Omega}$ and $D_0^{J^*g_{\Omega_c}}$
  at the origin $0\in \Omega\subset \C^n$ for  the metrics $g_\Omega$ and  $J^*g_{\Omega_c}$, respectively (see  \cite{CALABI1953diast} or Section \ref{FLAG} below).

Taking inspiration from this fact and by the duality theory  between hermitian symmetric spaces, we propose the following definitions (cf. also
\cite{LOIDISCALA2010sympdualitycomplex}).

\begin{defin}\label{exactdomain}\rm
A pair  $(U, g)$ is called an {\em exact domain}
if $U$  is a 
connected complex  domain of $\C^n$ containing the origin 
and  $g$ is a real analytic \K\ metric whose associated  
\K\ form is given by 
$\omega=\frac{i}{2\pi}\partial\bar\partial D^g_0$,
where $D^g_0$ is Calabi's diastasis function for $g$ at $0\in\C^n$ 
and  $D^g_0$ cannot be extended to an open subset $V$ strictly containing $U$.
\end{defin}

\begin{remark}\rm\label{rmkcompld}
The exploration of the maximal extension domain of Calabi's diastasis function is a complex and profound issue (see \cite{CALABI1953diast} and also  the recent preprint 
\cite{loi2023ricciiterationswellbehavedkahler}). 
 All examples at our disposal of  exact domains $(U, g)$ with $g$ infinitely projectively induced  are  complete. Thus, such domains  admit  a Fubini-Study completion by simply setting $(\tilde U, \tilde g)=(U, g)$ and $J=id_U$.
\end{remark}

\begin{defin}\label{dualcomplexdom}\rm
Let $(U, g)$ and $(U^*, g^*)$ be two exact domains
of $\C^n$.
We say that $(U^*, g^*)$ is the   {\em \K\ dual} of $(U, g)$ and $g^*$ is a \K\ metric dual to $g$  if 
\begin{equation}\label{diastasisstar}
D_0^{g^*}(z, \bar z)=-D_0^{g}(z, -\bar z), \forall z\in U\cap U^*.
\end{equation}
\end{defin}

One can easily construct explicit examples of exact domains  not admitting 
 a \K\ dual (cf. Section  \ref{FLAG}).

\begin{example}\rm\label{boundedbochner}
Any bounded symmetric domain $(\Omega, g_\Omega)$ is an exact domain
and its  \K\ dual is the exact domain given by 
$(\Omega^*, g^*_\Omega)=(\C^n, J^*g_{\Omega_c})$, where  $g_{\Omega_c}$ is the \K\ metric on the compact dual $\Omega_c$
of $\Omega$  and $J:\C^n\rightarrow \Omega_c$ is the embedding  in \eqref{embeddings}.
\end{example}

Inspired by Theorems A and Theorem B,  Definitions \ref{FScomp},  \ref{exactdomain},  \ref{dualcomplexdom}  and Example \ref{boundedbochner} we address the following

\vskip 0.3cm

\noindent
{\bf Question A.} {\em Let  $(U, g)$ be an exact domain 
admitting a \K\ dual $(U^*, g^*)$. Suppose   that there exists $\alpha>0$
for which the following properties hold true:
\begin{itemize}
\item [(a)] 
$(U, \alpha g)$ has a Fubini-Study completion;
\item [(b)]
$(U^*, \alpha g^*)$ has a Fubini-Study compactification.
\end{itemize}
 Is it true that  $(U, g)$  is  
 biholomorphically isometric to  a bounded symmetric  domain $(\Omega,  \lambda g_\Omega)$,
 for some $\lambda\in\R^+$
(and hence  $\lambda\alpha\in W(\Omega)\cap \Z^+$)?}

\vskip 0.3cm

In this paper we consider Question A in the context where $U$ is a {\em Cartan-Hartogs domain} 
(CH domain   in the sequel) equipped with two  natural \K\ metrics $g_{\Omega, \mu}$
and $\hat g_{\Omega, \mu}$ (see below).
CH domains   are a 1-parameter family of noncompact  domains of $\mathbb{C}^{n+1}$, given by:
\begin{equation*}
M_{\Omega, \mu}:=\left\{(z, w) \in \Omega \times\mathbb{C}\, |\, \left|w\right|^2<N_{\Omega}^\mu(z, \bar{z})\right\}
\end{equation*}
where $\Omega \subset \mathbb{C}^n$ is a Cartan domain,  
known as  the base of $M_{\Omega, \mu}$,   $N_{\Omega}(z, \bar{z})$ is its generic norm,  and  $\mu>0$ is a positive real parameter.

 The first metric on $M_{\Omega, \mu}$ that we consider 
 is the  \K\   metric $g_{\Omega, \mu}$,
 introduced and studied by  G. Roos, A. Wang, W. Yin, L. Zhang, W. Zhang in \cite{WANGYINZHANGROSS2006KEonHartogs} and \cite{WANGYINZHANGZHANG2004KEonNNhom}.
The  associated \K\ form is given by
\begin{equation}\label{eqKobform}
\wk_{\Omega, \mu}=-\frac{i}{2\pi}\de\deb \log \left(N_{\Omega}^\mu(z, \bar{z})-|w|^2\right).
\end{equation}


One of  the important features of this metric  is the following result
which shows that for suitable values of the parameter $\mu$
it is KE.

\vskip 0.3cm
\noindent
{\bf Theorem C.}(\cite{WANGYINZHANGROSS2006KEonHartogs} and \cite{WANGYINZHANGZHANG2004KEonNNhom})
{\em   Let $M_{\Omega, \mu}$ be a  CH domain. The metric 
$g_{\Omega, \mu}$ is KE
iff  $\mu =\frac{\gamma}{n+1}$, where $n$ is the complex dimension of $\Omega$ and $\gamma$ its genus.}
\vskip 0.3cm


The following theorem represents the first result of this paper. It provides a positive answer to Question A for a CH domain
$(M_{\Omega, \mu},  g_{\Omega, \mu})$ and  also characterizes the complex hyperbolic space among CH domains.

\begin{theor}\label{mainteor}
The  CH domain 
$(M_{\Omega, \mu}, g_{\Omega, \mu})$
is an exact domain that admits a \K\  dual $(M_{\Omega, \mu}^*=\C^{n+1}, g_{\Omega, \mu}^*)$.
Moreover, $(M_{\Omega, \mu}, \alpha g_{\Omega, \mu})$
admits a Fubini-Study completion for all  sufficiently large $\alpha$. 
Additionally,  $(\C^{n+1}, \alpha g_{\Omega, \mu}^*)$ admits a Fubini-Study compactification for some $\alpha$
iff $M_{\Omega, \mu}$ is homogeneous, i.e.  $(M_{\Omega, \mu}, \alpha g_{\Omega, \mu})=(\C H^{n+1}, g_{hyp})$.
 \end{theor}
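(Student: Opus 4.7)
The theorem has four parts; I treat them in sequence. \emph{Exact domain:} the K\"ahler potential $\Phi = -\log(N_\Omega^\mu - |w|^2)$ of $g_{\Omega,\mu}$ vanishes on the slices $\{\bar z = \bar w = 0\}$ and $\{z = w = 0\}$, since $N_\Omega(\cdot, 0) \equiv N_\Omega(0, \cdot) \equiv 1$; Calabi's characterization then identifies $\Phi$ with $D_0^{g_{\Omega,\mu}}$, and divergence of $\Phi$ on $\partial M_{\Omega,\mu}$ rules out any real-analytic extension. \emph{K\"ahler dual:} Definition \ref{dualcomplexdom} forces $D_0^{g^*_{\Omega,\mu}} = \log(N_\Omega^{*\mu} + |w|^2)$, which is real-analytic on all of $\C^{n+1}$ because $N_\Omega^* > 0$ on $\C^n$ (being the exponential of the K\"ahler potential $\log N_\Omega^*$ of $J^* g_{\Omega_c}$). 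A positive-definiteness check modeled on the computation for $\omega_{\Omega,\mu}$ shows this defines a K\"ahler metric $g^*_{\Omega,\mu}$ on $\C^{n+1}$. \emph{Completion for large $\alpha$:} since $g_{\Omega,\mu}$ is complete, take $(\tilde M, \tilde g) = (M_{\Omega,\mu}, \alpha g_{\Omega,\mu})$ with $J = \mathrm{id}$; expanding $(N_\Omega^\mu - |w|^2)^{-\alpha} = \sum_{k \geq 0} \binom{\alpha + k - 1}{k} |w|^{2k} N_\Omega^{-\mu(\alpha + k)}$ and invoking Theorem A on each factor (valid for every $k \geq 0$ once $\mu\alpha > (r-1)a/2$) yields the required infinite sum-of-squares representation.

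\emph{Compactification iff homogeneous, direction $\Leftarrow$:} by the classification of homogeneous CH domains, homogeneity forces $\Omega = \C H^n$ and $\mu = 1$, so $M_{\Omega,\mu} = \C H^{n+1}$ and the dual potential $\log(1 + |z|^2 + |w|^2)$ is the Fubini-Study potential on $\C^{n+1} \subset \C P^{n+1}$, giving the compactification for $\alpha \in \Z^+$. \emph{Direction $\Rightarrow$:} assume $(\tilde M, \tilde g)$ is a Fubini-Study compactification. Composing $\C^{n+1} \hookrightarrow \tilde M$ with a Kodaira embedding produces a holomorphic isometric map $\phi: (\C^{n+1}, \alpha g^*_{\Omega,\mu}) \to (\C P^N, g_{FS})$ satisfying $\|\phi\|^2 = (N_\Omega^{*\mu} + |w|^2)^\alpha$ after gauge-fixing the pluriharmonic ambiguity. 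Restricting $\phi$ to $\{z = 0\} \cong \C$ forces $(1 + |w|^2)^\alpha = \sum_j |f_j(w)|^2$ with polynomial $f_j$, so $\alpha \in \Z^+$. Then binomially expanding $(N_\Omega^{*\mu} + |w|^2)^\alpha = \sum_{k=0}^\alpha \binom{\alpha}{k} N_\Omega^{*\mu(\alpha-k)} |w|^{2k}$, polynomiality of the LHS in $(z, \bar z, w, \bar w)$ together with irreducibility of $N_\Omega^*$ in the polynomial ring on $\C^n$ force $\mu \in \Z^+$.

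The decisive step is excluding all $(\Omega, \mu) \neq (\C H^n, 1)$. By Calabi's uniqueness of projective K\"ahler immersions and the density of $\C^{n+1} \subset \tilde M$, the Kodaira image of $\tilde M$ coincides with the Zariski closure $V = \overline{\phi(\C^{n+1})} \subset \C P^N$, so smoothness of $\tilde M$ forces $V$ smooth. Explicitly, the coordinates of $\phi$ are $\sqrt{\binom{\alpha}{k}}\, w^k s_\ell^{(k)}(z)$, where $\{s_\ell^{(k)}\}_\ell$ is a Borel-Weil basis realizing $(N_\Omega^*)^{\mu(\alpha-k)} = \sum_\ell |s_\ell^{(k)}|^2$ via the embedding $J:\C^n \to \Omega_c$ at level $\mu(\alpha - k)$. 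The $S^1$-action $(z, w) \mapsto (z, e^{i\theta}w)$ is isometric for $g^*_{\Omega,\mu}$, hence extends to $\tilde M$; its Bia{\l}ynicki-Birula fixed-locus decomposition identifies $\tilde M$ with a $\C P^1$-fibration over the closure of $\{w=0\}$, which by Theorem B is $\Omega_c$, yielding $\tilde M$ as a contraction of the projective bundle $\Pro(\mathcal{O}_{\Omega_c} \oplus \mathcal{O}_{\Omega_c}(\mu))$. For $V$ to be smooth while admitting $\C^{n+1}$ as an affine open chart, the infinity section must contract to a single point, requiring its normal bundle $\mathcal{O}_{\Omega_c}(-\mu)$ to equal $\mathcal{O}_{\C P^n}(-1)$. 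This forces $\Omega_c = \C P^n$ (so $\Omega = \C H^n$) and $\mu = 1$, whence $V = \C P^{n+1}$.

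\emph{Main obstacle.} The last paragraph is the real difficulty. Its subtleties are: (i) the $\C^*$-extension of the $S^1$-action and the attendant Bia{\l}ynicki-Birula description of $\tilde M$; (ii) tracking Chern-class data of the integer K\"ahler class on $\tilde M$ precisely enough to identify the line bundle as $\mathcal{O}(\mu)$; and (iii) invoking the standard smooth blow-down criterion to reduce smoothness of $V$ (with $\C^{n+1}$ open-dense) to $(\Omega_c, \mathcal{O}(\mu)) \cong (\C P^n, \mathcal{O}(1))$. A parallel, more hands-on route is direct local singularity analysis of $V$ at the $\C^*$-fixed boundary orbits using the explicit monomial form of $\phi$: the rescaling $\phi/w^\alpha$ at $|w|\to\infty$ collapses $z$-directions non-linearly of degree $\mu\cdot \deg N_\Omega^*$, producing singularities unless simultaneously $\mu = 1$ and $\deg N_\Omega^* = 2$, which again isolates $\Omega = \C H^n$ and $\mu = 1$.
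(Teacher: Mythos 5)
Your overall strategy for the hard implication matches the paper's: use Calabi rigidity to pin the isometric embedding of $(\C^{n+1},\alpha g^*_{\Omega,\mu})$ down to the explicit monomial map built from Borel--Weil sections, and then show that the closure of its image is singular at the point at infinity unless $\Omega_c=\C P^n$ and $\mu=1$. (The paper first reduces to $\alpha=1$ via its Lemma \ref{lemmamain} and then writes the closure as the cone $\{P_j(X_0,\dots,X_{d_\mu})=0\}$ over $F_\mu(\Omega_c)$ with vertex $[0,\dots,0,1]$, where singularity follows at once from $\deg P_j\geq 2$ and the implicit function theorem; your Bia{\l}ynicki-Birula/blow-down packaging reaches the same conclusion but imports heavier machinery --- the extension of $\phi$ to $\Pro(\mathcal O\oplus L^{\otimes\mu})$ and the converse smooth blow-down criterion --- that you would still need to justify. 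Your ``hands-on'' tangent-cone variant is essentially the paper's argument.)

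The genuine gap is your derivation of $\mu\in\Z^+$. You write that ``polynomiality of the LHS in $(z,\bar z,w,\bar w)$ together with irreducibility of $N_\Omega^*$ in the polynomial ring'' forces $\mu\in\Z^+$, but the left-hand side $(N_\Omega^{*\mu}+|w|^2)^\alpha$ is not known to be a polynomial unless $\mu$ is already an integer --- the $\phi_j$ are merely entire, and a finite sum $\sum_j|\phi_j|^2$ of squared moduli of entire functions need not be polynomial --- so the argument is circular as stated. This step is exactly the hard half of the paper's Proposition \ref{propproj}, which is proved by restricting to the slice $\{w=1\}$ and carrying out a delicate asymptotic analysis showing that the Taylor coefficients $A_h$ of $\bigl(\bigl(1+|\xi|^2\bigr)^{a/b}+1\bigr)^{bk}$ eventually alternate in sign when $a/b\notin\Z$. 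Your gap is repairable with tools you already have on the table: since the right-hand side of your binomial expansion contains no off-diagonal terms $w^k\bar w^l$ with $k\neq l$, matching the coefficient of $|w|^{2(\alpha-1)}$ in $\sum_j|\phi_j(z,w)|^2=\sum_{k,l}\bigl(\sum_j\phi_{j,k}(z)\overline{\phi_{j,l}(z)}\bigr)w^k\bar w^l$ gives $\alpha\,N_\Omega^{*\mu}=\sum_j|\phi_{j,\alpha-1}|^2$ with $\phi_{j,\alpha-1}$ entire on $\C^n$; restricting to a minimal rational curve, on which $N_\Omega^*$ becomes $1+|\xi|^2$, exhibits $(1+|\xi|^2)^\mu$ as a finite sum of squares, and positivity of the Taylor coefficients $\binom{\mu}{k}$ then forces $\mu\in\Z^+$. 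Incidentally, this repair is shorter than the paper's own route. The remaining parts of your proposal (exact domain, existence of the dual, the completion for large $\alpha$ via the geometric-series expansion and Theorem A, and the direction $\Leftarrow$) agree with the paper, which delegates the completion statement to its Proposition \ref{ioemiki}.
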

 \begin{remark}\rm
 It is worth pointing out that the \K\ dual $(\C^{n+1}, \alpha g_{\Omega, \mu}^*)$
 of $(M_{\Omega, \mu}, g_{\Omega, \mu})$
 has also been examined in \cite{MOSSAZEDDAS2022symplch}
 from a symplectic perspective, with the aim  to extend the symplectic duality 
 given in \cite{DISCALALOI2008sympdual} for symmetric spaces  to CH domains.
 \end{remark}

The second metric $\hat g_{\Omega, \mu}$ on a  CH domain  $M_{\Omega, \mu}$ that we consider in this paper 
 is the \K\ metric whose associated \K\ form is given by (see Section \ref{secproof2} for details):

\begin{equation}\label{modmetric}
\hat \omega_{\Omega, \mu} =-\frac{i}{2\pi} \partial \bar\partial \log (N^\mu_\Omega(z, \bar z) - |w|^2) - \frac{i}{2\pi} \partial \bar\partial \log N_\W^\mu(z, \bar z) 
\end{equation}

In the following theorem, which represents  our second main result,  we show that the CH domain  $(M_{\Omega, \mu},\hat g_{\Omega, \mu})$
satisfies the assumptions (a) and (b) of Question A, for suitable values of  $\alpha$ and $\mu$. Therefore,  Question A has a negative answer  for these values.

\begin{theor}\label{mainteor2}
The CH domain 
$(M_{\Omega, \mu}, \hat g_{\Omega, \mu})$
is an exact domain that admits a \K\ dual $(M_{\Omega, \mu}^*=\C^{n+1}, \hat g_{\Omega, \mu}^*)$. 
For $\mu\in \Z^+$ and  sufficiently large integer $\alpha$, 
$(M_{\Omega, \mu},  \alpha\hat g_{\Omega, \mu})$
has a Fubini-Study completion 
and 
$(\C^{n+1}, \alpha\hat g_{\Omega, \mu}^*)$
has a Fubini-Study compactification. 
\end{theor}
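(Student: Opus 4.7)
From the expression \eqref{modmetric} the natural potential of $\hat g_{\Omega,\mu}$ at the origin is
\[
D_0^{\hat g_{\Omega,\mu}}(z,w,\bar z,\bar w)=-\log\bigl(N_\Omega^\mu(z,\bar z)-|w|^2\bigr)-\mu\log N_\Omega(z,\bar z).
\]
Because $N_\Omega(z,0)=N_\Omega(0,\bar z)=1$ for the generic norm, both summands have vanishing pure-holomorphic and pure-antiholomorphic jets at the origin, so this is genuinely the Calabi diastasis. Its maximal real-analytic domain is $M_{\Omega,\mu}$ itself: the first summand diverges on $\partial M_{\Omega,\mu}$ while the second is regular on a neighbourhood of $\overline{M_{\Omega,\mu}}$. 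Applying Definition \ref{dualcomplexdom} together with \eqref{Nstar} then produces the dual diastasis
\[
D_0^{\hat g^*_{\Omega,\mu}}(z,w,\bar z,\bar w)=\log\bigl(N_\Omega^{*\mu}(z,\bar z)+|w|^2\bigr)+\mu\log N_\Omega^*(z,\bar z).
\]
Since $\log N_\Omega^*$ is the pullback via $J$ of a smooth K\"ahler potential on $\Omega_c$ (see the discussion after \eqref{embeddings}), $N_\Omega^*$ is strictly positive on all of $\C^n$ and the right-hand side is real-analytic on $\C^{n+1}$; this identifies $(\C^{n+1},\hat g^*_{\Omega,\mu})$ as the K\"ahler-dual exact domain.

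\textbf{Step (iii): Fubini--Study completion.} Completeness of $\alpha\hat g_{\Omega,\mu}$ follows from the pointwise domination $\hat g_{\Omega,\mu}\ge g_{\Omega,\mu}$ and the known completeness of $g_{\Omega,\mu}$ on $M_{\Omega,\mu}$. For the projective embedding I split
\[
\alpha\hat\omega_{\Omega,\mu}=\alpha\omega_{\Omega,\mu}+\alpha\mu\,p^*\omega_\Omega,\qquad p\colon M_{\Omega,\mu}\to\Omega,
\]
and invoke two ingredients: Theorem \ref{mainteor} provides an infinite-dimensional projective embedding of $(M_{\Omega,\mu},\alpha g_{\Omega,\mu})$ for all sufficiently large $\alpha$, while $\alpha\mu\in W(\Omega)\setminus\{0\}$ (automatic for $\alpha$ large) triggers Theorem A for $(\Omega,\alpha\mu g_\Omega)$. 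Combining the two embeddings via the Calabi--Segre tensor product realizes the sum of the pulled-back Fubini--Study forms, yielding the desired infinite projective embedding of $\alpha\hat g_{\Omega,\mu}$.

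\textbf{Step (iv): Fubini--Study compactification.} The scaled dual potential can be written
\[
\alpha D_0^{\hat g^*_{\Omega,\mu}}=\log\Bigl[N_\Omega^{*\alpha\mu}\,\bigl(N_\Omega^{*\mu}+|w|^2\bigr)^\alpha\Bigr].
\]
Binomial expansion gives
\[
N_\Omega^{*\alpha\mu}\bigl(N_\Omega^{*\mu}+|w|^2\bigr)^\alpha=\sum_{k=0}^\alpha\binom{\alpha}{k}N_\Omega^{*\mu(2\alpha-k)}\,|w|^{2k},
\]
where each exponent $\mu(2\alpha-k)\ge\mu\alpha$ is a positive integer. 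By Theorem B applied through the Borel--Weil embedding of $\Omega_c$, each factor admits the polynomial decomposition $N_\Omega^{*\mu(2\alpha-k)}=1+\sum_i|f_i^{(k)}(z)|^2$ with $f_i^{(k)}\in\C[z_1,\dots,z_n]$. Substituting and regrouping rewrites
\[
N_\Omega^{*\alpha\mu}\bigl(N_\Omega^{*\mu}+|w|^2\bigr)^\alpha=1+\sum_j|H_j(z,w)|^2
\]
for finitely many holomorphic polynomials $H_j\in\C[z_1,\dots,z_n,w]$ vanishing at the origin. This yields a finite-dimensional holomorphic isometric embedding $\iota=[1:H_1:\dots:H_D]\colon\C^{n+1}\to\C P^D$. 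Since each $H_j$ has $w$-degree at most $\alpha$ and its $z$-part extends to a global section of a Borel--Weil line bundle on $\Omega_c$, $\iota$ extends holomorphically to $\Omega_c\times\C P^1$, and its image provides the sought Fubini--Study compactification.

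\textbf{Main obstacle.} The algebraic sum-of-squares unpacking in Step (iv) is formal, but verifying that the extended map $\bar\iota\colon\Omega_c\times\C P^1\to\C P^D$ is an \emph{isometric embedding} of a Fubini--Study compactification---rather than producing a singular image or contracting part of the boundary divisor $(\Omega_c\setminus J(\C^n))\cup(\Omega_c\times\{\infty\})$---is the delicate point. This reduces to a separation-of-points/tangents check, which I expect to follow from very-ampleness of sufficiently high tensor powers of the Borel--Weil line bundle combined with the exact $w$-degree $\alpha$ control on the $H_j$'s.
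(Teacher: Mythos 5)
Your Steps (i)--(iii) are essentially sound and close to the paper's route: completeness of $\hat g_{\Omega,\mu}$ by domination of $g_{\Omega,\mu}$ is exactly the argument in Proposition \ref{propmod}, and your Segre tensor-product of the two embeddings coming from Proposition \ref{ioemiki} and Theorem A is a legitimate (and slightly slicker) way to get the sufficiency needed for the completion statement, though it does not recover the sharp ``iff'' condition $(2\alpha+s)\mu\in W(\Omega)\setminus\{0\}$ that the paper extracts via Calabi's criterion and the block decomposition of the coefficient matrix. The sum-of-squares unpacking of $N_\Omega^{*\alpha\mu}(N_\Omega^{*\mu}+|w|^2)^{\alpha}$ in Step (iv) is also correct and does produce a finite holomorphic isometric immersion of $(\C^{n+1},\alpha\hat g^*_{\Omega,\mu})$ into some $\C P^D$.

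The genuine gap is in the identification of the compactifying manifold. You claim $\iota$ extends to $\Omega_c\times\C P^1$; it cannot. Restricting $\alpha\hat\omega^*_{\Omega,\mu}$ to the two sections $w=0$ and $w=\infty$ gives forms in the classes $2\alpha\mu[\omega_{\Omega_c}]$ and $\alpha\mu[\omega_{\Omega_c}]$ respectively, whereas on a \emph{product} $\Omega_c\times\C P^1$ the restriction of any fixed $(1,1)$-class to the slices $\Omega_c\times\{pt\}$ is independent of the point. So no holomorphic map of $\Omega_c\times\C P^1$ into $\C P^D$ can pull back $g_{FS}$ to an extension of $\alpha\hat g^*_{\Omega,\mu}$, and your acknowledged ``main obstacle'' (separation of points and tangents on the boundary divisor) is not the only problem --- the target total space itself is wrong. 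The correct compactification is the \emph{nontrivial} $\C P^1$-bundle $P_{\Omega_c,\mu}=\Delta_{\Omega_c}^*P(\C\oplus L^{\otimes\mu})$ over $\Omega_c$, where $L^{\otimes\mu}=F_\mu^*(O(1))$; since $L^{\otimes\mu}$ is ample, $P(\C\oplus L^{\otimes\mu})\not\cong\Omega_c\times\C P^1$. The paper closes precisely the gap you flag by realizing $P_{\Omega_c,\mu}$ inside the blow-up $\Bl_{[0,\dots,0,1]}(\C P^{N+1})\subset\C P^{N+1}\times\C P^N$ via the explicit local maps \eqref{identif}, composing with the Segre embedding to define $g_{P,\mu}$ as an honest pull-back of $g_{FS}$ from a globally defined embedding of a compact manifold, and then checking $\hat J^*\omega_{P,\mu}=\hat\omega^*_{\Omega,\mu}$ by the computation \eqref{omegaP}--\eqref{Ngood}; your polynomial components $H_j$ are, in effect, the affine shadow of that construction, but without the bundle-theoretic gluing the passage from the immersion of $\C^{n+1}$ to a Fubini--Study compactification is not established.
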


This theorem draws us with  a strong analogy with the duality theory between  Hermitian  symmetric spaces.
Moreover, the existence of such a compactification   offers a bridge between the complex geometry of a given CH domain and the algebraic geometry of the  Fubini-Study
compactification of its \K\ dual.

\begin{remark}\rm
One natural metric to  consider on a  CH domain is, of course, the Bergman one (see \cite{ROOS2004bergman}). We can prove that although the Bergman metric on  CH domains admits a \K\ dual, 
the latter is not finitely projectively induced. This and other properties of the Bergman metric  on CH domains  and its \K\ dual will be explored  in a forthcoming paper.  
\end{remark}

A natural extension of the concept of Hermitian symmetric space of compact type is that of a  flag manifold $(F, g_F)$, i.e., compact and simply-connected homogeneous 
complex manifold $F$ equipped with a homogeneous \K\ metric $g_F$. Note that if the \K\ form $\omega_F$ associated to $g_F$ is integral 
then $g_F$ is finitely projectively induced (cf. \cite{TAKEUCHI1978hom} and \cite{DISCALLOIISHI2012Kimm}).

Moreover,
by restricting to the case of a  flag manifold of classical type, 
for  each $p\in F$ there exists a holomorphic embedding
with dense image $J_F:\C^n\rightarrow F$, $J_F(0)=p$.
This embedding is analogous to the embedding $J$ given by 
\eqref{embeddings} for a Hermitian symmetric space of compact type
(see  Section \ref{FLAG} below for the definition of $J_F$ and more details).

Thus a flag manifold of classical type $(F, g_F)$ of complex dimension $n$ and integral $\omega_F$ is a Fubini-Study compactification
of $(\C^n, J_F^*g_F)$. Consequently,  it is natural to investigate whether the latter admits  a \K\ dual.
In  the following theorem,
which represents our final main result,
we prove that this is the case only when the flag is indeed a Hermitian symmetric space of compact type.

\begin{theor}\label{mainteor3}
Let $(F, g_F)$ be a flag manifold of classical type.
If $(\C^n, J_F^*g_F)$ admits a \K\ dual,
then $(F, g_F)$
is biholomorphically isometric to 
a Hermitian symmetric space of compact type.
\end{theor}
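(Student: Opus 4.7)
The plan is to make explicit the Calabi diastasis $D_0^{J_F^*g_F}$ using the Borel-Weil embedding of $F$, then read off from its algebraic form exactly when a \K\ dual can exist.

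First, write $F=G/P$ with $G$ a classical simple group and polarize it by $\omega_F=\sum_\alpha m_\alpha\omega_\alpha$ with $m_\alpha\in\Z^+$, the $\omega_\alpha$ being the fundamental integral classes indexed by the simple roots removed by $P$. The Borel-Weil embedding realizes $F$ inside a product of projective spaces $\prod_\alpha\Pro(V_\alpha^*)$, and the affine chart $J_F\colon\C^n\to F$ is the open Schubert cell obtained from the Gauss decomposition of $G^{\C}$. Evaluating the normalized squared norm of the canonical section of each line bundle $L_\alpha\to F$ in Gauss coordinates gives
\begin{equation*}
D_0^{J_F^*g_F}(z,\bar z)=\sum_\alpha m_\alpha\log P_\alpha(z,\bar z),
\end{equation*}
where each $P_\alpha$ is a real polynomial with $P_\alpha(0,0)=1$, positive on $\C^n$, expressible as a determinantal or Pfaffian quantity in the Gauss coordinates.

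Second, I would translate the hypothesis via Definition \ref{dualcomplexdom}: a \K\ dual exists iff $\Psi(z,\bar z):=-\sum_\alpha m_\alpha\log P_\alpha(z,-\bar z)$ is the Calabi diastasis at $0$ of an exact domain $(U^*,g^*)\subset\C^n$. This demands (a) $\Psi$ is real on an open neighborhood of $0$, (b) each $P_\alpha(z,-\bar z)$ is positive there, and (c) $\partial\bar\partial\Psi$ is positive definite there. Since $\log$ is transcendental and the $P_\alpha$ are algebraically independent (their zero loci in $\C^n\times\C^n$ are distinct irreducible hypersurfaces coming from independent generators of $\mathrm{Pic}(F)$), condition (a) forces each $P_\alpha(z,-\bar z)$ to be real on the diagonal; equivalently, the Taylor coefficients $c^\alpha_{IJ}$ of $P_\alpha(z,\bar z)=\sum c^\alpha_{IJ}z^I\bar z^J$ vanish whenever $|I|\not\equiv|J|\pmod 2$.

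Third, I would conclude by showing that this parity condition selects the Hermitian symmetric case. For a classical parabolic, the parity condition holds iff the nilpotent radical $\mathfrak n_-$ of $P^-$ is abelian: if $\mathfrak n_-$ is abelian, the Gauss parametrization is linear in $z$ and each $P_\alpha$ is bi-graded with $|I|=|J|$; if $\mathfrak n_-$ is nonabelian, the Gauss map has quadratic or higher terms in $z$ and $P_\alpha$ acquires monomials with $|I|\neq|J|$ (a transparent example already occurs for $\mathrm{Fl}(1,2;3)$, where one factor is $P=1+|z_3|^2+|z_1z_3-z_2|^2$ and the cross term $z_1z_3\bar z_2$ violates parity). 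Since abelianness of $\mathfrak n_-$ characterizes Hermitian symmetric $G/P$, and (c) then forces $P$ to be a single cominuscule maximal parabolic with $g_F$ an integer multiple of the symmetric metric, $(F,g_F)$ is Hermitian symmetric of compact type. The main obstacle is the algebraic-independence step inside (a): a rigorous proof that the imaginary parts of distinct $\log P_\alpha(z,-\bar z)$ cannot cancel requires a careful inspection of the Picard group and the polarized Borel-Weil geometry, and would be the technical heart of the argument.
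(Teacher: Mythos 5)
Your overall strategy coincides with the paper's: reduce the existence of a \K\ dual to a parity condition on the Taylor coefficients of $D_0^{J_F^*g_F}$ (your condition $|I|\equiv|J|\pmod 2$ whenever the coefficient is nonzero is exactly the ``no forbidden monomials'' criterion of Lemma \ref{dualiffnotforb}), and then show nonsymmetric classical flags violate it. The gap is in the decisive step. You assert that the parity condition holds iff the nilradical $\mathfrak{n}_-$ is abelian, supported by one example and the heuristic that nonlinear terms of the Gauss parametrization ``acquire'' odd-parity monomials, and you yourself flag the non-cancellation issue as an unproven ``technical heart.'' That is where essentially all the work lies, and it cannot be dispatched by algebraic independence of the $P_\alpha$: reality of $\sum_\alpha m_\alpha\log P_\alpha(z,-\bar z)$ does not force each summand to be free of odd-parity coefficients, and cancellation between summands genuinely occurs. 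For instance, for the full flag of $SU(3)$ with $c_1=c_2$ the two admissible minors carry the forbidden type-$(1,2)$ terms $\pm\tfrac12 z_2\bar z_1\bar z_3$ with opposite signs, which cancel in the sum --- this is precisely why case (ii) of Lemma \ref{LMZ} is a Bochner case --- yet the flag is not symmetric, so your criterion applied naively at lowest order gives the wrong answer there. Cancellation can also occur inside a single determinantal $\Delta_r$, since the determinant expansion produces odd-parity monomials with both signs.

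The paper closes this gap in two stages. Lemma \ref{LMZ} (quoted from \cite{LOIMOSSAZUDDAS2019bochner}) already yields surviving forbidden monomials of type $(1,2)$ except in three exceptional families; two of those are handled directly from the explicit form \eqref{delta12} of the diastasis. In the remaining family (one black node, nonsymmetric) the type-$(1,2)$ obstruction is absent, so one must pass to type $(2,3)$: Lemma \ref{lemmaforbidden} uses $Z^3=0$ and the block structure of $Z$ to classify exactly which $(2,3)$-monomials, namely \eqref{forb1} and \eqref{forb2}, can occur in $\Delta_r$, and a case-by-case inspection of the painted diagrams for $Sp(n)$, $SO(2n+1)$, $SO(2n)$ exhibits the explicit monomial \eqref{monomial3cases} and verifies it does not cancel against any other admissible term. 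Your proposal contains no substitute for this computation, so the implication ``nonabelian nilradical $\Rightarrow$ a forbidden monomial survives'' remains unproven and the theorem does not yet follow from your argument.
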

It is worth pointing out that in Theorem \ref{mainteor3} we are not assuming any of the conditions (a) and  (b) of Question A. 
Therefore,  this theorem provide a characterization of Hermitian symmetric spaces of compact type among classical  flag manifolds solely  in terms
of \K\ duality.

\vskip 0.3cm
 
 The paper is organized as follows. In Sections \ref{secproof1} and \ref{secproof2},
we  describe  the values of $\alpha$ and $\mu$ for which  the metrics 
 $\alpha g^*_{\Omega, \mu}$, 
 $\alpha \hat g_{\Omega, \mu}$ and 
 $\alpha\hat g^*_{\Omega, \mu}$  
 are projectively induced (see Propositions \ref{propproj}, \ref{propmod} and \ref{propalphamu}, respectively). We also prove Theorems \ref{mainteor}
 and \ref{mainteor2}.
In the final  part  of Section \ref{secproof2}
we  show that the homogeneity assumption in  Theorem \ref{mainteor3} cannot be weakened to cohomogeneity  $1$ by showing that  the Fubini-Study compactification of  $(\C^{n+1}, \hat g^*_{\Omega, \mu})$ given in Theorem \ref{mainteor2}, 
is a cohomogeneity $1$ $G$-space.

In Section \ref{FLAG}, after recalling the construction of the embedding 
$J_F:\C^n\rightarrow F$ for a flag manifold of classical type and the definition of Alekseevsky-Peremolov coordinates,
we prove Theorem \ref{mainteor3}. 
In Section \ref{conjecture} we introduce a conjecture (Conjecture A) asserting that Question A has a positive answer if the metric $g$ is \K-Einstein.
We  also analyze the necessity of conditions (a), (b) and the KE assumption in Conjecture A.  
Specifically, we show the necessity of assumption (b) in Example \ref{HIDEYUKI}, examine  the role of assumption (a) by comparing it with  other long-standing  conjectures 
within the context of flag manifolds,  and show the necessity of the KE assumption  by proving that 
the metric $\hat g_{\Omega, \mu}$ is never KE for any value of $\mu$.
 In the last section (Section \ref{finrmk}) we show how 
our results on CH domains could be  extended to generalized CH domains.

\section{The \K\ dual of $(M_{\Omega, \mu}, g_{\Omega, \mu})$ and the proof of Theorem \ref{mainteor}}\label{secproof1}
Consider the \K\ dual  of a CH domain $(M_{\Omega, \mu}, g_{\Omega, \mu})$ 
namely the \K\ manifold $(M^*_{\Omega, \mu}, g^*_{\Omega, \mu})$, with associated \K\ form given by 
\begin{equation}\label{eqdualKob}
\omega^*_{\Omega, \mu}=\frac{i}{2\pi}\de\deb \log \left(N_{\Omega}^\mu(z, -\bar{z})+|w|^2\right).
\end{equation}
These domains were introduced in \cite{MOSSAZEDDAS2022symplch} where symplectic aspects were studied in analogy with those of 
\cite{DISCALALOI2008sympdual}.
In particular,  one can prove (see \cite[Section 3]{MOSSAZEDDAS2022symplch}) 
that $M^*_{\Omega, \mu}=\C^{n+1}$ and it can be easily  verified  that
$(M^*_{\Omega, \mu}, g^*_{\Omega, \mu})$
 is indeed the \K\ dual since $N_{\Omega}^\mu(z, -\bar{z})$
is real valued.

In the proof of  Theorem \ref{mainteor}
we need the following Propositions \ref{propproj} and \ref{ioemiki} and  Lemma \ref{lemmamain}. 

First we recall some basic facts. 
Throughout this paper we say that a \K\ metric $g$ on a complex manifold $M$ is {\em projectively induced} if  there exists a holomorphic map $f:M \to \C P^N$, $N \in \N \cup \left\{\infty\right\}$, such that $f^*g_{FS}=g$, where $g_{FS}$ is the Fubini-Study metric on $\C P^N$.
Such a map is called a {\em holomorphic isometry}.
Notice that we are not assuming either  $M$ compact or complete.
A \K\ metric  $g$ is said to be {\em finitely projectively  induced} if $N$ can be chosen finite, and  {\em infinitely projectively induced} if $N=\infty$ and the immersion is full, meaning its image is not contained within any finite dimensional complex projective space. For more on 
projectively induced \K\ metrics, 
refer to \cite{CALABI1953diast} and \cite{LoiZeddaBook2018diast}.

\begin{prop}\label{propproj}
The \K\ metric  $\alpha g^*_{\Omega, \mu}$ on $\C^{n+1}$
 is finitely projectively induced if and only if $\alpha, \mu  \in \Z^+$.  
\end{prop}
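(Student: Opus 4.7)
The plan is to invoke Calabi's criterion for projective inducibility. A short verification (using $N_\Omega(z,0)\equiv 1$ on a Cartan domain) shows that the Kähler potential $\alpha\log(N_\Omega^\mu(z,-\bar z)+|w|^2)$ of $\alpha g^*_{\Omega,\mu}$ coincides with the diastasis $D_0^{\alpha g^*_{\Omega,\mu}}$ at the origin. Hence $\alpha g^*_{\Omega,\mu}$ is finitely projectively induced if and only if
\[
(N_\Omega^\mu(z,-\bar z)+|w|^2)^\alpha \;=\; \sum_{j=0}^N |f_j(z,w)|^2
\]
for finitely many entire functions $f_j$ on $\C^{n+1}$.

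For the \emph{if} direction, assume $\alpha,\mu\in\Z^+$. Theorem B gives that $m\,g_{\Omega_c}$ is projectively induced for every $m\in\Z^+$; pulling back via the embedding $J$ in \eqref{embeddings} (whose potential for $J^*g_{\Omega_c}$ is $\log N_\Omega(z,-\bar z)$) yields finite decompositions $N_\Omega(z,-\bar z)^m = \sum_i |h_i^{(m)}(z)|^2$ with $h_i^{(m)}$ holomorphic polynomials. Expanding
\[
(N_\Omega^\mu(z,-\bar z)+|w|^2)^\alpha \;=\; \sum_{k=0}^\alpha \binom{\alpha}{k}\,N_\Omega(z,-\bar z)^{\mu(\alpha-k)}\,|w|^{2k},
\]
in which each exponent $\mu(\alpha-k)$ is a nonnegative integer, and substituting the preceding decompositions produces the required finite writing $\sum_{j,k}\bigl|\sqrt{\binom{\alpha}{k}}\,h_j^{(\mu(\alpha-k))}(z)\,w^k\bigr|^2$.

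For the \emph{only if} direction, assume a holomorphic isometry $F\colon \C^{n+1}\to \C P^N$ exists. Restricting $F$ to the $w$-axis $\{z=0\}$ yields an isometric immersion of $(\C,\alpha g_{FS})$ into $\C P^N$, and Calabi's classification of projectively induced multiples of $g_{FS}$ on $\C P^1$ forces $\alpha\in\Z^+$. Next, fix a primitive idempotent $e_1$ in the Jordan triple associated with $\Omega$ so that $N_\Omega(\zeta e_1,\bar\zeta e_1)=1-|\zeta|^2$, and restrict $F$ to the two-dimensional slice $\{(\zeta e_1,w):\zeta,w\in\C\}$. Writing $f_j(\zeta,w)=\sum_k f_j^{(k)}(\zeta)w^k$ and matching coefficients of $w^k\bar w^l$ in $((1+|\zeta|^2)^\mu+|w|^2)^\alpha=\sum_j|f_j(\zeta,w)|^2$, the cross terms vanish and the diagonal ones give
\[
\binom{\alpha}{k}(1+|\zeta|^2)^{\mu(\alpha-k)} \;=\; \sum_j |f_j^{(k)}(\zeta)|^2, \qquad k=0,\dots,\alpha.
\]
Specialising to $k=\alpha-1$ exhibits $(1+|\zeta|^2)^\mu$ as a finite sum of squared moduli of entire functions on $\C$. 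Expanding each as $\sum_n a_{j,n}\zeta^n$ and comparing with $(1+|\zeta|^2)^\mu=\sum_n\binom{\mu}{n}|\zeta|^{2n}$, the Gram matrix $\bigl(\sum_j a_{j,m}\bar a_{j,n}\bigr)$ must be diagonal and proportional to $\diag\bigl(\binom{\mu}{n}\bigr)$: positive semi-definiteness forces $\binom{\mu}{n}\geq 0$ for every $n$, finite rank forces $\binom{\mu}{n}=0$ for almost all $n$, and these two conditions jointly characterise $\mu\in\Z^+$.

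The main obstacle is precisely this last step for $\mu$. Restricting to $\{z=0\}$ immediately pins down $\alpha\in\Z^+$, while restricting to a totally geodesic disc inside the base of $M^*_{\Omega,\mu}$ alone only delivers the weaker condition $\alpha\mu\in\Z^+$. The decisive move is to work on the mixed two-variable slice $\{(\zeta e_1,w)\}$ and extract, from the coefficient of $|w|^{2(\alpha-1)}$ in the binomial expansion, a finite projective-induction identity for the single function $(1+|\zeta|^2)^\mu$; the Taylor/Gram-matrix computation then closes the gap and forces $\mu\in\Z^+$.
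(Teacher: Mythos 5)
Your proof is correct, and in the ``only if'' direction it takes a genuinely different and more economical route than the paper's. The ``if'' direction is, up to packaging, the same argument: the paper builds the embedding \eqref{eqembdcarp} from sections of $L^{\otimes\mu}$ and then composes with Calabi's Veronese-type map $V_\alpha$, which amounts exactly to your multinomial expansion of $(N_\Omega^\mu(z,-\bar z)+|w|^2)^\alpha$ into a finite sum of squared moduli; and both proofs obtain $\alpha\in\Z^+$ by restricting to the $w$-axis. The divergence is in how $\mu\in\Z^+$ is forced. The paper first restricts to $\{w=0\}$, which only yields $\alpha\mu\in\Z^+$, writes $\mu=a/b$, $\alpha=bk$, and then passes to the affine slice $\{w=1\}$, where it must carry out a delicate asymptotic analysis of the Taylor coefficients $A_h$ of $\left(\left(1+|\xi|^2\right)^{a/b}+1\right)^{bk}$ (comparing $B_{h,p}$ with $B_{h,1}$ via a divergent series) to exhibit infinitely many negative coefficients before invoking Calabi's criterium. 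You instead exploit the fact that, once $\alpha\in\Z^+$, the left-hand side is a polynomial of degree $\alpha$ in $|w|^2$, so extracting the coefficient of $|w|^{2(\alpha-1)}$ isolates $\alpha(1+|\zeta|^2)^\mu$ as a finite sum of squared moduli of entire functions of $\zeta$; the diagonal Gram-matrix argument on $\diag\bigl(\binom{\mu}{n}\bigr)$ then gives $\mu\in\Z^+$ immediately, this being nothing but the $\C P^1$ case of Theorem B. Your route is shorter and avoids the asymptotics entirely; note, however, that the paper's $\{w=1\}$ computation is reused almost verbatim in Proposition \ref{propalphamu} for $\hat g^*_{\Omega,\mu}$, so the heavier machinery is not wasted there. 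Two cosmetic points: the element $e_1$ should be called a (primitive) tripotent rather than an idempotent, and either one of your two conditions on $\binom{\mu}{n}$ --- nonnegativity for all $n$, or vanishing for all but finitely many $n$ --- already forces $\mu\in\Z^+$ on its own.
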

 \begin{proof}
 Let $(\Omega_c, g_{\Omega_c})$ be the compact dual of  the Cartan domain $(\Omega, g_\Omega)$.
We know that the  \K\ metric $g_{\Omega_c}$ is finitely  projectively induced via the Borel--Weil embedding
$BW: \Omega_c \rightarrow \mathbb{C} P^{d}, \ BW(p)=\left[s_0(p), \cdots  ,s_{d}(p)\right]$, i.e.
$BW^*g_{FS}=g_{\Omega_c}$
for a suitable $d=\dim H^{0}(L)-1$ depending on  $\Omega_c$ (cf. \eqref{embeddings}).
Here  $s_0, \dots ,s_d$ are global holomorphic sections of the 
holomorphic line bundle $L$ over $\Omega_c$ such that $c_1(L)=[\omega_{\Omega_c}]$. 
If  $\mu\in \Z^+$ then $\mu g_{\Omega_c}$ is finitely  projectively induced, i.e. there exists a holomorphic embedding
\begin{equation}\label{eqembdcarmeno}
F_{\mu}:\Omega_c\rightarrow \C P^{d_\mu}, \ F_{\mu}(p)=\left[s^{(\mu)}_0(p), \cdots  ,s^{(\mu)}_{d_\mu}(p)\right],
\end{equation}
such that $F_{\mu}^*g_{FS}=\mu g_{\Omega_c}$, 
where $s^{(\mu)}_0, \dots , s^{(\mu)}_{d_\mu}$
is a basis of  global holomorphic sections of $L^{\otimes\mu}$
and $d_\mu+1=\dim H^{0}(L^{\otimes\mu})$.
Moreover, 
we can also assume  $s^{(\mu)}_0(J(z)) \neq 0$, for all $z \in \mathbb{C}^n$ where $J:\C^n\rightarrow\Omega_c$ is given by \eqref{embeddings}.
The existence of $F_\mu$ with these properties 
follows by the fact that $L^{\otimes\mu}$ defines a regular quantization of the homogeneous \K\ manifold $(\Omega_c, g_{\Omega_c})$
(the reader is referred to  \cite[Th. 5.1]{AREZZOLOI2003quant}
for details).

 One then  easily deduces that the map
\begin{equation}\label{eqembdcarp}
\tilde{F}_\mu: \mathbb{C}^{n+1} \rightarrow \mathbb{C} P^{d_{\mu}+1}, \ (z, w) \mapsto\left[s^{(\mu)}_0(J(z)), \ldots, s^{(\mu)}_{d_\mu}(J(z)), ws^{(\mu)}_0(J(z)) \right],
\end{equation}
is a holomorphic isometric embedding of $\left(\C^{n+1}, g^*_{\W,\mu}\right)$ into $\left(\C P^{d_\mu+1}, g_{FS}\right)$.
If also  $\alpha$ is  a positive integer we get that 
$\alpha g^*_{\W,\mu}$ is finitely  projectively induced by the holomorphic map 
$V_\alpha\circ\tilde F_\mu$, where $V_\alpha : \C P^{d_\mu+1}\rightarrow \C P^{{\left({d_\mu+1+\alpha\atop d_{\mu}+1}\right)}}$
is Calabi's map (see \cite[Th.13]{CALABI1953diast}), namely a suitable normalization
of the Veronese map.

Conversely, assume that $\alpha g^*_{\Omega, \mu}$ is projectively induced.
Then the same will be true for the metric 
$\alpha g^*_{\C H^1,\mu}$ on  $\C^2$.
Indeed, by the hereditary property of the  Calabi's diastasis function  (see \cite[Prop.6]{CALABI1953diast}), 
we get that 
$$D_0^{g^*_{\Omega, \mu}}=\log \left(N^\mu_\Omega (z, -\bar z)+|w|^2\right)$$ on $\C^{n+1}$
restricts to $D_0^{g^*_{\C P^1, \mu}}=\log \left(\left(1+|\xi|^2\right)^\mu+|w|^2\right)$ on $\C^2$, where we choose a 
$(\C P^1, \mu g_{FS})$ complex and totally  geodesic embedded into $\Omega_c$.

Take now $\left\{\left(\xi,w\right)\in\C^2\mid \xi=0\right\}$ and  $\left\{\left(\xi,w\right)\in\C^2\mid w=0\right\}$, equipped with the 
\K\ metrics induced by $\alpha g^*_{\C H^1,\mu}$. It is immediate to see that they are  nothing but  $(\C , \alpha g_{FS})$ and  $(\C , \alpha \mu g_{FS})$ respectively, where $g_{FS}$ 
is the Fubini-Study metric of $\C P^1$ restricted to $\C=\{Z_0\neq 0\}\subset \C P^1$.
Thus,  the assumption that $\alpha g^*_{\Omega, \mu}$  (and hence $\alpha g^*_{\C H^1, \mu}$) is projectively induced implies
that both $\alpha g_{FS}$ and $\alpha \mu\hs g_{FS}$ are projectively induced.
This forces  $\alpha \in \Z^+$ and $\alpha \mu \in \Z^+$ and we can then write
$$\mu = \frac{a}{b} \quad \text{and} \quad \alpha=bk,$$
for some $a,b,k\in \Z^+$ and  $a,b$ coprime. Consider now  $\left\{\left(\xi,w\right)\in\C^2\mid w=1\right\}$ equipped with  the \K\ metric $g$ induced  by $\alpha g^*_{\C H^1, \mu}$.
The proof  will be completed if we show that $g$ is not projectively induced if $\frac{a}{b}\not \in \Z^+$. 
Notice that the \K\ form $\omega$ associated to $g$ is 
$$\w =\frac{i}{2\pi}\de\deb \Phi,\qquad \Phi (\xi)=bk\log\left(\left(1+|\xi|^2\right)^\frac{a}{b}+1\right).$$
It is immediate to  verify that $D(\xi)=\Phi(\xi)-bk\log 2$,  is the diastasis function for the metric $g$ centred at the origin.
Moreover, by using the radiality of   $D$ and  Calabi's criterium (see \cite{CALABI1953diast} or  \cite{LoiZeddaBook2018diast}), in order to show that $g$ is not projectively induced
 one needs to show that at least  one  the coefficients of the Taylor expansion  in $\xi$ and $\bar \xi$ around the origin
of $e^D-1$  is negative.  We can see that this is equivalent to show that the Taylor expansion of $\Psi(|\xi|^2)=\left(\left(1+|\xi|^2\right)^\frac{a}{b}+1\right)^{bk}$ in $|\xi|^2$, around the origin, has negative coefficients when $\frac{a}{b}\not \in \Z^+$.

This expansion reads as 
\begin{equation}\label{eqprlemp}\begin{split} 
\Psi(\left|\xi\right|^2)&= \sum_{p=0}^{kb}\binom{kb}{p}\left(1+\left|\xi\right|^2\right)^\frac{pa}{b}\\
 & =
 \sum_{p=0}^{kb}\binom{kb}{p}\left[1+\frac{pa}{b}\left|\xi\right|^2+\frac{pa}{2b}
 \left(\frac{pa}{b}-1\right)|\xi|^4+\frac{pa}{6b}\left(\frac{pa}{b}-1\right)\left(\frac{pa}{b}-2\right)|\xi|^6\right.\\
 & \left.+\dots +\frac{pa}{h!b}\left(\frac{pa}{b}-1\right)\cdots\left(\frac{pa}{b}-(h-1)\right)|\xi|^{2h}+\dots\right]\\
 &=\sum_{h=0}^\infty \frac{A_h}{h!}|\xi|^{2h},
\end{split}\end{equation}
where $A_0= \sum_{p=0}^{kb}\binom{kb}{p}$
 and
 \begin{equation}\label{eqahseq}
 \quad A_{h}=\sum_{p=0}^{kb} B_{h,p}, \quad B_{h,p}=\binom{kb}{p}\frac{pa}{b} \left(\frac{pa}{b}-1\right)\cdots\left(\frac{pa}{b}- (h-1)\right), \quad h>0.
 \end{equation}

Hence we have

$$
\frac{B_{\ell,p}}{B_{\ell,1}}=\binom{kb}{p}\frac{p}{bk}
\prod_{s=1}^{\ell-1}\frac{bs-ap}{bs-a}
=\binom{kb}{p}\frac{p}{bk}
\prod_{s=1}^{\ell-1}\left(1-\frac{a\left(p-1\right)}{bs-a}\right), \forall \ell.
$$
Since 
$
\sum_{s=1}^{\infty}\frac{a\left(p-1\right)}{bs-a}=+\infty,
$
we conclude that (see e.g. \cite[Section 3.7]{KnoppBook1956SequencesAndSeries})
$$
\lim_{\ell\to \infty} \frac{B_{\ell,p}}{B_{\ell,1}}=0, \qquad p=2,3,\dots,kb
$$
and therefore that 
\begin{equation}\label{eqlimaob}
\lim_{h\to \infty} \frac{A_h}{B_{h,1}}=1.
\end{equation}
Since we are assuming that $\frac{a}{b}\not\in\Z^+$, from \eqref{eqahseq} we get 
$$
B_{h+1,1}=ka\left(\frac{a}{b}-1\right)\dots\left(\frac{a}{b}-h\right)\neq 0,\  \forall h\in \Z^+.
$$
Clearly $\frac{a}{b}-h<0$, for any $h>\frac{a}{b}$. Hence, if $h_0$ is sufficiently large, from \eqref{eqlimaob}, we see that 
$\left\{A_h\right\}_{h>  h_0}$ is an alternate sequence.  We have  proved that the Taylor expansion \eqref{eqprlemp} has infinite negative coefficients when $\frac{a}{b}\not\in \Z$, as desired.
The proof is complete.
\end{proof}


\begin{remark}\rm\label{weakbCH}
Theorem \ref{mainteor} and Proposition \ref{propproj}  show that condition (b) in Question A
cannot be weakened to the condition that the metric $\alpha g^*$ is finitely projectively induced.
 \end{remark}

\begin{lem}\label{lemmamain}
Let $(M, g)$ be a \K\ manifold such that $g$ is projectively induced. Assume that there exists  $\alpha\in \Z^+$ such that $(M, \alpha g)$
admits a Fubini-Study compactification $(\tilde M, \alpha \tilde g)$. Then $(\tilde M, \tilde g)$
is a Fubini-Study compactification of $(M, g)$.
\end{lem}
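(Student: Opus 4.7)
The plan is to construct, directly, a holomorphic map $\tilde f\colon\tilde M\to\C P^{K}$ satisfying $\tilde f^{\,*}g_{FS}=\tilde g$, by factoring the given compactifying embedding through a Veronese map. Let $\phi\colon\tilde M\to\C P^{N'}$ be the finite-dimensional holomorphic isometric embedding with $\phi^{\,*}g_{FS}=\alpha\tilde g$, so that $\phi\circ J\colon M\to\C P^{N'}$ pulls $g_{FS}$ back to $\alpha g$. By the hypothesis, $g$ is projectively induced; fix a full holomorphic isometric immersion $f\colon M\to\C P^{K}$, a priori with $K\in\N\cup\{\infty\}$, realising $f^{\,*}g_{FS}=g$. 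Composing with the Veronese map $v_\alpha\colon\C P^{K}\to\C P^{K''}$, normalised so that $v_\alpha^{\,*}g_{FS}=\alpha g_{FS}$ (cf.\ \cite[Th.13]{CALABI1953diast}), produces a second holomorphic isometric immersion of $(M,\alpha g)$ into projective space.

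The next step is to invoke Calabi's rigidity theorem \cite[Th.9]{CALABI1953diast} for the pair of full holomorphic isometric immersions of $(M,\alpha g)$ obtained by restricting the two codomains to the respective projective spans of their images. Rigidity forces these two spans to have a common (minimal) dimension, which, being bounded by $N'$, must be finite; together with the fact that the projective dimension of the span of $v_\alpha\circ f(M)$ grows without bound as $K$ increases, this compels $K$ to be finite as well, so $g$ is in fact finitely projectively induced. Rigidity then provides a unitary transformation $U$ between the two finite-dimensional spans such that
\[
\phi\circ J\;=\;U\circ v_\alpha\circ f\qquad\text{on }M.
\]

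Since $K$ is finite, the Veronese image $v_\alpha(\C P^{K})$ is compact and hence closed in $\C P^{K''}$, and $v_\alpha$ is a biholomorphism onto this image. Exploiting that $J(M)$ is open and dense in $\tilde M$ and that $\phi$, $U^{-1}$ are continuous, one obtains
\[
U^{-1}\bigl(\phi(\tilde M)\bigr)\;=\;\overline{U^{-1}(\phi(J(M)))}\;=\;\overline{v_\alpha(f(M))}\;\subset\;v_\alpha(\C P^{K}),
\]
so that the map
\[
\tilde f\;:=\;v_\alpha^{-1}\circ U^{-1}\circ\phi\;\colon\;\tilde M\longrightarrow\C P^{K}
\]
is well defined and holomorphic. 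From $v_\alpha\circ\tilde f=U^{-1}\circ\phi$, the isometric nature of $U$, and $v_\alpha^{\,*}g_{FS}=\alpha g_{FS}$, one computes
\[
\alpha\,\tilde f^{\,*}g_{FS}\;=\;(v_\alpha\circ\tilde f)^{\,*}g_{FS}\;=\;(U^{-1}\circ\phi)^{\,*}g_{FS}\;=\;\phi^{\,*}g_{FS}\;=\;\alpha\tilde g,
\]
whence $\tilde f^{\,*}g_{FS}=\tilde g$, proving that $(\tilde M,\tilde g)$ is a Fubini-Study compactification of $(M,g)$.

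The main technical point to handle carefully is the application of Calabi's rigidity when $M$ is not assumed simply connected and when the two codomains $\C P^{K''}$ and $\C P^{N'}$ are of different (a priori even infinite) dimensions. Both issues are resolved by first replacing each codomain with the projective span of the corresponding image, which reduces matters to two full projective immersions, and then lifting to the universal cover of $M$; rigidity supplies a unitary intertwining the two lifted immersions, and since this unitary is automatically equivariant under deck transformations, the identity descends to $M$ itself.
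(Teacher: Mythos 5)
Your argument is correct and follows essentially the same route as the paper's proof: compose the immersion inducing $g$ with Calabi's map $V_\alpha$, invoke Calabi's rigidity to produce a unitary intertwining it with $\varphi_\alpha\circ J$, and then invert $V_\alpha$ on its (closed) image, using density of $J(M)$ in $\tilde M$, to obtain the desired map $\tilde M\to\C P^{K}$ pulling back $g_{FS}$ to $\tilde g$. The only difference is that you treat more explicitly two points the paper leaves implicit, namely that rigidity forces the a priori possibly infinite target of $f$ to be finite-dimensional and that the well-definedness of $v_\alpha^{-1}\circ U^{-1}\circ\phi$ on all of $\tilde M$ follows from compactness of $v_\alpha(\C P^{K})$.
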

\begin{proof}
By assumption there exist   holomorphic isometric embeddings  $\varphi :M\rightarrow \C P^{N}$, $J:(M, \alpha g)\rightarrow (\tilde M, \alpha\tilde g)$
and $\varphi_\alpha :\tilde M\rightarrow \C P^{s_\alpha}$ for some positive integers $N$ and $s_\alpha$. Let $V_\alpha: \C P^N\rightarrow \C P^{N_\alpha}$ be the Calabi's map (see (cf. \cite[Th.13]{CALABI1953diast})), i.e. $V_\alpha^*g_{FS}=\alpha g_{FS}$. Since without loss of generality we can assume $\varphi_\alpha$ to be full 
we can also assume $s_\alpha\leq N_\alpha$.
Then if $i: \C P^{s_\alpha}\rightarrow \C P^{N_\alpha}$ denotes  the natural totally geodesic inclusion of 
$\C P^{s_\alpha}$ into $\C P^{N_\alpha}$
it follows that  the maps  $V_\alpha\circ \varphi: M\rightarrow \C P^{N_\alpha}$ and
$i\circ {\varphi_{\alpha}}_{|M}\circ J:M\rightarrow \C P^{N_\alpha}$
are two holomorphic isometric immersions inducing the same \K\ metric
$\alpha g$. By Calabi's rigidity theorem \cite[Th.9 ]{CALABI1953diast} there exists a unitary transformation $U$ of $\C P^{N_\alpha}$ such that 
$V_\alpha\circ \varphi=U\circ i\circ {\varphi_{\alpha}}_{|M}\circ J$. 
Then the holomorphic map 
$$\tilde\varphi:=({V_\alpha}_{|V_\alpha(\C P^N)})^{-1}\circ U\circ i\circ \varphi_\alpha :\tilde M\rightarrow \C P^N$$
satisfies $\tilde\varphi^*g_{FS}=\tilde g$ and hence $(\tilde M, \tilde g)$ turns out to be a Fubini-Study compactification of $(M, g)$.
\end{proof}

The following proposition  describes  the projective inducibility of  multiples of the metric 
$g_{\Omega, \mu}$.

\begin{prop}(\cite[Th.2]{LOIZEDDA2011KEsubProjInf}\label{ioemiki})
Let $M_{\Omega, \mu}$ be a  CH domain. For  a real number $\alpha> 0$,  the \K\ metric $\alpha g_{\Omega, \mu}$ is
infinitely projectively induced  iff $(\alpha+s)\mu$ belongs to $W(\Omega)\setminus \{0\}$ for all integer $s\geq 0$.
\end{prop}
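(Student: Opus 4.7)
My plan is to reduce the infinite projective inducibility of $\alpha g_{\Omega,\mu}$ on $M_{\Omega,\mu}$ to that of the one-parameter family of metrics $\{(\alpha+s)\mu \cdot g_\Omega\}_{s\geq 0}$ on the base Cartan domain, and then invoke Theorem A termwise in $s$. The bridge between the two problems is the binomial expansion of $e^{D_0}$, where $D_0$ is Calabi's diastasis of $\alpha g_{\Omega,\mu}$ at the origin.

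The first step is the computation of $D_0$. Since $N_\Omega(0,\bar 0) = 1$, the potential given by \eqref{eqKobform} yields $D_0(z,w) = -\alpha\log(N_\Omega^\mu(z,\bar z) - |w|^2)$. Writing $e^{D_0} = N_\Omega^{-\alpha\mu}(1 - |w|^2 N_\Omega^{-\mu})^{-\alpha}$ and expanding by the binomial series in $|w|^2/N_\Omega^\mu$ gives
\begin{equation*}
e^{D_0(z,w)} \;=\; \sum_{s=0}^{\infty} \binom{\alpha+s-1}{s}\,|w|^{2s}\,N_\Omega^{-(\alpha+s)\mu}(z,\bar z),
\end{equation*}
where all numerical coefficients are strictly positive since $\alpha>0$. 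By Calabi's criterion, the metric $\alpha g_{\Omega,\mu}$ is infinitely projectively induced iff $e^{D_0}-1$ admits a convergent sum-of-squares resolution by holomorphic functions vanishing at the origin. The content of Theorem A, in its Faraut--Kor\'anyi reproducing-kernel formulation, states that $N_\Omega^{-\beta}(z,\bar z) = 1 + \sum_{k\geq 1} |p_k^{(\beta)}(z)|^2$ converges for suitable holomorphic polynomials $p_k^{(\beta)}$ on $\Omega$ iff $\beta \in W(\Omega)\setminus\{0\}$.

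For the ``if'' direction, suppose $(\alpha+s)\mu \in W(\Omega)\setminus\{0\}$ for every $s\geq 0$. Substituting the resolutions $N_\Omega^{-(\alpha+s)\mu}(z,\bar z) = \sum_{k\geq 0}|p_k^{(s)}(z)|^2$ into the binomial expansion produces
\begin{equation*}
e^{D_0(z,w)} \;=\; \sum_{s,k \geq 0}\Big|\sqrt{\textstyle\binom{\alpha+s-1}{s}}\, p_k^{(s)}(z)\, w^s\Big|^2,
\end{equation*}
which defines the desired infinite holomorphic isometric embedding of $(M_{\Omega,\mu}, \alpha g_{\Omega,\mu})$ into $\C P^\infty$. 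Conversely, if $(\alpha+s_0)\mu \notin W(\Omega)\setminus\{0\}$ for some $s_0$, then by Theorem A the Taylor expansion of $N_\Omega^{-(\alpha+s_0)\mu}(z,\bar z)$ admits a strictly negative coefficient in front of some monomial $z^A\bar z^B$. Because distinct powers $|w|^{2s}$ do not mix across the different terms of the binomial expansion, the coefficient of $z^A \bar z^B |w|^{2s_0}$ in $e^{D_0}$ equals $\binom{\alpha+s_0-1}{s_0}$ times that negative number, and hence is itself negative. The Taylor matrix of $e^{D_0}-1$ is therefore not positive semidefinite and Calabi's criterion fails.

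The main delicate point I expect is the reproducing-kernel description of $N_\Omega^{-\beta}$ on the Wallach set, which encapsulates representation-theoretic results of Faraut--Kor\'anyi and is hidden inside Theorem A; once this is granted, the argument at the level of $M_{\Omega,\mu}$ is purely the binomial identity above, together with the observation that the ``$w$-slices'' of the Taylor expansion decouple, which both drives the sum-of-squares decomposition in one direction and prevents cancellation of a negative coefficient in the other.
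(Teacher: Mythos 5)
Your argument is correct and is essentially the paper's own method: the paper only cites this statement from \cite{LOIZEDDA2011KEsubProjInf}, but its proof of the analogous Proposition \ref{propmod} is explicitly described as an adaptation of that proof, and your binomial expansion $e^{D_0}=\sum_{s\geq 0}\binom{\alpha+s-1}{s}|w|^{2s}N_\Omega^{-(\alpha+s)\mu}$ is precisely the content of the paper's derivative formula \eqref{induct} combined with the block decomposition of Calabi's matrix coming from the symmetries $z\mapsto e^{i\theta}z$, $w\mapsto e^{i\phi}w$. The one imprecision is in your converse: Theorem A and Calabi's criterion give that the Hermitian form of $N_\Omega^{-(\alpha+s_0)\mu}-1$ is not positive semidefinite, which does not literally mean that some monomial coefficient of $z^A\bar z^B$ is negative (nor would a negative off-diagonal coefficient alone imply failure of positivity); the correct and immediate conclusion from your decoupling observation is that the principal submatrix of the coefficient matrix of $e^{D_0}-1$ indexed by the monomials $z^Aw^{s_0}$ equals the positive multiple $\binom{\alpha+s_0-1}{s_0}$ of a non--positive-semidefinite matrix, so the full matrix is not positive semidefinite and Calabi's criterion fails as you claim.
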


\begin{proof}[Proof  of Theorem \ref{mainteor}]
By  combining Proposition \ref{propproj},  Proposition \ref{ioemiki} and the fact that the metric $g_{\Omega, \mu}$ is complete we deduce that 
the assumptions that 
$\mu\in \Z^+$,  and  $\alpha$ is an integer  sufficiently large  
are necessary and sufficient conditions for 
$(M_{\Omega, \mu}, \alpha g_{\Omega, \mu})$
to admit  a Fubini-Study completion (cf. Remark \ref{rmkcompl}),  and the dual \K\ metric 
$\alpha g_{\Omega, \mu}^*$  to be finitely projectively induced.

Notice  that if $M_{\Omega, \mu}$ is homogeneous then  $\Omega =\C H^n$, $\mu=1$ and hence $(M_{\Omega, \mu}=\C H^{n+1}, g_{\Omega, \mu}=g_{hyp})$
which admits a Fubini-Study compactification given by $(\C P^{n+1}, g_{FS})$. 
Hence, it remains to  prove that if one assumes 
that  $(M_{\Omega, \mu}, \alpha g^*_{\Omega, \mu})$ admits a Fubini-Study compactification then  $M_{\Omega, \mu}$ is homogeneous, i.e. $\Omega_c=\C P^n$.
By Lemma \ref{lemmamain} we can assume $\alpha =1$ and hence  there exists  a holomorphic isometry  $\Psi$  from $\C^{n+1}$ into a finite dimensional complex projective space inducing $g^*_{\Omega, \mu}$. Assume by contradiction that $\Omega_c\neq \C P^n$.
By Calabi's rigidity, up to a unitary transformations of the ambient projective space we can assume 
that $\Psi=\tilde {F}_{\mu}\circ J:\C^{n+1}\rightarrow \C P^{d_{\mu}+1}$ is given by the embedding \eqref{eqembdcarp}. 
Let now 
\begin{equation}\label{equazioniP}
P_j(X_0, \dots, X_{d_\mu}) = 0, \ \ j= 1, \dots, k, \ \deg P_j\geq 2,
\end{equation}
be the homogeneous polynomial equations which define the image $F_\mu(\Omega_c)$ of the embedding $F_\mu: \Omega_c\rightarrow \C P^{d_\mu}$ 
given by \eqref{eqembdcarmeno} (the condition  $\deg P_j\geq 2$ for all $j=1, \dots ,k$ comes from the fact that $F_\mu$ is a full embedding and $\Omega_c\neq \C P^n$).

We claim that
\begin{equation}\label{chiusura}
(\overline{\tilde F_{\mu}\circ J)(\C^{n+1})} = K,
\end{equation}
where
$$K:= \{ [X_0, \dots, X_{d_\mu}, X_{d_\mu+1}] \ | \ P_j(X_0, \dots, X_{d_\mu}) = 0, \ \ j= 1, \dots, k \  \}.$$

The inclusion  $(\overline{\tilde F_{\mu}\circ J)(\C^{n+1})} \subseteq K$ is immediate by construction.
In order to prove that $K\subseteq (\overline{\tilde F_{\mu}\circ J)(\C^{n+1})}$
let $Q =\left[X_0, \dots, X_{d_\mu}, X_{d_\mu+1}\right] \in K$.

We have two cases: on the one hand, if $X_0 = \cdots = X_{d_\mu} = 0$ (i.e. $Q = [0, \dots, 0, 1]$), then $Q \in (\overline{\tilde F_{\mu}\circ J)(\C^{n+1})}$ since it is the limit of any sequence 
$$(\tilde F_{\mu}\circ J) (z, w_j)=[s_0^{(\mu)}(J(z)), \dots, s_{d_\mu}^{(\mu)}(J(z)), w_j s_0^{(\mu)}(J(z)) ]$$ 
with $w_j \rightarrow + \infty $ and any (fixed) $z\in \C^{n+1}$.

On the other hand, let $(X_0, \dots, X_{d_\mu}) \neq (0, \dots, 0)$, say $X_k\neq 0$. 
Then $[X_0, \dots, X_{d_\mu}] \in F_\mu(\Omega_c)\subset\C P^{d_\mu}$, i.e. $\left[X_0, \dots, X_{d_\mu}\right] =\left[s^{(\mu)}_0(p), \cdots  ,s^{(\mu)}_{N_\mu}(p)\right]$ for some $p \in \Omega_c$. Since $J(\C^n)$ is dense in $\Omega_c$, there exists a sequence $z_j\in \C^{n}$ such that $p_j:=J(z_j) \rightarrow p$ and then  the sequence  
$$\tilde F_\mu \left(p_j, \frac{X_{d_\mu+1}}{s_k^{(\mu)}(p)}\right)=\left[s_0^{(\mu)}(p_j), \dots, s_{N_\mu}^{(\mu)}(p_j), s_k^{(\mu)}(p_j)\frac{X_{d_\mu+1}}{s_k^{(\mu)}(p)}\right]\rightarrow Q.$$ 
This shows that $Q=[X_0, \dots, X_{d_\mu}, X_{d_\mu+1}] \in (\overline{\tilde F_\mu\circ J)(\C^{n+1})}$ and proves the claim (\ref{chiusura}).

In light of the above, in order to end the proof  it will suffice to show that $K$ is not smooth unless $\Omega = \C H^n$ and $\mu = 1$.

In order to do that, notice that $[0, \dots, 0, 1] \in K$ and that, by the implicit function theorem, in order for the equations (\ref{equazioniP}) to define a smooth submanifold 
of $\C P^{d_{\mu}+1}$ in a neighbourhood of $(X_0, \dots, X_{d_\mu+1}) = (0, \dots, 0, 1)$ at least one of  the homogeneous polynomials $P_j$ must be linear in constrast with $\deg P_j\geq 2$,  for all $j=1, \dots ,k$.
\end{proof}

\section{The \K\ dual of $(M_{\Omega, \mu}, \hat g_{\Omega, \mu})$ and the proof of Theorem \ref{mainteor2}}\label{secproof2}
In the proof of Theorem \ref{mainteor2}
we need the following  proposition 
interesting on its  own sake.
\begin{prop}\label{propmod}
 Let $M_{\Omega, \mu}$ be a  CH domain. 
 Then the \K\ metric $\hat g_{\Omega, \mu}$
whose associated \K\ form is given by \eqref{modmetric}  is complete.
Futhermore, 
for  a real number $\alpha> 0$,  the \K\ metric 
$\alpha\hat g_{\Omega, \mu}$ is 
infinitely projectively induced  iff $(2\alpha+s)\mu$ belongs to $W(\Omega)\setminus \{0\}$ for all integer $s\geq 0$.
\end{prop}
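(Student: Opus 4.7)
\textbf{Proof proposal for Proposition \ref{propmod}.}
The plan is to recognize $\hat\omega_{\Omega,\mu}$ as a sum of two familiar $(1,1)$-forms, derive completeness from an inequality of Hermitian forms, and then reduce the projective inducibility question to an application of Calabi's criterion, in complete parallel with the proof of Proposition \ref{ioemiki}.

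\emph{Step 1 (reformulation and completeness).} From \eqref{defomom}, \eqref{eqKobform} and \eqref{modmetric} one reads off
$$\hat\omega_{\Omega,\mu}=\omega_{\Omega,\mu}+\mu\,\pi^*\omega_\Omega,$$
where $\pi\colon M_{\Omega,\mu}\to\Omega$ is the projection on the first factor. Since $\pi^*g_\Omega$ is a nonnegative Hermitian form, one has $\hat g_{\Omega,\mu}\ge g_{\Omega,\mu}$ as quadratic forms, hence distances satisfy $d_{\hat g_{\Omega,\mu}}\ge d_{g_{\Omega,\mu}}$. Therefore every Cauchy sequence in $(M_{\Omega,\mu},\hat g_{\Omega,\mu})$ is also Cauchy in $(M_{\Omega,\mu},g_{\Omega,\mu})$, and completeness of $\hat g_{\Omega,\mu}$ reduces to the (known) completeness of $g_{\Omega,\mu}$.

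\emph{Step 2 (diastasis and expansion).} The potential $-\log(N_\Omega^\mu-|w|^2)-\mu\log N_\Omega$ vanishes at the origin, so it coincides there with Calabi's diastasis. Scaling by $\alpha$, exponentiating and expanding (legitimate because $|w|^2<N_\Omega^\mu$ on $M_{\Omega,\mu}$),
$$e^{D_0^{\alpha\hat g_{\Omega,\mu}}}=N_\Omega^{-2\alpha\mu}\Bigl(1-\tfrac{|w|^2}{N_\Omega^\mu}\Bigr)^{-\alpha}=\sum_{s=0}^{\infty}\binom{\alpha+s-1}{s}|w|^{2s}\,N_\Omega^{-(2\alpha+s)\mu}.$$

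\emph{Step 3 (Calabi's criterion).} By Calabi's criterion (\cite{CALABI1953diast}, see also \cite{LoiZeddaBook2018diast}), $\alpha\hat g_{\Omega,\mu}$ is infinitely projectively induced iff the matrix of Taylor coefficients of $e^{D_0^{\alpha\hat g_{\Omega,\mu}}}-1$ indexed by monomials $z^Iw^s$ is positive semi-definite. Only diagonal $w$-indices $s=t$ appear in the expansion above (since $|w|^{2s}=w^s\bar w^s$), so this matrix is block-diagonal in $s$, with $s$-th block equal to the strictly positive multiple $\binom{\alpha+s-1}{s}$ of the Taylor matrix of $N_\Omega^{-(2\alpha+s)\mu}$. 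Positivity of every block is therefore equivalent to $(2\alpha+s)\mu\in W(\Omega)\setminus\{0\}$ for every integer $s\ge 0$, which is the defining property of the Wallach set recalled in \eqref{wallach}. The main obstacle, exactly as in Proposition \ref{ioemiki}, is the careful accounting of the block structure so as to pass from positivity of the single series to that of each coefficient of $|w|^{2s}$; once observed that the $w$-cross-terms vanish, this reduction is automatic.
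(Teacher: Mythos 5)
Your proof is correct and follows essentially the same route as the paper's: completeness comes from the same pointwise inequality $\hat g_{\Omega,\mu}\ge g_{\Omega,\mu}$ (the paper phrases it with divergent curves rather than Cauchy sequences), and the projective-inducibility part is the paper's argument with the block structure obtained more directly — your generalized binomial expansion of $(1-|w|^2/N_\Omega^\mu)^{-\alpha}$ packages in one line what the paper derives via the invariance under $z\mapsto e^{i\theta}z$, $w\mapsto e^{i\phi}w$ together with the derivative formula for radial functions. Two small fixes: the identification of the potential with the diastasis should be justified by noting it is a sum of two diastases and hence has no purely holomorphic or antiholomorphic terms (vanishing at the origin alone is not the characterizing property), and the equivalence between positivity of the Taylor matrix of $N_\Omega^{-(2\alpha+s)\mu}$ and $(2\alpha+s)\mu\in W(\Omega)\setminus\{0\}$ is Theorem A (via Calabi's criterion), not merely the description \eqref{wallach} of the Wallach set.
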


The second part of the  proposition can be considered the  analogous of Proposition \ref{ioemiki} above for the metric $g_{\Omega, \mu}$. Indeed its  proof is an adaptation of the proof given in \cite[Th.2]{LOIZEDDA2011KEsubProjInf}.

\begin{proof}[Proof  of Proposition \ref{propmod}]
Let $\gamma(t) = (z_1(t), \dots, z_n(t), w(t))$ be a curve in $M_{\Omega, \mu}$.
Then, it is easily seen  that 
\begin{equation}\label{normcurve}
\| \dot \gamma(t) \|_{\hat g_{\Omega, \mu}}^2 = \| \dot \gamma(t) \|_{g_{\Omega, 	\mu}}^2 + \| \dot{p(\gamma)}(t) \|_{\mu g_\Omega}^2
\end{equation}

where $p(\gamma)(t) = (z_1(t), \dots, z_n(t))$ denotes the projection of $\gamma$ on the base $\Omega$  equipped with the  metric $\mu g_{\Omega}$.

We deduce that the length of $\gamma$ with respect to $\hat g_{\Omega, \mu}$ is

\begin{equation}\label{comparIntegrals}
\int\| \dot \gamma(t) \|_{\hat g_{\Omega, \mu}}dt =\int
\sqrt{\| \dot \gamma(t) \|_{g_{\Omega, \mu}}^2 + \| \dot{p(\gamma)}(t) \|_{\mu g_\Omega}^2}dt\geq \int\| \dot \gamma(t) \|_{g_{\Omega, \mu}}dt 
\end{equation}

Now, we know (see e.g. \cite{DOCARMO1992}) that a Riemannian metric is complete if and only if the length of every {\em divergent curve} (this means that is gets out of every compact in $M_{\Omega, \mu}$) is not finite.
Let then $\gamma(t)$ be a divergent curve in $M_{\Omega, \mu}$.
Since the metric $g_{\Omega, \mu}$ is complete, then $\int \| \dot \gamma(t) \|_{g_{\Omega,\mu}} \ dt = \infty$. But then, from (\ref{comparIntegrals}) we deduce that also $\int \| \dot \gamma(t) \|_{\hat g_{\Omega, \mu}} \ dt = \infty$  and by the above criterium we deduce that also $\hat g_{\Omega,\mu}$ is complete.

In order to prove the second part of the proposition, notice that
a potential of the multiple $\alpha \hat g_{\Omega, \mu}$ is 

\begin{equation}\label{potentialmodified}
D = \log [N_\Omega^{- \mu \alpha} (N_\Omega^{\mu} - |w|^2)^{-\alpha} ].
\end{equation}
 In fact, it is easily seen that (\ref{potentialmodified}) is indeed  the diastasis centred at the origin
for the metric $\alpha s\hat g_{\Omega,\mu}$, i.e. $D=D_0^{\alpha\hat g_{\Omega,\mu}}$.
 In order to apply Calabi's criterium  (see \cite[Th.9]{CALABI1953diast}) 
 consider the expansion

\begin{equation}\label{CalabiExp}
e^D - 1 = \sum_{j ,k=0}^{\infty} B_{jk} (zw)^{m_j} (\bar z \bar w )^{m_k}, 
\end{equation}
where  $m_j = (m_{j,1}, \dots, m_{j, n}, m_{j, n+1})$ is a multiindex, $(zw)^{m_j} = z_1^{m_{j,1}} \cdots z_n^{m_{j, n}} w^{m_{j, n+1}}$ and the multiindices $m_j$ are ordered so that their norms $|m_j| = m_{j,1} +  \cdots + m_{j, n} + m_{j, n+1}$ satisfy $|m_j| \leq |m_{j+1}|$ and by using the lexicographic order when $|m_j| = |m_k|$. The coefficient $B_{jk}$ in (\ref{CalabiExp}) is clearly the partial derivative 

$$B_{jk} = \frac{1}{m_j! m_k!} \frac{\partial^{|m_j| + |m_k|}}{\partial(zw)^{m_j} (\bar z \bar w)^{m_k}} e^D$$
evaluated at the origin.

Now, by following the same outline of the proof in \cite{LOIZEDDA2011KEsubProjInf}, we first notice that $B_{jk}=0$ when the \lq\lq truncated norms'' $|m_j|_n = m_{j,1} +  \cdots + m_{j, n}$ and $|m_k|_n = m_{k,1} +  \cdots + m_{k, n}$ are different or when $m_{j, n+1} \neq m_{k, n+1}$.

Indeed, this follows from the fact that the map $z \mapsto e^{i \theta} z$ is an automorphism of the base domain $\Omega$. Then, since the diastasis $\log N_\Omega(z, \bar z)$ is invariant by automorphisms, we have $N_\Omega(e^{i \theta} z, e^{- i \theta} \bar z) = N_\Omega(z, \bar z)$. Similarly, since the diastasis (\ref{potentialmodified}) depends radially on $w$, also the map $w \mapsto e^{i \phi}w$ is an isometry.

Then, if in the expansion (\ref{CalabiExp}) of $e^D -1 = N_\Omega(z, \bar z)^{- \mu \alpha} [N_\Omega(z, \bar z)^{\mu} - |w|^2)]^{-\alpha}$  there is a monomial $(zw)^{m_j}(\bar z \bar w)^{m_k}$ (with $B_{jk} \neq 0$) we must have

$$(zw)^{m_j}(\bar z \bar w)^{m_k} = e^{i \theta (|m_j|_n - |m_k|_n)} e^{i \phi (m_{j, n+1} - m_{k, n+1})} (zw)^{m_j}(\bar z \bar w)^{m_k}$$

for every $\theta, \phi$, which implies the claim.

Since $B_{jk}\neq 0$ implies $m_{j,1} +  \cdots + m_{j, n} =  m_{k,1} +  \cdots + m_{k, n}$ and $m_{j, n+1} = m_{k, n+1}$, it follows that  $B_{jk}\neq 0$ implies $|m_j| = |m_k|$. Then, by the ordering chosen for the $m_j$'s, we deduce that the matrix $B = (B_{jk})$ has the following diagonal block structure:

$$
A = \left( \begin{array}{cccccc}
0 & 0 & 0& 0 & 0 & \\
0 & E_1 & 0 & 0 & 0 & \\
0 & 0 & E_2 & 0 & 0 & \\
0 & 0 & 0 & E_3 & 0 & \vdots \\
0 & 0 & 0 & 0 & E_4 & \\
 & & \dots & & & 
\end{array} \right)$$
where  $E_i$ contains the entries corresponding to derivatives with respect to multiindices $m_j$ having norm $|m_j| = i$.

In turn, up to rearranging the order of the $m_j$'s having norm $i$, we can assume that every block $E_i$ has the following diagonal block structure\footnote{Rearranging the order has the effect to apply to $E_i$ a permutation both to the rows and the columns, which in turn can be obtained by replacing $E_i$ with a congruent matrix ${}^T P E_i P$ (where $P$ is a permutation matrix), which is permitted since $E_i$ is positive definite if and only if ${}^T P E_i P$ is.} 

$$
E_i = \left( \begin{array}{ccccccc}
F_{z(i), w(0)} & 0 & 0& 0 & 0 & & 0\\
0 & F_{z(0), w(i)} & 0 & 0 & 0 & & 0\\
0 & 0 & F_{z(i-1), w(1)} & 0 & 0 & & 0\\
0 & 0 & 0 & F_{z(i-2), w(2)} & 0 & \dots & 0\\
 & & \dots & & & & 0\\
0 & 0 & 0 & 0 & 0 & & F_{z(1), w(i-1)}  \\
\end{array} \right)$$

where we are denoting by $F_{z(i-s), w(s)}$ ($s=0, \dots, i$) the block which  contains the entries $B_{jk}$ corresponding to the derivatives with respect to multiindices such that $|m_j|_n = |m_k|_n = i-s$ and $m_{j, n+1} = m_{k, n+1} = s$ i.e., in other words, such that there are $i-s$ derivatives with respect to $z$ and $\bar z$ and $s$ derivatives with respect to $w$ and $\bar w$ (recall that in order for $B_{jk}$ to be non zero we must have $|m_j|_n = |m_k|_n$ and $m_{j, n+1} = m_{k, n+1}$).

In order to prove that the matrix is positive definite, we just need to show that each of these blocks $F_{z(i-s), w(s)}$ (for any $i=1, 2, \dots$ and any $s=0, \dots, i$) is positive definite.

Let us begin from the block $F_{z(i), w(0)}$,  By definition, its entries are obtained by deriving the function $N_\Omega^{- \mu \alpha} (N_\Omega^{\mu} - |w|^2)^{-\alpha}$ $i$ times with respect to the $z$'s and $i$ times with respect to the $\bar z$'s (with no derivatives with respect to $w$ or $\bar w$). Since the derivatives are evaluated at the origin, one gets the same by applying the same derivatives to the function

$$N_\Omega^{- \mu\alpha} (N_\Omega^{\mu} )^{-\alpha}= N_\Omega^{-2 \alpha \mu} = e^{2 \alpha\mu D_0^{g_\Omega}}$$

where  $D_0^{g_\Omega}$
denotes the  diastasis function of $g_\Omega$ (cf. \eqref{defomom} above).
By Calabi's criterium and Theorem A  one gets that the blocks under consideration are positive definite if and only if 
\begin{equation}\label{condiz1}
2 \alpha\mu \in W(\Omega)\setminus \{0\}.
\end{equation}
Let us now consider the block $F_{z(0), w(i)}$. by definition, its entries are obtained by deriving the function $N_\Omega^{- \mu \alpha} (N_\Omega^{\mu} - |w|^2)^{-\alpha}$ $i$ times with respect to $w$ and $i$ times with respect to $\bar w$ (with no derivatives with respect to the $z$'s or the $\bar z$'s): since the derivatives are evaluated at the origin, by using the fact that $N_\Omega(0,0)=1$ one gets the same by applying the same derivatives to the function

$$(1 - |w|^2)^{-\alpha} = \left( \sum_{j=0}^{\infty} |w_j|^2 \right)^\alpha,$$

which shows that the block $F_{z(0), w(i)}$ is positive definite.

Finally, let us consider the block $F_{z(i-s), w(s)}$, which by definition consists of the entries  obtained by deriving the function $N_\Omega^{- \alpha\mu} (N_\Omega^{\mu} - |w|^2)^{-\alpha}$ $i-s$ times with respect to the $z$'s, $i-s$ times with respect to the $\bar z$'s, $s$ times with respect to $w$ and $s$ times with respect to $\bar w$.

We first notice that

\begin{equation}\label{induct}
\frac{\partial^{2s}}{\partial w^s \partial \bar w^s}|_{w=0} N_\Omega^{- \alpha\mu} (N_\Omega^{\mu} - |w|^2)^{-\alpha} = N_\Omega^{- \alpha\mu}  s! \alpha(\alpha+1) \cdots (\alpha + s-1) (N_\Omega^{\mu} - |w|^2)^{-\alpha - s}
\end{equation}

Indeed, this immediately follows from the general formula\footnote{One immediately sees this formula by comparing the Taylor expansions $f(x) = \sum_{k=0}^{\infty} \frac{f^{(k)(0)}}{k!} x^k$ (with $x = |w|^2$) and $f(w, \bar w) = \sum_{i, j=0}^{\infty} \frac{1}{i! j!} \frac{\partial^{i+j}}{\partial w^i \partial \bar w^j}|_{w=0} w^i \bar w^j$.} for radial functions $\frac{\partial^{2s}}{\partial w^s \partial \bar w^s}|_{w=0} f(|w|^2) = s! f^{(s)}(0)$ and the fact that 
$$\frac{d^s}{dx^s} (A - x)^{-\alpha} = \alpha(\alpha+1) \cdots (\alpha+s-1) (A - x)^{-\alpha-s}.$$

Then, since the derivatives are evaluated at the origin, we deduce that the entries of $F_{z(i-s), w(s)}$ can be equivalently be obtained by deriving the function 

$$N_\Omega^{- \alpha\mu }  s! \alpha(\alpha+1) \cdots (\alpha + s-1) (N_\Omega^{\mu} )^{-\alpha - s} = s! \alpha(\alpha+1) \cdots (\alpha + s-1) (N_\Omega^{\mu} )^{-2\alpha - s} $$

$i-s$ times with respect to the $z$'s and $i-s$ times with respect to the $\bar z$'s.

Since 
$$(N_\Omega^{\mu} )^{-2\alpha - s} = e^{(2\alpha + s)\mu D_0^{g_\Omega}}.$$ 
by applying again  Calabi's criterium, by Theorem A and taking into account also (\ref{condiz1}) we finally deduce that  $\alpha \hat g_{\Omega, \mu}$
is projectively induced if and only if 
$(2\alpha + s)\mu \in W(\Omega)\setminus \{0\}$
for every integer $s \geq 0$. 
This concludes the proof of the proposition.
\end{proof}


We are now in a position to prove Theorem \ref{mainteor2}.

\begin{proof}[Proof  of Theorem \ref{mainteor2}]
It is easily seen that  Calabi's diastasis function 
$D_0^{\hat g_{\Omega, \mu}}$ for the \K\  metric $\hat g_{\Omega, \mu}$ 
 (cf. \eqref{modmetric}) on the CH domain 
$M_{\Omega, \mu}\subset \C^{n+1}$ is given by 
$$D_0^{\hat g_{\Omega, \mu}}(z, \bar z)=-\log (N_\Omega(z, \bar z)^{\mu} - |w|^2) -\log N_\Omega^\mu(z, \bar z)$$
and that it   cannot be extended to an open subset of $\C^{n+1}$ strictly containing $M_{\Omega, \mu}$.
Thus,  the CH domain $(M_{\Omega, \mu}, \hat g_{\Omega, \mu})$
is an exact domain (cf. Definition \ref{exactdomain}).
Moreover, the dual  $(1, 1)$-form 
\begin{equation}\label{modmetricdual}
\hat \omega^*_{\Omega, \mu} =\frac{i}{2\pi} \partial \bar\partial \log (N_\Omega(z, -\bar z)^{\mu} + |w|^2) + \frac{i}{2\pi} \partial \bar\partial \log N_\Omega^\mu(z, -\bar z) 
\end{equation}
obtained by \eqref{modmetric} with the \lq\lq dual trick'' \eqref{diastasisstar} in Definition \ref{dualcomplexdom} turns out to be a \K\ form on the whole $\C^{n+1}$ and if $\hat g^*_{\Omega, \mu}$ denotes the associated \K\ metric and $D_0^{\hat g^*_{\Omega, \mu}}$ its Calabi's diastasis functions, one has
\begin{equation}\label{diastmodmetricdual}
D_0^{\hat g^*_{\Omega, \mu}}(z, \bar z)=
\log (N_\Omega(z, -\bar z)^{\mu} + |w|^2) + \log N_\Omega^\mu(z, -\bar z).
\end{equation}
Thus $(\C^{n+1}, \hat g^*_{\Omega, \mu})$ is the \K\ dual of $(M_{\Omega, \mu}, \hat g_{\Omega, \mu})$.
We are going to show that for all $\alpha, \mu\in \Z^+$,  the pair $(\C^{n+1}, \alpha\hat g^*_{\Omega, \mu})$ has  a Fubini-Study compactification.
Specifically, for  $\mu\in \Z^+$ we will  construct 
a compact \K\ manifold $(P_{\Omega_c, \mu}, g_{P, \mu})$, where $g_{P, \mu}$ is  (finitely) projectively induced, along with 
a holomorphic  embedding with dense image $\hat J:\C^{n+1}\rightarrow P_{\Omega_c, \mu}$ such that $\hat J^*g_{P, \mu}=\hat g^*_{\Omega, \mu}$. 
This will complete  the proof of Theorem \ref{mainteor2}. Indeed by choosing $\mu\in \Z^+$ and $\alpha\in\Z^+$ sufficiently large,
Proposition \ref{propmod}  and the structure of  $W(\Omega)$ ensure that   $(M_{\Omega, \mu}, \alpha\hat g_{\Omega, \mu})$ has a Fubini-Study completion  
(cf. Remark \ref{rmkcompl}). Furthermore,  $\alpha g_{P, \mu}$
will be projectively induced using Calabi's map (see \cite[Th.13]{CALABI1953diast}), and $(P_{\Omega_c, \mu}, \alpha g_{P, \mu})$ will serve as  the Fubini-Study compactification of $(\C^{n+1}, \alpha\hat g^*_{\Omega, \mu})$.

In order to do that we  simplify the notation as follows.
Fix $\mu >0$ and set  $F=F_\mu$ for the map given by 
\eqref{eqembdcarmeno}.  We also  set   $N=d_\mu$, 
and we denote by $F_k, k=0, \dots ,N$, the sections  $s_k^{(\mu)}$ appearing as the components of  the map $F_\mu$.

Ultimately, we have an embedding
\begin{equation}\label{newF}
F:\Omega_c\rightarrow \C P^N, p\mapsto [F_0(p), \dots , F_N(p)]
\end{equation}
such that $F^*g_{FS}=\mu g_{\Omega_c}$ and $F_0(J(z))\neq 0$, for all $z\in \C^n$, where $J: \C^n\rightarrow \Omega_c$ is the embedding given by \eqref{embeddings}.
Moreover,  if  $U_k = \{ [X_0, \dots, X_N] \in \C P^N \ | \ X_k\neq 0 \}$ are  the open affine subsets we have 
\begin{equation}\label{densityagain}
\Omega_c=J(\C^n)\sqcup F^{-1}(\{X_0=0\}).
\end{equation}

Notice also that by 
$${\mu \omega_{\Omega_c}}_{|F^{-1}(U_0)}=F^*\omega_{FS}=\frac{i}{2\pi}\partial\bar\partial\log(1+\sum_{k=1}^{N}|\frac{F_k(p)}{F_0(p)}|^2)=\frac{i}{2\pi}\partial\bar\partial\log
\frac{\|F(p)\|^2}{|F_0(p)|^2}$$ 
and 
$$J^*(\mu\omega_{\Omega_c})=
\frac{i}{2\pi}\partial\bar\partial\log N_\Omega^\mu (z, -\bar z)$$
we deduce that 
\begin{equation}\label{Ngood}
N_\Omega^\mu(z, -\bar z)=\frac{\|F(J(z))\|^2}{|F_0(J(z))|^2}.
\end{equation}

Let now $P(\C \oplus O(1))$ be  the $\C P^1$-bundle over $\C P^N\times \C P^N$, where
$O(1) \rightarrow \C P^N$ is  the hyperplane bundle. Denote by $Q_{\C P^N}=\Delta_{\C P^N}^*P(\C \oplus O(1))$ the  pull-back bundle over  $\C P^N$, where 
$\Delta_{\C P^N}$
is the diagonal map of $\C P^N\rightarrow \C P^N\times \C P^N$.
Analogously let 
$$P_{\Omega_c,\mu}=\Delta_{\Omega_c}^*P(\C \oplus L^{\otimes\mu})$$
 be  the $\C P^1$-bundle over $\Omega_c$ 
where  $P(\C \oplus L^{\otimes\mu})$ is the $\C P^1$-bundle over $\Omega_c\times \Omega_c$, $L^{\otimes\mu} = F^*(O(1))$ and $\Delta_{\Omega_c}$
is the diagonal map of $\Omega_c\rightarrow \Omega_c\times \Omega_c$.

Then we have a natural holomorphic embedding with dense image of $\C^{n+1}$  into $P_{\Omega_c, \mu}$   given by 
\begin{equation}\label{embCinP}
\hat J: \C^{n+1} \rightarrow F^{-1}(U_0) \times \C P^1 \subseteq P_{\Omega_c, \mu}, \ \ (z, w) \mapsto (J(z), [1, w]).
\end{equation}

Consider  the biholomorphic map $\Psi$ from $Q_{\C P^N}$ into the blowup  
$\Bl_{[0, \dots, 0,  1]}(\C P^{N+1})$ of $\C P^{N+1}$ at the point $[0, \dots, 0, 1]$
given locally on the trivializing open subset $U_k \times \C P^1 \subseteq Q_{\C P^N}$, $k=0, \dots ,N$, by 
\begin{equation}\label{identif}\begin{split}
\Psi_{|U_k\times \C P^1}: U_k \times \C P^1 \rightarrow \Bl_{[0, \dots, 0, 1]}(\C P^{N+1})\subset \C P^{N+1}\times \C P^N,\\
([Z_0, \dots, Z_N], [W_0, W_1]) \mapsto ([Z_0 W_0, \dots, Z_N W_0, Z_k W_1], [Z_0, \dots, Z_N]).
\end{split}
\end{equation}
It  is easily seen (see e.g. \cite[Chapter V]{HARTSHORNE1977})  that the maps $\Psi_{|U_k \times \C P^1}$ glue to a well-defined biholomorphism  $\Psi : Q_{\C P^N}\rightarrow \Bl_{[0,\dots,0, 1]}(\C P^{N+1})$).
Recall that 
\begin{equation}\label{defBU}
\Bl_{[0,\dots ,s0, 1]}(\C P^{N+1}) = \left\{\left([X_0, \dots, X_N, X_{N+1}], [Y_0, \dots, Y_N]\right) \ | \ X_i Y_j = X_j Y_i, \ i,j=0, \dots, N \ \right\}.
\end{equation}
 Since the latter is a submanifold of the product $\C P^{N+1} \times \C P^N$, it can be  embedded into a complex projective space by using the restriction of the Segre embeddings:
$$\Bl_{[0,\dots ,0, 1]}(\C P^{N+1})\subset\C P^{N+1} \times \C P^N \stackrel{Segre}{\rightarrow}\C P^K,\ K=\frac{(N+1)(N+4)}{2}$$
Thus, one can    consider the holomorphic embedding
$$\varphi=Segre\circ\Psi\circ\varphi_{P_{\Omega_c, \mu}}:P_{\Omega_c, \mu}\rightarrow \C P^{K},$$
where $\varphi_{P_{\Omega_c, \mu}}:P_{\Omega_c, \mu}\rightarrow Q_{\C P^N}$ is given locally by 
\begin{equation}\label{defphiP}
F^{-1}(U_k)\times \C P^1\rightarrow U_k\times \C P^1, \  (p, [W_0, W_1])\mapsto (F(p), [W_0, W_1]).
\end{equation}

Thus,  for  $\mu\in\Z^+$ we define the desired projectively induced  metric $g_{P, \mu}$
on $P_{\Omega_c, \mu}$ by 
\begin{equation}\label{defgP}
g_{P,\mu}:=\varphi^*g_{FS}.
\end{equation}
It remains then  to show  that $\hat J^*g_{P, \mu}=\hat g^*_{\Omega, \mu}$
(and hence  $(P_{\Omega_c, \mu}, g_{P, \mu})$ will be the desired  Fubini-Study compactification of $(\C ^{n+1}, \hat g^*_{\Omega, \mu})$, with $\hat J:\C^{n+1}\rightarrow(P_{\Omega_c, \mu}, g_{P, \mu})$).
In order to do this, notice that by \eqref{identif} the embedding 
$$\Psi\circ\varphi_{P_{\Omega_c, \mu}} :P_{\Omega_c, \mu}\rightarrow \Bl_{[0,\dots 0, 1]}(\C P^{N+1})\subset \C P^{N+1} \times \C P^N$$
reads on $F^{-1}(U_k)\times\C P^1$
as
\begin{equation}\label{locemb}
{\Psi\circ\varphi_{P_{\Omega_c, \mu}}}_{|F^{-1}(U_k)\times \C P^1}:(p, [W_0, W_1])\mapsto ([F(p)W_0, F_k(p)W_1], [F(p)]).
\end{equation}
Since the pull-back of the Fubini-Study metric on 
$\C P^K$ via the Segre embedding is given by the \K\  product of the Fubini-Study metrics
on $\C P^{N+1}$ and $\C P^N$, one gets
that  the \K\ form $\omega_{P,\mu}$ associated to $g_{P, \mu}$
is given on 
$F^{-1}(U_k)\times\C P^1$ by:
\begin{equation}\label{omegaP}
{{\omega_{P, \mu}}}_{|F^{-1}(U_k)\times\C P^1}=
\frac{i}{2\pi}\partial\bar\partial\log\left(\frac{\|F(p)\|^2}{|F_k(p)|^2}|W_0|^2+|W_1|^2\right)+
\frac{i}{2\pi}\partial\bar\partial\log \frac{\|F(p)\|^2}{|F_k(p)|^2}.
\end{equation}
Notice that 
$$\hat J (\C^{n+1})\subset F^{-1}(U_0)\times \{W_0\neq 0\},\  \varphi_{P_{\Omega_c, \mu}}(F^{-1}(U_0)\times \{W_0\neq 0\})\subset U_0\times \{W_0\neq 0\}$$
$$\Psi (U_0\times \{W_0\neq 0\})\subset \{X_0\neq 0\}\times \{Y_0\neq 0\}, \ Segre (\{X_0\neq 0\}\times \{Y_0\neq 0\})\subset \{T_0\neq 0\}.$$
where  $[T_0, \dots , T_K]$
denote  the  homogeneous coordinates of  $\C P^K$.
Hence, by passing to affine coordinates $(\frac{X_1}{X_0}, \dots , \frac{X_{N+1}}{X_0})$, $(\frac{Y_1}{Y_0}, \dots , \frac{Y_{N}}{Y_0})$, $(\frac{T_1}{T_0}, \dots , \frac{T_{K}}{T_0})$
and by using  \eqref{omegaP} (with $k=0$) one gets
$${\omega_{P,\mu}}_{|F^{-1}(U_0)\times \{W_0\neq 0\}}=
\frac{i}{2\pi}\partial\bar\partial\log \left(\frac{\|F(p)\|^2}{|F_0(p)|^2}+|w|^2\right)+\frac{i}{2\pi}\partial\bar\partial\log \frac{\|F(p)\|^2}{|F_0(p)|^2},$$

Then \eqref{Ngood} and  \eqref{embCinP} yield
$$\hat J^*{\omega_{P, \mu}}=\frac{i}{2\pi}\partial\bar\partial\log\left (\frac{\|F(J(z))\|^2}{|F_0(J(z))|^2}+|w|^2\right)+\frac{i}{2\pi}\partial\bar\partial\log
\left(\frac{\|F(J(z))\|^2}{|F_0(J(z))|^2}\right)$$
$$=\frac{i}{2\pi}\partial\bar\partial\log\left (N_\Omega(z, -\bar z)^{\mu}+|w|^2\right)+\frac{i}{2\pi}\partial\bar\partial\log
\left(N_\Omega(z, -\bar z)^{\mu}\right)=\hat\omega^*_{\Omega, \mu}.$$
 This concludes the proof of the theorem.
\end{proof}
\begin{remark}\rm
With the notation introduced in the proof  of Theorem \ref{mainteor2} one has 
$$P_{\Omega_c, \mu}=\hat J(\C^{n+1})\sqcup \varphi^{-1}(\{T_0=0\})$$
in  analogy with
\eqref{densityagain} for $\Omega_c$. 
\end{remark}
One can wonder if there exist any non  integer values of $\alpha$ e $\mu$
such that $\alpha \hat g^*_{\Omega, \mu}$ is projectively induced (in analogy with  Proposition \ref{propproj} for the metric $\alpha g^*_{\Omega, \mu}$).
This cannot happen.
\begin{prop}\label{propalphamu}
$\alpha \hat g^*_{\Omega, \mu}$ is projectively induced iff $\alpha, \mu\in\Z^+$.
\end{prop}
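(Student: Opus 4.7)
The plan has two directions. For sufficiency, if $\alpha, \mu \in \Z^+$, I would invoke the Fubini-Study compactification $(P_{\Omega_c, \mu}, g_{P, \mu})$ of $(\C^{n+1}, \hat g^*_{\Omega, \mu})$ constructed in the proof of Theorem \ref{mainteor2}. Since $g_{P, \mu}$ is finitely projectively induced, post-composition with Calabi's map (\cite[Th.~13]{CALABI1953diast}) makes $\alpha g_{P, \mu}$ projectively induced for every $\alpha \in \Z^+$, and therefore so is $\alpha \hat g^*_{\Omega, \mu} = \hat J^*(\alpha g_{P, \mu})$.

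For the converse, I would adapt the argument of Proposition \ref{propproj}. The first step is to use the hereditary property of Calabi's diastasis (\cite[Prop.~6]{CALABI1953diast}) to restrict $\alpha \hat g^*_{\Omega, \mu}$ to a $\C^2 \subset \C^{n+1}$ obtained from a totally geodesic $(\C P^1, \mu g_{FS}) \hookrightarrow (\Omega_c, \mu g_{\Omega_c})$ together with the $w$-coordinate. This produces a projectively induced metric on $\C^2$ with diastasis at the origin
\[
D_0(\xi, w) = \alpha\log\bigl((1+|\xi|^2)^\mu + |w|^2\bigr) + \alpha\mu\log(1+|\xi|^2).
\]
Further restriction to the slices $\{\xi = 0\}$ and $\{w = 0\}$ yields the metrics $\alpha g_{FS}$ and $2\alpha\mu\, g_{FS}$ on $\C$, respectively, forcing $\alpha \in \Z^+$ and $2\alpha\mu \in \Z^+$.

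The crucial remaining step is to upgrade $2\alpha\mu \in \Z^+$ to $\mu \in \Z^+$. For this I would restrict the metric on $\C^2$ to the affine slice $\{w = 1\}$, whose radial diastasis centred at $\xi = 0$ is
\[
D(\xi) = \alpha\log\bigl((1+|\xi|^2)^\mu + 1\bigr) + \alpha\mu\log(1+|\xi|^2) - \alpha\log 2,
\]
so that
\[
2^\alpha e^{D} = \sum_{p=0}^\alpha \binom{\alpha}{p}(1+|\xi|^2)^{(p+\alpha)\mu}.
\]
Writing $\mu = a/b$ in lowest terms and assuming $b > 1$ for contradiction, Calabi's criterion requires the coefficients $A_h = \sum_{p=0}^\alpha \binom{\alpha}{p}\binom{r_p}{h}$ of $|\xi|^{2h}$ in $2^\alpha(e^{D} - 1)$ (with $r_p := (p+\alpha)\mu$) to be non-negative for every $h \geq 1$. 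Since $r_1 - r_0 = \mu$, at least one $r_p$ fails to be an integer; let $p_*$ denote the smallest such index. For any positive non-integer $r$, Stirling's formula combined with the reflection formula for $\Gamma$ yields $|\binom{r}{h}| \sim C_r h^{-r-1}$ with $C_r > 0$, and the sign of $\binom{r}{h}$ alternates for $h$ sufficiently large. Mimicking the computation \eqref{eqprlemp}--\eqref{eqlimaob}, I would then establish
\[
\lim_{h\to\infty} \frac{A_h}{\binom{r_{p_*}}{h}} = \binom{\alpha}{p_*} > 0,
\]
by using that integer-$r_p$ terms vanish for $h$ larger than $r_p$ and that the remaining non-integer $r_p$ contributions decay strictly faster than $r_{p_*}$. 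Consequently $A_h$ inherits the alternating sign of $\binom{r_{p_*}}{h}$ and is eventually negative infinitely often, contradicting Calabi's criterion. Hence $b = 1$ and $\mu \in \Z^+$.

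The main obstacle is precisely this asymptotic isolation of the ``slowest-decaying'' non-integer $r_p$: since all weights $\binom{\alpha}{p}$ are positive, one must rule out compensating cancellations among the binomial coefficients $\binom{r_p}{h}$ for distinct $r_p$. The key structural inputs are the strict incomparability of the decay rates $h^{-r_p-1}$ and the eventual vanishing of integer-$r_p$ contributions, both of which directly parallel the mechanism employed in Proposition \ref{propproj}.
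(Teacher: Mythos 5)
Your proposal is correct and follows essentially the same route as the paper: sufficiency via the compactification $(P_{\Omega_c,\mu},g_{P,\mu})$ of Theorem \ref{mainteor2} composed with Calabi's map, and necessity by restricting through the slices $\{\xi=0\}$, $\{w=0\}$, $\{w=1\}$ and detecting a negative Taylor coefficient of $e^D-1$ exactly as in Proposition \ref{propproj}. The only difference is technical: where the paper isolates the dominant term through the divergence of an infinite product (the ratio argument $\lim_{\ell\to\infty}B_{\ell,p}/B_{\ell,1}=0$), you use the asymptotics $|\binom{r}{h}|\sim C_r h^{-r-1}$ with alternating sign for non-integer $r$, and your identification of $p_*$ as the \emph{smallest} index with non-integer exponent $r_p=(p+\alpha)\mu$ correctly accounts for the fact that here, unlike in Proposition \ref{propproj}, the $p=0$ exponent $\alpha\mu$ need not be an integer.
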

\begin{proof}
If $\alpha, \mu\in\Z^+$ then the proof of Theorem \ref{mainteor2}
shows that $\alpha g_{P, \mu}$ and hence $\alpha \hat g^*_{\Omega, \mu}$
is finitely projectively induced.
On the other hand, 
 if  $\alpha \hat g^*_{\Omega, \mu}$ is projectively induced then by taking $z =0$ in \eqref{diastmodmetricdual} we see that 
$$\alpha\frac{i}{2\pi}\partial\bar\partial\log(1+|w|^2)={\alpha g_{FS}}_{|\C},$$
where $\C=U_{0}=\{W_0\neq 0\}$ with affine coordinate $w=\frac{W_1}{W_0}$,  
is projectively induced,  forcing $\alpha\in\Z^+$. 
It remains to prove that if $\alpha \hat g^*_{\Omega, \mu}$ is projectively induced then 
$\alpha, \mu\in\Z^+$.
As  in the proof  of Proposition \ref{propproj}, namely by first  restricting to 
$\left\{\left(\xi,w\right)\in\C^2\mid \xi=0\right\}$ and  $\left\{\left(\xi,w\right)\in\C^2\mid w=0\right\}$,
and then to 
$\left\{\left(\xi,w\right)\in\C^2\mid w=1\right\}$, 
one deduces that if   $\alpha \hat g^*_{\Omega, \mu}$ is projectively induced then 
the same holds true for 
the \K\ metric $g$ whose Calabi's diastasis function at the origin is given by 
$$D(\xi)=bk\log\left(\left(\left(1+|\xi|^2\right)^\frac{a}{2b}+1\right)\left(1+|\xi|^2\right)^\frac{a}{2b}\right)-bk\log 2$$
for some $a,b,k\in \Z^+$ and  $a,b$ coprime.
Following  the exact  same line of reasoning as in the proof of Proposition \ref{propproj},  one can  deduce 
that the condition  $\frac{a}{2b}\not\in \Z$
implies that at least  one of  the coefficients of the Taylor expansion  in $\xi$ and $\bar \xi$ around the origin
of $e^D-1$  is negative. Therefore,  the conclusion follows by Calabi's criterium.  
\end{proof}


By combining Proposition \ref{propalphamu} and Theorem \ref{mainteor2} we immediately get
\begin{cor}
$(\C^{n+1}, \alpha \hat g^*_{\Omega, \mu})$ has a Fubini-Study compactification iff $\alpha, \mu \in\Z^+$.
\end{cor}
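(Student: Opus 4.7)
The plan is to prove both implications by directly combining the two results we already have.

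For the sufficiency direction ($\alpha,\mu\in\Z^+\Rightarrow$ existence of a Fubini-Study compactification), I would not reprove anything; I would simply unpack the construction carried out inside the proof of Theorem \ref{mainteor2}. For any $\mu\in\Z^+$, that proof produces a compact \K\ manifold $(P_{\Omega_c,\mu},g_{P,\mu})$ with $g_{P,\mu}$ finitely projectively induced via $\varphi=Segre\circ\Psi\circ\varphi_{P_{\Omega_c,\mu}}$, and a holomorphic embedding with dense image $\hat J:\C^{n+1}\to P_{\Omega_c,\mu}$ satisfying $\hat J^*g_{P,\mu}=\hat g^*_{\Omega,\mu}$. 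Given any $\alpha\in\Z^+$, applying Calabi's map $V_\alpha$ (\cite[Th.13]{CALABI1953diast}) to a projective embedding realizing $g_{P,\mu}$ yields a projective embedding realizing $\alpha g_{P,\mu}$; hence $(P_{\Omega_c,\mu},\alpha g_{P,\mu})$ is compact, finitely projectively induced, and pulls back via $\hat J$ to $\alpha\hat g^*_{\Omega,\mu}$. By Definition \ref{FScomp} this is exactly a Fubini-Study compactification of $(\C^{n+1},\alpha\hat g^*_{\Omega,\mu})$. Note here that we do not need $\alpha$ to be large, unlike in the completion half of Theorem \ref{mainteor2}.

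For the necessity direction, suppose $(\tilde M,\tilde g)$ is a Fubini-Study compactification of $(\C^{n+1},\alpha\hat g^*_{\Omega,\mu})$ with holomorphic isometric embedding $\tilde\jmath:(\C^{n+1},\alpha\hat g^*_{\Omega,\mu})\to(\tilde M,\tilde g)$. By Definition \ref{FScomp}, $\tilde g$ is finitely projectively induced, i.e.\ there exists a holomorphic isometry $\psi:(\tilde M,\tilde g)\to(\C P^N,g_{FS})$ for some finite $N$. Then $\psi\circ\tilde\jmath:\C^{n+1}\to\C P^N$ is a holomorphic map satisfying $(\psi\circ\tilde\jmath)^*g_{FS}=\alpha\hat g^*_{\Omega,\mu}$, so $\alpha\hat g^*_{\Omega,\mu}$ itself is (finitely, hence in particular) projectively induced. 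Proposition \ref{propalphamu} then forces $\alpha,\mu\in\Z^+$, which is what we wanted.

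There is no serious obstacle here: both directions are essentially bookkeeping, because the hard work is already done in Theorem \ref{mainteor2} (explicit compactification) and Proposition \ref{propalphamu} (integrality from projective inducibility via the diagonal/axis restrictions to $\{\xi=0\}$, $\{w=0\}$ and $\{w=1\}$). The only minor point to check is that in the sufficiency half one is entitled to use arbitrary $\alpha\in\Z^+$, not merely large ones; this is exactly where Calabi's map comes in and replaces the completeness-based argument needed on the Fubini-Study completion side of Theorem \ref{mainteor2}.
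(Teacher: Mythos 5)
Your proof is correct and follows essentially the same route as the paper, which derives the corollary precisely by combining Proposition \ref{propalphamu} (for necessity) with the explicit compactification $(P_{\Omega_c,\mu},\alpha g_{P,\mu})$ built in the proof of Theorem \ref{mainteor2} together with Calabi's map (for sufficiency, valid for every $\alpha\in\Z^+$). Your remark that no largeness of $\alpha$ is needed on the compactification side matches the paper's own observation inside that proof.
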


\subsection{$(P_{\Omega_c, \mu}, g_{P, \mu})$ as $C1$ $G$-space}
In this subsection we show that $(P_{\Omega_c, \mu}, g_{P, \mu})$ is a
{\em cohomogeneity $1$ space}, for all $\mu\in \Z^+$ (see Proposition \ref{notrelaxhom} below).
Since $(P_{\Omega_c, \mu}, g_{P, \mu})$ is a Fubini-Study compactification of 
 the exact domain $(\C^{n+1}, \hat g^*_{\Omega, \mu})$,  which  admits a \K\ dual given by  $(M_{\Omega, \mu}, \hat g_{\Omega, \mu})$, this proposition  shows that 
 the condition of homogeneity in Theorem \ref{mainteor3} cannot be  relaxed  to cohomogeneity $1$.
Before proving this fact,  we briefly review some basic facts about cohomogeneity $1$ $G$-spaces,  referring the reader 
to  \cite{PODESTASPIRO1999}, \cite{ALEKSEEVSKYZUDDAS2017} and \cite{ALEKSEEVSKYZUDDAS2020} for more detailed information  on the subject.

Given a semisimple Lie group $G$, a \K\ manifold $(P, \omega_P)$ is called a {\em cohomogeneity $1$ $G$-space} (from now on, $C1$ $G$-space) if there exists an action by holomorphic isometries of $G$ on $P$ which has an orbit of (real) codimension 1 in $P$. Such an orbit is called a {\it regular orbit}, and the union of regular orbits is called the {\it regular set} $P_{reg}$; the orbits of higher codimension are called {\it singular orbits}.

A compact C1 $G$-space $P$ has regular orbits of the kind $G/S$ (where $S$ is the stabilizer of a regular point) and two singular orbits $G/H_0, G/H_1$. We say that the action of $G$ on $P$ is {\it ordinary} if 
\begin{equation}\label{ordinary}
\dim N_G(S) = \dim S + 1,
\end{equation}
where $N_G(S)$ is the  normalizer of $S$ in $G$. This assumption allows us to associate to the regular orbits $G/S$ a flag manifold $G/N_G(S)$. This flag manifold is called {\it the flag associated to the regular orbits}. 

Let now $\Omega_c$ be an irreducible Hermitian symmetric space of compact type (i.e. the dual of a Cartan domain $\Omega$). Write $\Omega_c=G/K$, where $G$ is a simple Lie group and  
$K$ is a  maximal  compact subgroup  of $G$.

Notice  that the $\C P^1$-bundle $P_{\Omega_c, \mu}\rightarrow \Omega_c$  can be trivialized 
on the open subsets $F^{-1}(U_j) = \{ p \in M \ | \ F_j(p) \neq 0 \}$ with the following identifications given by the transition functions
$$F^{-1}(U_j) \times \C P^1 \ni (p, \left[W_0, W_1\right]) \simeq \left(p, \left[W_0, \frac{F_j(p)}{F_k(p)} W_1\right]\right) \in F^{-1}(U_k) \times \C P^1,$$
where $F:\Omega_c\rightarrow \C P^N$
is given by \eqref{newF}. 

Define an action of $G$ on $P_{\Omega_c, \mu}$ locally  in the following way. Given 
$$(p, [W_0, W_1]) \in F^{-1}(U_j) \times \C P^1\subset P_{\Omega_c, \mu},$$ 
and $g p\in F^{-1}(U_k)$, then 

\begin{equation}\label{defaction}
g \cdot (p, \left[W_0, W_1\right]) = \left( g p, \left[W_0, \frac{F_j(p)}{\lambda_g^{-1}\left(F_k(g  p)\right)} W_1 \right] \right) \in  F^{-1}(U_k) \times \C P^1,
\end{equation}
where $\lambda_g: L^{\otimes\mu}_p \rightarrow L^{\otimes\mu}_{gp}$ denotes a lifting to the fibers of the action map $\Omega_c \rightarrow \Omega_c, p \mapsto g p$.
Checking that this local definition extends to a well-defined action on $P_{\Omega_c, \mu}$ and that it is in fact a group action are straightforward by using the transition functions, and we omit that. 

 Let us see more in detail the structure of the orbits in order to see that this is a C1 $G$-action. This is done more easily by using the fact that, as we have seen in the proof of Theorem \ref{mainteor2} (see \eqref{locemb}), the $\C P^1$-bundle $P_{\Omega_c, \mu}$ can be seen as a submanifold of the blowup $Bl_{[0,\dots,0, 1]}(\C P^{N+1})$ via the embedding whose local expression on  
$F^{-1}(U_k) \times \C P^1$ is
\begin{equation*}
(p, \left[W_0, W_1\right])\mapsto (\left[F(p)W_0, F_k(p)W_1\right], [F(p)]).
\end{equation*}
Then, by (\ref{defaction}), one can  easily see that  the action can be written  as
\begin{equation}\label{actionBLowup}
g \cdot \left(\left[\frac{F(p)}{ F_j(p)}W_0, W_1\right], [F(p)]\right) = \left(\left[\frac{\lambda_g^{-1}\left(F(gp)\right)}{ F_j(p)}W_0, W_1\right], [F(gp)]\right),
\end{equation}
for any $p \in \Omega_c$ with $F_j(p) \neq 0$.
s

Now, the  singular orbits of this action (\ref{actionBLowup}) can be easily described as follows:
\begin{enumerate}
\item[-] if $W_0 = 0$, we have $g \cdot ([0, \dots, 0 ,1], [F(p)]) = ([0, \dots, 0 ,1], [F(gp)])$
and then we have a singular orbit isomorphic to $F(\Omega_c)$;
\item[-] if $W_1 = 0$, we have $g \cdot ([F(p), 0], [F(p)]) = ([F(gp), 0], [F(gp)])$
 and we get again a singular orbit isomorphic to $F(\Omega_c)$.
\end{enumerate}
Hence, in this case $G/H_0\cong G/H_1\cong G/K$.

The following lemma provides a description of 
the orbits when 
$W_0, W_1\neq 0$.

\begin{lem}\label{proporbiteaperte}
The orbit of a point
$\left( \left[\frac{W_0}{W_1} \frac{F(p)}{F_k(p)}, 1 \right], [F(p)] \right)\in P_{\Omega_c, \mu}$ with $W_0, W_1 \neq 0$ can be identified with the subset of the sphere $S^{2N+1}(r) \subseteq \C^{N+1}$ of radius $r = \| \frac{W_0}{W_1} \frac{F(p)}{F_k(p)}  \|$ such that the image of this subset in $S^{2N+1}(r)/\{ e^{i \phi} \} = \C P^N$ is exactly $F(\Omega_c)$.
\end{lem}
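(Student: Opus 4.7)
The plan is to work in the embedding $\Psi\circ\varphi_{P_{\Omega_c,\mu}}:P_{\Omega_c,\mu}\hookrightarrow\Bl_{[0,\dots,0,1]}(\C P^{N+1})\subset\C P^{N+1}\times\C P^N$ given by \eqref{locemb}, and to exploit the affine chart of $\C P^{N+1}$ in which the last homogeneous coordinate is non-zero. In this chart (using the local expression over $F^{-1}(U_k)\times\C P^1$) the base point $q=\bigl(\bigl[\tfrac{W_0}{W_1}\tfrac{F(p)}{F_k(p)},1\bigr],[F(p)]\bigr)$ is identified with the vector $v_0=\tfrac{W_0}{W_1}\tfrac{F(p)}{F_k(p)}\in\C^{N+1}$; the trivialization ambiguities of $F(p)$ and $F_k(p)$ cancel, so $v_0$ and its Euclidean norm $r=\bigl|\tfrac{W_0}{W_1}\bigr|\tfrac{\|F(p)\|}{|F_k(p)|}$ are intrinsically defined, and the second factor of the blowup is redundant as $[v_0]=[F(p)]\in\C P^N$.

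The next step is to show that this Euclidean norm is preserved along $G\cdot q$. Since $\Omega_c=G/K$ is homogeneous, with a $G$-invariant hermitian metric on $L^{\otimes\mu}$ and a $G$-invariant volume form, we may choose the basis $F_0,\dots,F_N$ of $H^0(L^{\otimes\mu})$ to be $L^2$-orthonormal; then the induced representation $g\mapsto A(g)$ of $G$ on $H^0(L^{\otimes\mu})\cong\C^{N+1}$ is unitary, and in any fixed local trivialization of $L^{\otimes\mu}$ near $p$ the identity $\lambda_g^{-1}(F_j(gp))=(g\cdot F_j)(p)=\sum_i A_{ij}(g)F_i(p)$ holds. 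Taking squared moduli and summing over $j$ yields
\begin{equation}\label{orbitunit}
\sum_{j=0}^{N}|\lambda_g^{-1}(F_j(gp))|^2=\sum_{j=0}^{N}|F_j(p)|^2
\end{equation}
by unitarity of $A(g)$. Using \eqref{actionBLowup} with $j=k$, the first-factor vector attached to $g\cdot q$ is $\tfrac{W_0}{W_1}\tfrac{\lambda_g^{-1}(F(gp))}{F_k(p)}$, and \eqref{orbitunit} shows that this still has Euclidean norm $r$. Hence $G\cdot q$ is identified, through the chosen affine chart, with a subset of $S^{2N+1}(r)\subset\C^{N+1}$.

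Finally, observe that $\lambda_g^{-1}:L^{\otimes\mu}_{gp}\to L^{\otimes\mu}_p$ acts on each fiber as multiplication by a non-zero scalar, hence applying it componentwise does not alter the projective class; the Hopf projection $S^{2N+1}(r)\to S^{2N+1}(r)/\{e^{i\phi}\}\cong\C P^N$ therefore sends $\tfrac{W_0}{W_1}\tfrac{\lambda_g^{-1}(F(gp))}{F_k(p)}\mapsto[F(gp)]$, and transitivity of $G$ on $\Omega_c=G/K$ guarantees that, as $g$ ranges over $G$, this image covers exactly $F(\Omega_c)$, which is the content of the lemma. The main subtlety is the unitarity identity \eqref{orbitunit}, which rests on choosing the basis $\{F_j\}$ to be $L^2$-orthonormal so that the lift $g\mapsto\lambda_g$ implicit in \eqref{defaction} agrees with the $G$-action induced by the unitary representation on $H^0(L^{\otimes\mu})$; this matching is the standard form of Borel--Weil equivariance for compact homogeneous \K\ manifolds and should be spelled out at this stage of the proof.
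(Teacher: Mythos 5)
Your identification of the orbit with a subset of $S^{2N+1}(r)$ is sound, and your route to the norm identity is actually cleaner than the paper's: you obtain $\sum_j|\lambda_g^{-1}(F_j(gp))|^2=\sum_j|F_j(p)|^2$ from the unitarity of the induced representation on an $L^2$-orthonormal basis of $H^0(L^{\otimes\mu})$, whereas the paper simply asserts the same identity \eqref{normcal} "by Calabi's rigidity theorem." Up to that point the two arguments agree in substance.

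However, there is a genuine gap in your last step. The lemma asserts that the orbit is \emph{exactly} the full preimage $p^{-1}(F(\Omega_c))$ under the Hopf projection $p:S^{2N+1}(r)\to \C P^N$, i.e.\ a circle bundle over $F(\Omega_c)$ of real dimension $2\dim_{\C}\Omega_c+1$. What you prove is only that the orbit is a subset of $S^{2N+1}(r)$ whose image under $p$ covers $F(\Omega_c)$; transitivity of $G$ on $\Omega_c$ gives surjectivity onto the base but says nothing about the fibers. Concretely, the fiber of the orbit over $[F(p)]$ is $K\cdot v_0=\rho(K)v_0$, where $K$ is the stabilizer of $p$ and $\rho:K\to U(1)$ is the character by which $K$ acts on the line $L^{\otimes\mu}_p$; if $\rho$ were trivial (or had finite image) the orbit would be a proper subset of $p^{-1}(F(\Omega_c))$ of strictly smaller dimension, and the cohomogeneity-one count in Proposition \ref{notrelaxhom} would fail. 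The missing ingredient — and the heart of the paper's proof — is to show that for every $e^{i\phi}$ there exists $g\in K$ with $\lambda_g^{-1}(F(gp))=e^{i\phi}F(p)$. The paper does this by noting that, $\Omega_c=G/K$ being symmetric, $K$ is the centralizer of a one-dimensional compact torus $T$, so the lift of the action yields a representation $\rho:T\to \Aut(L^{\otimes\mu}_p)\simeq\C^*$ whose image is either $\{1\}$ or $U(1)$; it cannot be $\{1\}$ because a homogeneous line bundle over $G/K$ is determined by the isotropy representation on a fiber and is trivial precisely when that representation is trivial, while $L^{\otimes\mu}$ is nontrivial. You need to supply this (or an equivalent) argument to close the proof.
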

\begin{proof}

Note  that by \eqref{actionBLowup} we can write 
\begin{equation}\label{azionesuregular}
g \cdot \left( \left[\frac{W_0}{W_1} \frac{F(p)}{F_k(p)}, 1 \right], [F(p)] \right) = \left( \left[\frac{W_0}{W_1} \frac{\lambda_g^{-1}(F(gp))}{F_k(p)}, 1 \right], [F(gp)] \right).
\end{equation}

Note also  that 
\begin{equation}\label{normcal}
\|\frac{F(p)}{F_k(p)} \| = \| \frac{\lambda_g^{-1}\left(F(gp)\right)}{F_k(p)}\|,
\end{equation}
for any $g \in G$ and any $p \in M$ with $F_k(p) \neq 0$.
This can be proved, for example,  by using  Calabi's rigidity theorem.

From (\ref{azionesuregular}) and  \eqref{normcal} one can define the   following map from the orbit ${\mathcal O}$ of  the point $\left( \left[\frac{W_0}{W_1} \frac{F(p)}{F_k(p)}, 1 \right], [F(p)] \right)$ to the sphere $S^{2N+1}(r) \subset \C^{N+1}$ of radius $r = \| \frac{W_0}{W_1} \frac{F(p)}{F_k(p)}  \|$:

\begin{equation}\label{fO}
f: {\mathcal O} \rightarrow S^{2N+1}(r), \ \ \ \left(\left[\frac{W_0}{W_1} \frac{\lambda_g^{-1}(F(gp))}{F_k(p)}, 1 \right], [F(gp)] \right) \mapsto \frac{W_0}{W_1} \frac{\lambda_g^{-1}(F(gp))}{F_k(p)}
\end{equation}

One immediately sees that $f$ is injective and is in fact a diffeomorphism onto its image, so that  the orbit identifies with the 
set 
$$X:=\left\{ \frac{W_0}{W_1} \frac{\lambda_g^{-1}(F(gp))}{F_k(p)} \ | \ g \in G \right\}\subset S^{2N+1}(r)$$

We have to prove that $X= p^{-1}(F(\Omega_c))$, 
where 
$$p: S^{2N+1}(r) \rightarrow S^{2N+1}(r)/\{ e^{i \phi} \} = \C P^N$$ 
denotes  the canonical projection to the quotient.

The inclusion $X\subset  p^{-1}(F(\Omega_c))$ is straightforward. 
Hence, it remains to  prove that for any $e^{i \phi} \in \C$ there exists $g \in G$ such that
\begin{equation}\label{aim}
\frac{\lambda_g^{-1}(F(gp))}{F_k(p)} \frac{W_1}{W_0} = e^{i \phi}\frac{F(p)}{F_k(p)} \frac{W_1}{W_0}.
\end{equation}

\smallskip

In order to show this, notice that, since $\Omega_c =G/K$ is a symmetric space, the isotropy group $K$ of a point $p$ is the centralizer of a one-dimensional compact torus $T$. Since $gp = p$ for every $g \in T$, the lifting of the action of $G$ to $L^{\otimes\mu}$ induces then an isomorphism $\lambda_g: L^{\otimes\mu}_p \rightarrow L^{\otimes\mu}_p$ of the fiber $L^{\otimes\mu}_p$, which yields a one-dimensional representation $\rho: T \rightarrow Aut(L^{\otimes\mu}_p) \simeq \C^*$ of the torus $T$. Being $U(1)$ the only non-trivial connected compact subgroup of $\C^*$, it must be either $\rho(T) = \{1\}$ or $\rho(T) = U(1)$. 
But it cannot be $\rho(T) = \{1\}$ since it is known (see, for example,  \cite{snowbook1989}) that every complex line bundle on $G/K$ is completely determined by the representation of $T$ on a fiber, and this representation is trivial if and only if the line bundle is the trivial bundle, which is not the case.

Then, we have $\rho(T) = U(1)$ and so  for every $e^{i \phi}$ there exists $g \in T$ such that 

$$ \frac{\lambda_g^{-1} (F(gp))}{F_k(p)} = e^{i \phi} \frac{F(p)}{F_k(p)}$$

which proves \eqref{aim} and  concludes the proof of the lemma.
\end{proof}

\begin{prop}\label{notrelaxhom}
 $P_{\Omega_c, \mu}$
 is a cohomogeneity $1$ $G$-space, the action of $G$ is ordinary and the 
 flag associated to the regular orbit of this action is $\Omega_c=G/K$.
 Moreover,  $g_{P, \mu}$
is a $G$-invariant \K\  metric.
\end{prop}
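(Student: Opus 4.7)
The plan is to assemble the orbit information already available: the two singular orbits $G/H_0\cong G/H_1\cong\Omega_c=G/K$ identified in the paragraph before Lemma \ref{proporbiteaperte}, together with the description of the regular orbits provided by that lemma.

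First, I would verify cohomogeneity one by a dimension count. By Lemma \ref{proporbiteaperte}, a regular orbit through a point $(p,[W_0,W_1])$ with $W_0,W_1\neq 0$ is diffeomorphic to the $S^1$-bundle $p^{-1}(F(\Omega_c))$ inside the sphere $S^{2N+1}(r)$, hence has real dimension $2n+1$, precisely one less than $\dim_{\R} P_{\Omega_c,\mu}=2(n+1)$. Together with the existence of the two proper singular orbits, this establishes the $C1$ property.

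Next, I would compute the stabiliser $S$ of a regular point $(p,[W_0,W_1])$ and verify the ordinary condition \eqref{ordinary}. Any element of $S$ must fix $p\in\Omega_c$, hence $S\subseteq K$; combining \eqref{defaction} with the fact that $L^{\otimes\mu}$ is a homogeneous line bundle over $\Omega_c=G/K$, the action of $K$ on the $\C P^1$ fibre is governed by a character $\chi\colon K\to U(1)$, nontrivial by the argument given at the end of the proof of Lemma \ref{proporbiteaperte}. Hence $S=\ker\chi$ has codimension one in $K$, so $K\subseteq N_G(S)$ and $\dim N_G(S)\geq \dim S+1$. The reverse inequality follows from the fact that $K$ is self-normalising in $G$ for a Hermitian symmetric space of compact type, yielding $N_G(S)=K$; this proves \eqref{ordinary} and identifies the associated flag as $G/N_G(S)=G/K=\Omega_c$. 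The main obstacle I expect lies precisely in this step, since ruling out elements of $G\setminus K$ that might still normalise $\ker\chi$ relies crucially on the Hermitian symmetric structure.

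Finally, the $G$-invariance of $g_{P,\mu}$ follows from the $G$-equivariance of the projective embedding $\varphi=\mathrm{Segre}\circ \Psi\circ \varphi_{P_{\Omega_c,\mu}}\colon P_{\Omega_c,\mu}\to \C P^K$ used in \eqref{defgP}. Indeed the Borel--Weil embedding $F$ in \eqref{newF} is $G$-equivariant with respect to a unitary action of $G$ on $\C P^N$ (induced by the natural representation on $H^0(L^{\otimes\mu})$), and both the blow-up identification $\Psi$ and the Segre embedding intertwine the corresponding product actions. Pulling back the unitarily invariant Fubini--Study metric then produces a $G$-invariant \K\ metric on $P_{\Omega_c,\mu}$.
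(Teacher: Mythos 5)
Your proposal follows essentially the same route as the paper: cohomogeneity one via the dimension count from Lemma \ref{proporbiteaperte}, the ordinary condition via the identification $N_G(S)=K$ together with $\dim K=\dim S+1$, and the $G$-invariance of $g_{P,\mu}$ as a short final step (the paper derives it from \eqref{omegaP} and \eqref{normcal} rather than from equivariance of the embedding, but both work). One small correction on the step you yourself flag as delicate: the relevant fact for $N_G(S)\subseteq K$ is not that $K$ is self-normalising but that $K$ is a \emph{maximal} subgroup of the simple group $G$ (which is what the paper invokes); since $K\subseteq N_G(S)$, maximality forces $N_G(S)=K$ unless $S$ were normal in $G$, which simplicity rules out.
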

\begin{proof}
Notice that the regular orbits are the  orbits of the points such that $W_0, W_1 \neq 0$. Indeed, on the one hand we have seen in Lemma \ref{proporbiteaperte} 
that these orbits identify with $p^{-1}(F(\Omega_c))$, and then their (real) dimension is 
$$2 \dim_{\C}(F(\Omega_c)) + 1 = 2\dim_{\C}(\Omega_c) + 1.$$ 
On the other hand, since $P_{\Omega_c}$ is a $\C P^1$-bundle on $\Omega_c$ we have 
$$\dim_{\R}(P_{\Omega_c, \mu}) = 2(\dim_{\C}(\Omega_c) + 1) = 2\dim_{\C}(\Omega_c) + 2.$$ and then the real codimension of a regular orbit  is exactly $1$.

In order to prove that the action is ordinary,
let $S = \{ g \in G \ | \ g u_0 = u_0 \}$ be the stabilizer of a regular point $u_0\in f(\mathcal {O})\subset S^{2N+1}(r)$ (see \eqref{fO}). 
Notice that
$$N_G(S) = Y,$$
where
$$Y:=\{ g \in G \ | \ g u_0 = \lambda u_0, \ \textrm{for some} \ \lambda \in \C \ \}.$$

Indeed, if $g u_0 = \lambda u_0$ we have, for any $s \in S$,
$$g s g^{-1} u_0 = g s \lambda^{-1} u_0 = \lambda^{-1} g u_0 = u_0$$

and then $g \in S$. This proves that $Y \subseteq N_G(S)$. Now, clearly
$$Y = \{ g \in G \ | \ g [u_0] = [u_0] \} = K,\quad  [u_0]\in S^{2N+1}(r)/\{e^{i\phi}\},$$
where $K$ is the isotropy subgroup of the symmetric space $F(\Omega_c) \simeq \Omega_c = G/K$.

So $K \subseteq N_G(S)$: but since $\Omega_c = G/K$ is symmetric we have that $K$ is a maximal compact subgroup in $G$ and then we must in fact have $K = N_G(S)$.

Now, the fact we proved in Lemma \ref{proporbiteaperte}  that the orbit $G/S$ identifies with $p^{-1}(G/K)$, implies that  $\dim(G/S) = \dim(G/K) + 1$. This means that $\dim(K) = \dim(S) + 1$ which, combined with $K = N_G(S)$, shows that    \eqref{ordinary} holds true and hence $P_{\Omega_c, \mu}$ is an ordinary C1 space.
Finally the fact that $g_P$ is $G$-invariant follows by combining \eqref{omegaP}
and \eqref{normcal}.
\end{proof}

\section{The proof of Theorem \ref{mainteor3}}\label{FLAG}

In order to prove Theorem \ref{mainteor3} we start with some remarks on Calabi's diastasis
function and  \K\ duality.

Recall (see \cite{CALABI1953diast}) that among all the potentials  of a real analytic  \K\ metric $g$ on a complex manifold $M$,   
 Calabi's diastasis function  characterized by 
\begin{equation}\label{defdiastasis}
D_0^g(z)=\sum _{|I|, |J|\geq 0}a_{IJ}z^I\bar z^J, \ a_{J 0}=a_{0 J}=0.
\end{equation}
for any choice of  local complex coordinates  $(z)$ centred at $p$.

Now, if  $(U, g)$ is an exact domain
with associated \K\ form $\omega=\frac{i}{2\pi}\partial\bar\partial D^g_0$  which admits a \K\ dual  $(U^*, g^*)$
then,  by \eqref{diastasisstar},  $D_0^{g^*}(z, \bar z)=-D_0^g(z, -\bar z)$ turns out to be  real-valued.
Viceversa,  if  
there exists  a neighbourhood of $0$ such that 
$$\left(D_0^g\right)^*(z, \bar z):=-D_0^g(z, -\bar z)$$
is real-valued then $(U, g)$ admits a \K\ dual. 
Indeed, notice that 
$$\frac{\partial^2 (D_0^g)^*}{\partial {z_\alpha}\partial \bar{z_\beta}}(0)=\frac{\partial^2 D_0^g}{\partial {z_\alpha}\partial \bar{z_\beta}}(0)$$
and since the matrix on the right hand side is positive definite it follows that the matrix on the left hand side is positive definite 
on a neighborhood of $0$.
Thus $\omega^*=\frac{i}{2\pi}\partial\bar\partial (D_0^g)^*$  is a  \K\ form  in this neighborhood.
If $g^*$ is the \K\ metric associated to $\omega^*$ then $(U^*, g^*)$
is a \K\ dual of $(U, g)$, where $U^*$ is the maximal domain of extension of  $(D_0^g)^*$.
Indeed the later
 turns out to be the diastasis function 
at $0$ for the metric $g^*$ on $U^*$, i.e. 
$$(D_0^g)^*(z, \bar z)=D_0^{g^*}(z, \bar z).$$ 
This can be seen  by noticing that 
\begin{equation}\label{procedure}
(D_0^g)^*(z)=\sum _{|I|, |J|\geq 0}a^*_{IJ}z^I\bar z^J=\sum _{|I|, |J|\geq 0}(-1)^{|J|}a_{IJ}z^I\bar z^J
\end{equation}
and hence  $a^*_{J0}=a^*_{0J}=0$
which proves our claim by the very  definition of the Calabi's diastasis function.

More generally, we have 
$a^*_{IJ} = - a_{IJ}(-1)^{|J|}$.
Hence, the fact that $D_0^{g^*}(z, \bar z)=-D_0^g(z, -\bar z)$ 
is real valued gives  $a^*_{JI} = \overline{a^*_{IJ}}$, i.e. 
$$-a_{JI} (-1)^{|I|} = - \overline{a_{IJ}} (-1)^{|J|}$$
which combined with $a_{JI} = \overline{a_{IJ}}$ yields 
\begin{equation}\label{dualIJ}
(-1)^{|I|} = (-1)^{|J|},\  \mbox{if}\ \  a_{IJ} \neq 0.
\end{equation}
In view of \eqref{dualIJ} we give the following

\begin{defin}
A monomial $a_{IJ} z^I \bar z^J$ with $a_{IJ} \neq 0$
in the expansion of the Calabi diastasis function $D_0^g(z, \bar z)$
is called a {\em forbidden monomial of kind $(|I|, |J|)$} if  $(-1)^{|I|} \neq (-1)^{|J|}$.
\end{defin}

Thus we obtain

\begin{lem}\label{dualiffnotforb}
An exact domain $(U, g)$ admits a \K\ dual $(U^*, g^*)$  iff the expansion of 
$D_0^g(z, \bar z)$ does not contain forbidden monomials.
\end{lem}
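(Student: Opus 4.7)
The lemma is essentially a formal packaging of the coefficient calculations carried out in the paragraphs immediately preceding it, so my plan is to structure the proof as a clean bi-implication making those computations rigorous.

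For the forward direction, I would assume $(U^*, g^*)$ is a \K\ dual of $(U, g)$. By Definition \ref{dualcomplexdom}, $D_0^{g^*}(z, \bar z) = -D_0^g(z, -\bar z)$ on $U \cap U^*$. Since $D_0^{g^*}$ is a Calabi diastasis function, it is real-valued, so $(D_0^g)^*(z, \bar z) := -D_0^g(z, -\bar z)$ must be real-valued in a neighborhood of $0$. Expanding as in \eqref{procedure}, the coefficients $a^*_{IJ} = -(-1)^{|J|} a_{IJ}$ must satisfy the hermiticity condition $a^*_{JI} = \overline{a^*_{IJ}}$. Combining this with $a_{JI} = \overline{a_{IJ}}$ gives $(-1)^{|I|} = (-1)^{|J|}$ whenever $a_{IJ} \neq 0$, which is precisely the absence of forbidden monomials.

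For the reverse direction, I would suppose the expansion $D_0^g(z, \bar z) = \sum a_{IJ} z^I \bar z^J$ contains no forbidden monomials, i.e.\ $(-1)^{|I|} = (-1)^{|J|}$ for all $(I, J)$ with $a_{IJ} \neq 0$. Defining $(D_0^g)^*(z, \bar z) := -D_0^g(z, -\bar z) = \sum (-1)^{|J|+1} a_{IJ} z^I \bar z^J$, the coefficient identity together with $a_{JI} = \overline{a_{IJ}}$ forces $(D_0^g)^*$ to be real-valued on the neighborhood of $0$ where $D_0^g$ converges. One then invokes the argument already laid out before the lemma: the matrix $\frac{\partial^2 (D_0^g)^*}{\partial z_\alpha \partial \bar z_\beta}(0)$ coincides with $\frac{\partial^2 D_0^g}{\partial z_\alpha \partial \bar z_\beta}(0)$ (since only $(|I|,|J|)=(1,1)$ terms contribute and the sign is $(-1)^2 = 1$), hence is positive definite, so $\omega^* := \frac{i}{2\pi}\partial\bar\partial (D_0^g)^*$ defines a \K\ form in a neighborhood of $0$, and hence on the maximal domain of real-analytic extension $U^*$ on which it remains positive-definite. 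Let $g^*$ be the associated \K\ metric.

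It remains to verify that $(U^*, g^*)$ is a \K\ dual in the sense of Definition \ref{dualcomplexdom}, which requires that $(D_0^g)^*$ actually equals $D_0^{g^*}$. By the characterization \eqref{defdiastasis} of the diastasis, this amounts to the vanishing of the pure-$z$ and pure-$\bar z$ coefficients in $(D_0^g)^*$; but these are $(-1)^{|J|+1} a_{IJ}$ with $I = 0$ or $J = 0$, which vanish because the original $a_{I0}$ and $a_{0J}$ vanish by \eqref{defdiastasis} applied to $D_0^g$. Thus $(D_0^g)^* = D_0^{g^*}$, which together with the definition gives \eqref{diastasisstar}, so $(U^*, g^*)$ is the desired \K\ dual.

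The proof requires essentially no hard analysis: the substantive content is the bookkeeping of the coefficients $a^*_{IJ} = -(-1)^{|J|} a_{IJ}$ and their compatibility with hermiticity. The only mildly subtle point is the definition of the domain $U^*$ of the dual metric, which one handles exactly as in Remark \ref{rmkcompld} by taking the maximal open set on which $(D_0^g)^*$ real-analytically extends and the associated $(1,1)$-form remains positive-definite; this is the same maximality built into Definition \ref{exactdomain}.
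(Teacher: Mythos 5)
Your proof is correct and follows essentially the same route as the paper, which proves this lemma by exactly the coefficient bookkeeping you describe in the paragraphs preceding the statement (real-valuedness of $(D_0^g)^*$ being equivalent both to the existence of the dual and to the condition $(-1)^{|I|}=(-1)^{|J|}$ whenever $a_{IJ}\neq 0$). Your sign convention $a^*_{IJ}=-(-1)^{|J|}a_{IJ}$ is the consistent one, correcting a small sign slip in the paper's display \eqref{procedure}.
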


It is known (see \cite{ALEKSEEVSKYPERELOMOV1986coord} and references therein) that invariant complex structures on a flag manifold $F=G/K$ are in one-to-one correspondence with {\it maximal closed nonsymmetric subsets} $Q$ of the set $R_M$ (see \cite[Def.5]{LOIMOSSAZUDDAS2019bochner}) of the black roots of $G/K$.
More precisely, the manifold $G/K$ endowed with the complex structure $J_Q$ corresponding to $Q$ is biholomorphic to the complex homogeneous manifold $G^{\bC}/K^{\bC} G^{Q}$, where $G^{Q} = \exp(\mathfrak{g}^{Q})$ and $\mathfrak{g}^{Q} = \sum_{\alpha \in Q} \bC E_{\alpha}$, where $E_\alpha$ is the so called {\em root vector} associated to $\alpha$. 
Since the product $G_{reg}^{\bC} = G^{-Q} K^{\bC} G^{Q}$ (where $G^{-Q} = \exp(\mathfrak{g}^{-Q})$ and $\mathfrak{g}^{-Q} = \sum_{\alpha \in -Q} \bC E_{\alpha}$) defines an open dense subset in $G^{\bC}$, its image in $G^{\bC}/K^{\bC} G^{Q}$ via the natural projection $G^{\bC} \rightarrow G^{\bC}/K^{\bC} G^{Q}$ defines an open dense subset in $G/K$, denoted $\Omega_{reg} = G_{reg}^{\bC}/K^{\bC} G^Q$. Clearly, $\Omega_{reg} \simeq G^{-Q}$.
 Then, one can define a biholomorphism by

\begin{equation}\label{complexcoordinates}
z = (z_{\alpha})_{\alpha \in -Q} \in \bC^n \mapsto \exp(Z(z)) \in G^{-Q} \simeq \Omega_{reg} \subseteq F
\end{equation}
where
\begin{equation}\label{Z(z)}
Z(z) = \sum_{\alpha \in -Q} z_{\alpha} E_{\alpha}
\end{equation}
(where $n$ is the cardinality of $Q$). We call the corresponding   system of complex coordinates $z_1, \dots ,z_n$ on $\Omega_{reg}$,  the {\em Alekseevsky–Perelomov coordinates}.
We then define a holomorphic embedding 
\begin{equation}\label{defJF}
J_F:\C^n\rightarrow F, z\mapsto \left[\exp(Z(z))\right]
\end{equation}
with dense image $J_F(\C^n)=\Omega_{reg}$.
The embedding  $J_F$ is the natural extension  of the embedding $J:\C^n\rightarrow \Omega_c$ given in \eqref{embeddings} 
for a Hermitian symmetric space of compact type $\Omega_c$
(see  \cite[Proof of Th.7.1, Chapter VIII Section 7, p.392]{HELGASONbookDiffGeom1978}).

Given any $G$-invariant \K\ metric $g_F$ on $F$, one has
$$J_F^* \omega_F = \frac{i}{2 \pi} \partial \bar \partial \sum_{j=_1}^p c_j \log (\Delta_j)$$
where $p$ is the number of the black nodes in the painted Dynkin diagram of $G/K$, $c_j > 0$, $j= 1, \dots, p$ and the $\Delta_j$'s are suitable minors of the matrix $A = {}^T \overline{\exp(Z)} \exp(Z)$, depending on the position of the black nodes in the diagram, called {\it admissible minors}.

In  order to prove Theorem \ref{mainteor3}, namely that 
if the exact domain $(\C^n, J_F^*g_F)$ admits a \K\ dual then $(F, g_F)$
is a Hermitian symmetric space of compact type,
 we  need the following 

\begin{lem}(\cite[Th.1]{LOIMOSSAZUDDAS2019bochner})\label{LMZ}
 Let $(F, g_F)$ be an irreducible flag manifold of classical type with second Betti number
$b_2(F) = p$, 
 with associated \K\ form $\omega_F$ determined by coefficients
$c_{i_1} , \dots , c_{i_p} > 0$ associated to the black nodes $\alpha_{i_1}, \dots , \alpha_{i_p}$ of its painted diagram.
Then, the Alekseevsky–Perelomov coordinates $z_1, \dots ,z_n$ on  $(\C^n, J_F^*g_F)$ are Bochner, up to rescaling, only in
the following cases:
\begin{enumerate}
\item[(i)] $p = 1$, for every $G$ and every $\omega$;
\item[(ii)] $p = 2$, $G = SU(d)$ and $c_{i_1} = c_{i_2}$ ;
\item[(iii)] $p = 2$, $G = SO(2d)$, the painted diagram of $F$ is
\begin{align*}
&&& \underset{\substack{\alpha_1}}{\bullet} - \underset{\substack{}}{\circ} - \cdots - \underset{\substack{}}{\overset{\overset{\textstyle\bullet_{\alpha_d}}{\textstyle\vert}}{\circ}} \,-\, \underset{\substack{}}{\circ} &&
\end{align*}
and $c_1 = 2c_d$
\end{enumerate}
\end{lem}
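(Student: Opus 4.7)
The plan is to Taylor-expand the pulled-back \K\ potential
$$D(z, \bar z) \;=\; \sum_{j=1}^{p} c_{i_j} \log \Delta_{i_j}(z, \bar z)$$
at the origin of $\C^n$ in Alekseevsky--Perelomov coordinates and extract necessary conditions ensuring that $D$ is Bochner, i.e.\ that its Taylor expansion contains no monomials $z^{I}\bar z^{J}$ with $\{|I|,|J|\} = \{1,2\}$. All Bochner obstructions are cubic at leading order (once the quadratic part has been normalized to $\sum |z_\alpha|^2$ by rescaling each coordinate), so the whole analysis reduces to enumerating and comparing degree-$3$ coefficients.

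First, using $Z(z) = \sum_{\alpha \in -Q} z_\alpha E_\alpha$ together with the Chevalley relations $[E_\alpha, E_\beta] = N_{\alpha,\beta}\,E_{\alpha+\beta}$, I would expand $A(z,\bar z) = \exp(Z)^{\ast}\exp(Z)$ in $(z,\bar z)$ up to total degree three. The $(1,2)$-type contributions to an entry $A_{\mu\nu}$ come from the Baker--Campbell--Hausdorff terms involving one $E_\alpha$ and two $E_{-\beta}$'s, and are explicitly linear in the structure constants $N_{\alpha,\beta}$. Next, recall that each admissible minor $\Delta_{i_j}$ is a principal minor of $A$ indexed by a specific set $W_{i_j}$ of weights determined by the position of the black node $\alpha_{i_j}$ in the painted Dynkin diagram; expanding $\log(1 + (\Delta_{i_j}-1))$ shows that the coefficient of a monomial $z_\gamma \bar z_\alpha \bar z_\beta$ in $\log \Delta_{i_j}$ is a rational multiple of $N_{\alpha,\beta}$, nonzero precisely when $\alpha + \beta = \gamma$ and all three roots are compatible with $W_{i_j}$.

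For $p=1$, a conjugation/weight-counting argument on the single set $W_{i_1}$ pairs up $(1,2)$-type monomials with opposite signs, so they cancel identically regardless of the scaling $c_{i_1}>0$; this yields case (i). For $p\ge 2$, the vanishing of every $(1,2)$-type coefficient in $\sum_j c_{i_j}\log \Delta_{i_j}$ becomes a homogeneous linear system in $(c_{i_1},\dots,c_{i_p})$ that must admit a strictly positive solution. I would process this system painted-diagram by painted-diagram across the four classical families $A_\ell, B_\ell, C_\ell, D_\ell$. For $G = SU(d)$ with any two black nodes, the two admissible minors share exactly one orbit of $(1,2)$-type triples $(\alpha,\beta,\gamma)$ and contribute with opposite sign, producing the single equation $c_{i_1}=c_{i_2}$, giving (ii). For $G = SO(2d)$ with the forked diagram of (iii), the local combinatorics of the fork yields a multiplicity-$2$ contribution to one of the two minors, producing the relation $c_1 = 2c_d$. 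For every remaining two-node configuration in $B_\ell$, $C_\ell$, $D_\ell$, and for every $p\ge 3$ configuration in any classical type, at least two linearly independent conditions on the $c_{i_j}$'s arise, and I would show these are incompatible with any positive solution.

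The main obstacle is the final case analysis: one must enumerate the admissible minors for each painted classical Dynkin diagram, list all triples $(\alpha,\beta,\gamma)$ with $\alpha+\beta=\gamma$ lying in the relevant $W_{i_j}$, and compute the associated Chevalley structure constants $N_{\alpha,\beta}$ with correct signs and multiplicities in order to rule out all configurations outside (i)--(iii). The multiplicity-$2$ phenomenon at the fork of the $D_\ell$ diagram is precisely what isolates (iii) from a generic incompatibility, while the linearity (branchlessness) of the $A_{d-1}$ diagrams is what makes (ii) come out as a \emph{single} condition rather than an overdetermined system.
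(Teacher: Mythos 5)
First, a point of reference: the paper does not prove this lemma at all. It is imported verbatim as \cite[Th.1]{LOIMOSSAZUDDAS2019bochner}, so there is no in-paper proof to compare your proposal against; the only internal hint is the footnote stating that the cited proof exhibits, outside cases (i)--(iii), a nonvanishing monomial of type $(1,2)$. Your overall strategy (expand $\sum_j c_{i_j}\log\Delta_{i_j}$ in Alekseevsky--Perelomov coordinates and analyze the low-degree mixed terms) is therefore consistent with what the cited reference evidently does, but your text is a plan rather than a proof, and it has two genuine gaps.

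The first gap is logical. You reduce ``Bochner'' to the vanishing of the $(1,2)$-type coefficients, asserting that ``all Bochner obstructions are cubic at leading order.'' That is false as stated: Bochner coordinates require the absence of \emph{every} monomial of type $(1,k)$ with $k\geq 2$ (and conjugates), and the vanishing of the cubic $(1,2)$ block does not imply the vanishing of the $(1,3)$, $(1,4)$, \dots blocks. For the purely negative direction (ruling out configurations outside (i)--(iii)) a single surviving $(1,2)$ monomial suffices, so that half of your argument is structurally sound; but to establish that the conditions $c_{i_1}=c_{i_2}$ (case (ii)) and $c_1=2c_d$ (case (iii)) — and case (i) ``for every $G$ and every $\omega$'' — actually characterize the Bochner property, you must control all higher $(1,k)$ coefficients, and nothing in your outline does this. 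The second gap is that the entire content of the theorem lives in the case-by-case verification you explicitly defer: the claim that for $SU(d)$ any two admissible minors share exactly one orbit of $(1,2)$-triples with opposite signs, the ``multiplicity-$2$'' phenomenon at the fork of the $D_\ell$ diagram, and the incompatibility of the resulting linear systems for all remaining two-node configurations in $B_\ell, C_\ell, D_\ell$ and for all $p\geq 3$, are each asserted without computation. These are exactly the statements one would need to check against the root systems and the structure constants $N_{\alpha,\beta}$, and until they are carried out the proposal does not establish the lemma. (Minor point: the degree-$3$ terms of $A={}^{T}\overline{\exp(Z)}\exp(Z)$ come directly from the matrix product of the nilpotent exponentials, e.g. from $\tfrac12\,{}^{T}\bar Z^{2}Z$ and ${}^{T}\bar Z\,Z$-type blocks as in Lemma \ref{lemmaforbidden} of this paper; no Baker--Campbell--Hausdorff manipulation is involved.)
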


Let us recall that the  Bochner coordinates 
are those complex coordinates $w_1, \dots ,w_n$ in a neighbourhood of 
the origin (uniquely defined up to a unitary transformation) such that

$$D_0^{J_F^*g_F}(w, \bar w) = |w|^2 + \sum_{|J|, |K| \geq 2} b_{JK} w^J \bar w^K$$

Notice that to prove Theorem \ref{mainteor3} we can assume 
$(F, g_F)$ is irreducible. Indeed if  $F$ is not irreducible, then its painted Dynkin diagram is given by the disjoint union of (connected) painted Dynkin diagrams of simple groups, and the coordinates are  Bochner if and only if are the coordinates on each factor.

Thus Lemma \ref{LMZ} implies  that, except for the cases (i)-(iii), the expansion of  $D_0^{J_F^*g_F}(z, \bar z)$ with respect to  Alekseevsky–Perelomov coordinates  contains forbidden monomials\footnote{In fact, in the proof of \cite[Th.1]{LOIMOSSAZUDDAS2019bochner})
it is proven  that, except for cases (i)-(iii), the expansion of the diastasis always contains forbidden monomial of type $(1, 2)$.}
 of the kind $z_{i_1} \bar z_{i_2} \cdots \bar z_{i_k}$ (or their conjugates).

Then,  in order to analyze when $(\C^n, J_F^*g_F)$  can admit a \K\ dual, we just need to consider cases (i)-(iii).

In fact, in both cases (ii) and (iii), the proof of  Lemma \ref{LMZ}
(see \cite[cases 3.2.1 and 3.2.2 in Th.1]{{LOIMOSSAZUDDAS2019bochner}}) shows that the diastasis is of the kind
\begin{equation}\label{delta12}
D_0^{J_F^*g_F}= c_1 \log \Delta_1 + c_2 \log \Delta_2, \ c_1, c_2\in\R^+
\end{equation}
where both $\Delta_1$ and $\Delta_2$ contain forbidden monomials.
We are then left with considering only case (i).

When $(F, g_F)$ is not symmetric, we are going to show that the admissible minor always contains forbidden monomials of type $(2,3)$
and then the  proof of Theorem \ref{mainteor3} will  follow by Lemma \ref{dualiffnotforb}. In order to do that, we pause to prove   the following

\begin{lem}\label{lemmaforbidden}
Let $G = SU(n), Sp(n), SO(2n), SO(2n+1)$ and let $(F=G/K, g_F)$ be a nonsymmetric flag manifold represented by a painted diagram with only one black node with corresponding admissible minor $\Delta_r$.
Then the only forbidden monomials of types $(2,3)$ contained in $\Delta_r$ are of one of the following kinds

\begin{equation}\label{forb1}
-\frac{1}{2} Z_{\gamma i} Z_{\alpha j} \bar Z_{\alpha i} \bar Z_{\beta j} \bar Z_{\gamma \beta}
\end{equation}
\begin{equation}\label{forb2}
+\frac{1}{2} Z_{\alpha i} Z_{\gamma j} \bar Z_{\alpha i} \bar Z_{\beta j} \bar Z_{\gamma \beta}
\end{equation}
where $1 \leq i, j \leq r$ and $\alpha, \beta, \gamma > r$ and $Z_{\delta\epsilon}:=Z_{\delta\epsilon}(z)$.
\end{lem}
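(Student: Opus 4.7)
The plan is to expand the admissible minor $\Delta_r$ directly, collect the bidegree-$(2,3)$ part in $z,\bar z$, and then show by direct index analysis that the only surviving monomials are those of \eqref{forb1} and \eqref{forb2}. A bidegree-$(2,3)$ monomial in $\Delta_r$ consists of two entries of $Z$ (each linear in $z$) and three entries of $\bar Z$ (each linear in $\bar z$), so the problem reduces to enumerating the five-fold index patterns that actually appear after carrying out the determinantal expansion.

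First I would expand $\exp Z = I + Z + \tfrac12 Z^2 + \tfrac16 Z^3 + \cdots$ and use
$$A \;=\; {}^T\overline{\exp Z}\,\exp Z \;=\; \sum_{a,b\ge 0}\frac{{}^T\bar Z^{a}\,Z^b}{a!\,b!},$$
so that the bidegree-$(p,q)$ part of $A_{ij}$ reads
$$A_{ij}^{(p,q)} \;=\; \frac{1}{p!\,q!}\sum_k \overline{(Z^q)_{ki}}\,(Z^p)_{kj}.$$
Setting $N_{ij}=A_{ij}-\delta_{ij}$ on the principal $r\times r$ block, the standard expansion
$$\det(I_r+N)=1+\operatorname{tr} N + \tfrac12\bigl[(\operatorname{tr} N)^2-\operatorname{tr}(N^2)\bigr] + \tfrac16\bigl[(\operatorname{tr} N)^3-3(\operatorname{tr} N)\operatorname{tr}(N^2)+2\operatorname{tr}(N^3)\bigr]+\cdots$$
isolates the bidegree-$(2,3)$ part of $\Delta_r$ as a sum of products of bidegree-$(p,q)$ pieces of the $N_{ij}$ whose bidegrees add to $(2,3)$: a single $N_{ii}^{(2,3)}$, products $N_{ii}^{(p,q)}N_{jj}^{(2-p,3-q)}$ or $-N_{ij}^{(p,q)}N_{ji}^{(2-p,3-q)}$ from the $\operatorname{tr}(N^2)$ term, and cubic products coming from $\operatorname{tr}(N^3)$ and $(\operatorname{tr} N)^3$.

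Next, I would use the block/symmetry structure of $Z=\sum_{\alpha\in-Q}z_\alpha E_\alpha$. Since $i,j\le r$, every chain of summation indices appearing in $(Z^p)_{kj}$ or $\overline{(Z^q)_{ki}}$ must traverse indices $>r$ in order to end at an \lq\lq upper\rq\rq\ position $\le r$, which kills most candidate monomials. For $G=SU(n)$ a single black node yields a Grassmannian (symmetric, hence outside the nonsymmetric hypothesis), whereas for $G=Sp(n),SO(2n),SO(2n+1)$ the skew/symmetry constraints force additional \lq\lq lower-right\rq\rq\ entries of $Z$, and it is precisely these entries that make a factor $\bar Z_{\gamma\beta}$ with $\gamma,\beta>r$ nonzero. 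Matching the surviving index patterns against \eqref{forb1}--\eqref{forb2} and assembling the combinatorial factors $1/(p!\,q!)$ with the sign coming from $-\tfrac12\operatorname{tr}(N^2)$ should produce exactly the two displayed families with coefficients $\mp\tfrac12$.

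The main obstacle is the combinatorial bookkeeping: after eliminating most candidates via the block structure, one must verify that all remaining bidegree-$(2,3)$ products in $\det(I_r+N)$ collapse --- via cancellations and summation over the bridging indices $>r$ --- to exactly the two families \eqref{forb1}--\eqref{forb2} with the stated coefficients $\pm\tfrac12$. I expect this to require a case-by-case inspection of $B_n,C_n,D_n$, writing $Z$ explicitly as a matrix in each type and enumerating the products of root vectors $E_\alpha$ that connect a pair of \lq\lq upper\rq\rq\ indices $i,j\le r$ through intermediate \lq\lq lower\rq\rq\ indices $>r$ together with the additional symmetry-imposed entries.
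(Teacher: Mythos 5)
Your overall strategy --- expand $A={}^T\overline{\exp Z}\,\exp Z$, expand the $r\times r$ minor, and collect the bidegree-$(2,3)$ part --- is the same as the paper's, but the decisive ingredient that makes this enumeration finite and tractable is missing from your plan: under the hypotheses one has $Z^3=0$ (verified case by case for the diagrams in question), so that $\exp Z=I+Z+\tfrac12 Z^2$ terminates. Without this, your general formula $A_{ij}^{(p,q)}=\tfrac{1}{p!q!}\sum_k\overline{(Z^q)_{ki}}(Z^p)_{kj}$ admits a genuine $(2,3)$ candidate already in the linear term $\operatorname{tr}N$, namely $\tfrac{1}{12}\sum_k\overline{(Z^3)_{ki}}(Z^2)_{ki}$, and the ``rows $\le r$ of $Z$ vanish'' observation does not kill it (the chain $Z_{ka}Z_{ab}Z_{bi}$ with $k,a,b>r$, $i\le r$ is not excluded by the block structure alone). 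You also never pin down the second structural fact the paper relies on: not only is $Z_{ij}=0$ for $i,j\le r$, but $Z_{i\delta}=0$ for every $i\le r$ and every $\delta$; it is precisely this that annihilates the two remaining candidates of the form $({}^T\bar Z^2)_{ij}({}^T\bar Z Z^2)_{ji}$ and its diagonal analogue, leaving only \eqref{forb1} and \eqref{forb2}.

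Beyond that, your write-up stops at the point where the actual work begins: you say the surviving index patterns ``should produce exactly the two displayed families'' and that you ``expect this to require a case-by-case inspection of $B_n,C_n,D_n$,'' but you neither carry out that inspection nor list the finite set of candidate two-entry products and show which vanish. The paper's proof is short precisely because, once $Z^3=0$ and the vanishing of the rows indexed $\le r$ are in hand, the Leibniz expansion of $\Delta_r$ leaves exactly four candidate products, two of which die for the row-vanishing reason. As written, your proposal is a correct outline of the same route but with the two key structural lemmas unproved and the terminal enumeration deferred, so it does not yet constitute a proof.
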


\begin{proof}
It can be verified by a case-by-case analysis that, under the assumptions, we always have $Z^3 = 0$ and then $\exp(Z) = I + Z + \frac{1}{2}Z^2$. Then, by denoting  $A = {}^T \overline{\exp(Z)} \exp(Z)$, we have

$$A = I + (Z + {}^T \bar Z) + (\frac{1}{2} Z^2 + {}^T \bar Z Z + \frac{1}{2} {}^T \bar Z^2) + (\frac{1}{2} {}^T \bar Z Z^2 + \frac{1}{2} {}^T \bar Z^2 Z) + \frac{1}{4} {}^T \bar Z^2 Z^2$$
Now, by this equality, the very definition of determinant 
$$\Delta_r(A) = \sum_{\sigma \in S_r} s(\sigma) A_{1 \sigma(1)} A_{2 \sigma(2)} \cdots A_{r \sigma(r)}$$ and the fact that $Z_{ij} = 0$ for $i, j \leq r$ we get that a forbidden monomial of type $(2,3)$ can occur in $\Delta_r(A)$ only as one of the following addenda:

\begin{equation}\label{forbdim1}
(\frac{1}{2} {}^T \bar Z^2)_{ij} (\frac{1}{2} {}^T \bar Z Z^2)_{ji} = -\frac{1}{4} Z_{\gamma i} Z_{\beta \gamma} \bar Z_{\alpha i} \bar Z_{j \alpha} \bar Z_{\beta j}
\end{equation}
\begin{equation}\label{forbdim2}
(\frac{1}{2} {}^T \bar Z^2)_{ii} (\frac{1}{2} {}^T \bar Z Z^2)_{jj} = +\frac{1}{4} Z_{\gamma j} Z_{\beta \gamma} \bar Z_{\alpha i} \bar Z_{i \alpha} \bar Z_{\beta j}
\end{equation}
\begin{equation}\label{forbdim3}
( {}^T \bar Z Z)_{ij} (\frac{1}{2} {}^T \bar Z^2 Z)_{ji} = -\frac{1}{2} Z_{\gamma i} Z_{\alpha j} \bar Z_{\alpha i} \bar Z_{\beta j} \bar Z_{\gamma \beta}
\end{equation}
\begin{equation}\label{forbdim4}
( {}^T \bar Z Z)_{ii} (\frac{1}{2} {}^T \bar Z^2 Z)_{jj} = +\frac{1}{2} Z_{\alpha i} Z_{\gamma j} \bar Z_{\alpha i} \bar Z_{\beta j} \bar Z_{\gamma \beta}
\end{equation}

Monomials of kind (\ref{forbdim1}) and (\ref{forbdim2}) are in fact zero because $\bar Z_{j \alpha}$ and $\bar Z_{i \alpha}$ vanish for $i, j \leq r$ under the assumptions.
Then only monomials of kind (\ref{forbdim3}) and (\ref{forbdim4}) are left, which proves the lemma.
\end{proof}

Now, by using the Lemma \ref{lemmaforbidden} it is not hard to show that for nonsymmetric flag manifolds represented by a painted diagram with only one black node the corresponding minor $\Delta_r$ always contains a forbidden monomial of type  $(2,3)$.

More precisely, one has to consider the following cases.

\begin{enumerate}
\item[(1)] $G= Sp(n)$, diagram
\begin{align*}
&&& \underset{\substack{\varepsilon_1 - \varepsilon_2}}{\circ} - \cdots - \underset{\substack{\varepsilon_r - \varepsilon_{r+1}}}{\bullet} - \cdots -  \underset{\substack{\varepsilon_{n-1} - \varepsilon_n}}{\circ} \Leftarrow \underset{\substack{2 \varepsilon_n}}{\circ} &&
\end{align*}
with $1 < r \leq n-1$.
(notice that for $r=1$ we get the symmetric space $\frac{Sp(n)}{Sp(1) \times Sp(n-1)} = \Hyp P^{n-1} \simeq \C P^{2n - 1}$).

\item[(2)] $G= SO(2n+1)$, diagram
\begin{align*}
&&& \underset{\substack{\varepsilon_1 - \varepsilon_2}}{\circ} - \cdots - \underset{\substack{\varepsilon_r - \varepsilon_{r+1}}}{\bullet} - \cdots -  \underset{\substack{\varepsilon_{n-1} - \varepsilon_n}}{\circ} \Rightarrow \underset{\substack{2 \varepsilon_n}}{\circ} &&
\end{align*}
with $1 < r \leq n-1$.

\item[(3)] $G= SO(2n)$, diagram
\begin{align*}
&&& \underset{\substack{\varepsilon_1 - \varepsilon_2}}{\circ} - \cdots - \underset{\substack{\varepsilon_r - \varepsilon_{r+1}}}{\bullet} - \cdots - \underset{\substack{\varepsilon_{n-2} - \varepsilon_{n-1}}}{\overset{\overset{\textstyle\circ_{\varepsilon_{n-1} + \varepsilon_n}}{\textstyle\vert}}{\circ}} \,-\, \underset{\substack{\varepsilon_{n-1} - \varepsilon_n}}{\circ} &&
\end{align*}
with $1 < r \leq n-1$.
\end{enumerate}

Then, one sees that in all these cases the forbidden monomial of type (2,3)
\begin{equation}\label{monomial3cases}
\frac{1}{2} Z_{n 1} Z_{n+1, 2} \bar Z_{n 1} \bar Z_{2n, 2} \bar Z_{n+1, 2n}
\end{equation}
is contained in $\Delta_r$.

Indeed, this monomial is of kind (\ref{forb2}) with $\alpha = n, i =1, \gamma = n+1, j=2, \beta = 2n$ so is contained in $\Delta_r$ by Lemma \ref{lemmaforbidden}. Then,  by a case-by-case analysis and checking all the possibilities for $\alpha, i, \gamma, j, \beta$ it is not hard to see that this monomial does not simplify with other monomials either of kind  (\ref{forb1})  or of kind  (\ref{forb2}) contained in $\Delta_r$. The proof of Theorem \ref{mainteor3} is complete.

\begin{remark}\rm
Note that for $c_1=c_2$ the metric $J_F^*g_F$ on $\C^n$ whose diastasis is given by \eqref{delta12} is KE and hence
one gets examples of exact domains $(\C^n,J_F^*g_F)$ even KE which have a Fubini-Study compactification and  not admitting a \K\ dual. In order to give an explicit example consider  the flag manifold $F=\frac{SU(3)}{S(U(1)^3)}$ of complex dimension $3$. In this case the general homogeneous \K\ metric $g_F$ is given on $\C^3$ by
$$D_0^{J_F^*g_F} = c_1 \log \Delta_1 + c_2  \log \Delta_2,\  c_1, c_2\in\R^+, $$
where  

$$\Delta_1= \log \left[ 1 + |z_1|^2 + |z_2|^2 + \frac{|z_1|^2 |z_3|^2}{4} + \frac{z_2 \bar z_1 \bar z_3}{2} + \frac{\bar z_2 z_1 z_3}{2} \right]$$
and 
$$\Delta_2=\log \left[ 1 + |z_2|^2 + |z_3|^2 + \frac{|z_1|^2 |z_3|^2}{4} - \frac{z_2 \bar z_1 \bar z_3}{2} - \frac{\bar z_2 z_1 z_3}{2} \right]$$
and one can directly check that for $c_1=c_2$ the exact domain $(\C^3, J_F^*g_F)$ has a Fubini-Study compactification given by  $(F, g_F)$, $g_F$ is KE but $(\C^3, J_F^*g_F)$ does not admits a \K\ dual.
\end{remark}

\section{A conjecture}\label{conjecture}

In light of the Theorem \ref{mainteor2}, one might ask   which additional  properties on the metric $g$ 
in Question A would yield a positive answer.
We believe this extra property is the requirement that the metric $g$ is KE.
This is expressed by the following
 
 \vskip 0.3cm
\noindent
{\bf Conjecture A.} {\em Let $(U, g)$ be  an exact domain 
admitting  a \K\ dual $(U^*, g^*)$.
Assume that  there exists $\alpha>0$ such that: 
\begin{itemize}
\item [(a)] 
$(U, \alpha g)$ has a Fubini-Study completion;
\item [(b)]
$(U^*, \alpha g^*)$ has a Fubini-Study compactification.
\end{itemize}
If the metric $g$ is KE, then 
$(U, g)$ is  biholomorphically isometric to  a bounded symmetric  domain $(\Omega, g_\Omega)$.
}

 \vskip 0.3cm
 
We will need the following result, interesting in its own right.
 
 \begin{lem}\label{dualKE}
 Let $(U, g)$ be  an exact domain 
admitting  a \K\ dual $(U^*, g^*)$.
If $g$ is KE with Einstein constant $\lambda$
then $g^*$ is KE with Einstein constant $\lambda^*=-\lambda$.
 \end{lem}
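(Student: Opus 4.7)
\medskip

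\textbf{Proof plan.} The plan is to reduce the statement to a direct calculation using the fact that the Kähler--Einstein condition can be written as a local identity between $\partial\bar\partial$ of the log determinant of the metric and $\partial\bar\partial$ of any (local) Kähler potential. Throughout I work with the diastasis $\Phi:=D_0^g$ on $U$ and $\Phi^*:=D_0^{g^*}$ on $U^*$, on the open neighbourhood of the origin where both are defined; the conclusion then propagates to all of $U^*$ by real analyticity of the Ricci tensor.

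First I would record the KE condition for $g$ in the form
\begin{equation*}
\frac{\partial^2}{\partial z_\alpha\,\partial\bar z_\beta}\log\det\!\left(\Phi_{\gamma\bar\delta}(z,\bar z)\right)\;=\;-\lambda\,\Phi_{\alpha\bar\beta}(z,\bar z),
\end{equation*}
which is equivalent to $\mathrm{Ric}(\omega)=\lambda\omega$. Next I would compute the dual metric coefficients from $\Phi^*(z,\bar z)=-\Phi(z,-\bar z)$: the antiholomorphic chain rule gives
\begin{equation*}
\frac{\partial\Phi^*}{\partial\bar z_\beta}(z,\bar z)=\frac{\partial\Phi}{\partial\bar z_\beta}(z,-\bar z),\qquad g^*_{\alpha\bar\beta}(z,\bar z)=\Phi^*_{\alpha\bar\beta}(z,\bar z)=\Phi_{\alpha\bar\beta}(z,-\bar z)=g_{\alpha\bar\beta}(z,-\bar z),
\end{equation*}
so in particular $\det(g^*_{\gamma\bar\delta})(z,\bar z)=\det(g_{\gamma\bar\delta})(z,-\bar z)$ and therefore $\log\det g^*(z,\bar z)=\log\det g(z,-\bar z)$.

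The core step is then to apply $\partial_{z_\alpha}\partial_{\bar z_\beta}$ to the relation $\log\det g^*(z,\bar z)=\log\det g(z,-\bar z)$ and combine with the KE equation for $g$. Using the chain rule twice (the $\bar z_\beta$ derivative produces one sign from the substitution $\bar z\mapsto -\bar z$):
\begin{equation*}
\frac{\partial^2}{\partial z_\alpha\,\partial\bar z_\beta}\log\det g^*(z,\bar z)\;=\;-\,\frac{\partial^2}{\partial z_\alpha\,\partial\bar z_\beta}\log\det g\,\Big|_{(z,-\bar z)}\;=\;\lambda\,g_{\alpha\bar\beta}(z,-\bar z)\;=\;\lambda\,g^*_{\alpha\bar\beta}(z,\bar z).
\end{equation*}
Since $\mathrm{Ric}(\omega^*)_{\alpha\bar\beta}=-\partial_{z_\alpha}\partial_{\bar z_\beta}\log\det g^*$, this yields $\mathrm{Ric}(\omega^*)=-\lambda\,\omega^*$, which is the desired KE equation with Einstein constant $\lambda^*=-\lambda$.

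The calculation above is valid on the (connected) neighbourhood of the origin on which $\Phi$, $\Phi^*$ and the substitution $\bar z\mapsto -\bar z$ all make sense; to conclude I would invoke real analyticity of both $g^*$ and its Ricci form on $U^*$ (both sides of $\mathrm{Ric}(\omega^*)+\lambda\omega^*=0$ are real-analytic tensors on the connected domain $U^*$ and vanish on an open set), so the identity propagates globally. The only technical point worth double-checking is the bookkeeping of signs in the antiholomorphic chain rule, which is the source of the flip $\lambda\mapsto -\lambda$; no deeper obstacle arises.
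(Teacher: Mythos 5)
Your proof is correct and follows essentially the same route as the paper: the paper's proof simply asserts that ``a direct computation'' yields the relation $\rho_g(z,\bar z)=-\rho_{g^*}(z,-\bar z)$ between the Ricci forms, and your argument is exactly that computation carried out explicitly (your identity $\partial_{z_\alpha}\partial_{\bar z_\beta}\log\det g^*(z,\bar z)=-\bigl(\partial_{z_\alpha}\partial_{\bar z_\beta}\log\det g\bigr)(z,-\bar z)$ is the coefficient form of the paper's relation). The sign bookkeeping in the antiholomorphic chain rule and the real-analytic continuation to all of $U^*$ are both handled correctly.
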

 \begin{proof}
 A direct computation of the Ricci forms $\rho_g$ and $\rho_{g^*}$ for  $g$ and $g^*$ show that they are  related by  
\begin{equation}\label{ricciforms}
\rho_g(z, \bar z)=-\rho_{g^*}(z, -\bar z)
\end{equation}
and the proof easily follows.
 \end{proof}
 
 \begin{remark}\rm\label{dualextr}
From \eqref{ricciforms} one  sees  that the scalar curvatures $\scal_g$ and  $\scal_{g^*}$ 
of the two metrics satisfy  
$$\scal_g(z, \bar z)=-\scal_{g^*}(z, -\bar z).$$
Thus, one deduces that the metric $g$ is extremal (in Calabi's sense\footnote{A metric $g$
is extremal if  the  $(1, 0)$-part of the Hamiltonian vector field associated to the scalar curvature of $g$ is holomorphic.}
\cite{CALABI1982ext}), iff 
$g^*$ is extremal.  
By this fact and since  a finitely projectively induced extremal metric is conjecturally  KE (see  \cite[Conjecture 1]{LOISALISZUDDAS2021extremal} and 
\cite[Th,1.1]{LOISALISZUDDAS2021extremal} for the proof of the validity of this conjecture  for  radial metrics), we believe that the assumption that $g$ is KE in Conjecture A can be be weakened by requiring only that $g$ be extremal.
\end{remark}
 
When $(U, g)=(M_{\Omega, \mu}, g_{\Omega, \mu})$, the following result shows
that  Conjecture A it is true by weakening  condition (b), namely by only requiring the dual \K\  metric $\alpha g_{\Omega, \mu}^*$ 
on $\C^{n+1}$ to be  projectively induced and the KE assumption by requiring $g$ is extremal.

\begin{cor}\label{maincor}
The \K\ metric  $\alpha g^*_{\Omega, \mu}$ is 
projectively induced and extremal  iff  
$M_{\Omega, \mu}=\C H^{n+1}$, $g_{\Omega, \mu}=g_{hyp}$,  and $\alpha\in\Z^+$.
\end{cor}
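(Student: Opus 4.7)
For the $(\Leftarrow)$ direction, when $\Omega=\C H^n$ and $\mu=1$, the Cartan-Hartogs domain reduces to the ball $\C H^{n+1}$ with $g_{\Omega,\mu}=g_{hyp}$; equation \eqref{eqdualKob} then gives $\wk_{\Omega,\mu}^*=\frac{i}{2\pi}\de\deb\log(1+|z|^2+|w|^2)$, so $g^*_{\Omega,\mu}$ is the Fubini-Study metric on the affine chart $\C^{n+1}\subset\C P^{n+1}$. For every $\alpha\in\Z^+$, Calabi's Veronese map $V_\alpha$ of \cite{CALABI1953diast} makes $\alpha g_{FS}$ finitely projectively induced, and since $g_{FS}$ is KE it is in particular extremal.

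For the $(\Rightarrow)$ direction I would argue in four steps. Step~1: Proposition \ref{propproj} applied to the projective inducibility of $\alpha g^*_{\Omega,\mu}$ gives $\alpha,\mu\in\Z^+$. Step~2: Remark \ref{dualextr} transfers the extremality of $g^*_{\Omega,\mu}$ to the extremality of its \K\ dual $g_{\Omega,\mu}$ on $M_{\Omega,\mu}$. Step~3 (crucial): invoke the structural fact that on a Cartan-Hartogs domain, extremality of $g_{\Omega,\mu}$ already forces the metric to be KE, and so by Theorem~C one has $\mu=\gamma/(n+1)$. Step~4: combine $\mu\in\Z^+$ with $\mu(n+1)=\gamma$, substituting $\gamma=(r-1)a+b+2$ and $n=r+r(r-1)a/2+rb$, to obtain
\[
\mu\bigl[r+r(r-1)a/2+rb+1\bigr]=(r-1)a+b+2.
\]
For $r=1$ this collapses to $\mu(b+2)=b+2$, hence $\mu=1$. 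For $r\geq 2$, since $\mu\geq 1$ and $a,b\geq 1$, one has
\[
\mathrm{LHS}-\mathrm{RHS}\;\geq\;(r-1)\bigl[1+(r-2)a/2+b\bigr]\;\geq\;1+b\;>\;0,
\]
ruling this case out. Hence $\Omega$ has rank one and is biholomorphically isometric to $\C H^n$, so $M_{\Omega,\mu}=\C H^{n+1}$ and $g_{\Omega,\mu}=g_{hyp}$, as required.

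The principal obstacle is Step~3: showing that on a CH domain, extremality already implies KE. This is a particular case of the conjecture recalled in Remark \ref{dualextr}, which is proved only for radial metrics; the generic norm $N_\Omega$ is truly radial only in the ball case, so the radial result cannot be applied directly. For the CH setting however one can argue intrinsically: the potential $-\log(N_\Omega^\mu-|w|^2)$, together with the $K$-invariance of $N_\Omega$ at the origin and the radial dependence on the fiber variable $w$, reduces the Calabi extremality condition $\overline{\partial}\,\mathrm{grad}^{1,0}\scal_{g_{\Omega,\mu}}=0$ to a single-variable algebraic identity in $t=|w|^2/N_\Omega^\mu(z,\bar z)$; solvability of this identity selects exactly the KE value $\mu=\gamma/(n+1)$ and simultaneously forces $\scal_{g_{\Omega,\mu}}$ to be constant. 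The remaining steps are essentially bookkeeping, but the reduction in Step~3 is where the rigidity of the whole statement is concentrated.
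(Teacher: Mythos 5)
Your argument follows essentially the same route as the paper: transfer extremality from $\alpha g^*_{\Omega,\mu}$ to $g_{\Omega,\mu}$ via Remark \ref{dualextr}, upgrade extremal to KE, invoke Theorem C to get $\mu=\gamma/(n+1)$, and combine with $\mu\in\Z^+$ from Proposition \ref{propproj} to force rank one. The one step you flag as the principal obstacle --- that extremality of $g_{\Omega,\mu}$ on a CH domain already forces it to be KE --- is exactly where the paper does not argue from scratch but cites \cite[Th.1]{ZEDDA2012CanonicalMetricCH}; your sketch of a reduction to a single-variable identity in $t=|w|^2/N_\Omega^\mu$ is the right idea (and is in the spirit of how that theorem is proved), but as written it is a plan rather than a proof, so you should either carry out that computation or simply cite Zedda's result. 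Your Step~4 is a more explicit, elementary substitute for the paper's inequality $\gamma/(n+1)\leq 1$ with equality iff $\Omega=\C H^n$; it checks out, with the minor caveat that $b$ can be zero for some Cartan domains, which does not affect your lower bound since $r-1\geq 1$ already gives $\mathrm{LHS}-\mathrm{RHS}\geq (r-1)\bigl[1+(r-2)a/2+b\bigr]\geq 1$.
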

\begin{proof}
If $g^*_{\Omega, \mu}$ is extremal then by Remark \ref{dualextr} also $g_{\Omega, \mu}$
is extremal.  By \cite[Th.1]{ZEDDA2012CanonicalMetricCH} $g_{\Omega, \mu}$ is forced to be KE
and hence, by Theorem C  in the introduction,  $\mu=\frac{\gamma}{n+1}$,  where $\gamma$
is the genus of $\Omega$.
On the other hand,  it is easily seen that  
\begin{equation}\label{ingamma}
\frac{\gamma}{n+1}\leq 1
\end{equation}
 and equality  holds 
iff $\Omega =\C H^n$ and $\mu=1$
(inequality \eqref{ingamma} can be deduced by looking, for example,  at the  the table in  \cite[pag. 17]{ARAZYbookAsurveyOfInvariant})). 
Hence,  the proof follows  by Proposition \ref{propproj}.
\end{proof}

\vskip 0.3cm

\noindent
{\bf On the necessity of  the KE assumption  in Conjecture A.}
The  following proposition combined  with 
Theorem \ref{mainteor2} shows 
the necessity of  the KE assumption  in Conjecture A.
\begin{prop}
The metric $\hat g_{\Omega, \mu}$ on the CH domain $M_{\Omega, \mu}$ is not  
KE for any value of $\mu>0$.
\end{prop}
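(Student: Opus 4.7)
The plan is to assume $\hat g_{\Omega, \mu}$ is K\"ahler-Einstein with some Einstein constant $\lambda$ and derive a contradiction by computing the ratio $R_{w\bar w}/(\hat g_{\Omega,\mu})_{w\bar w}$ at points $(0, w_0) \in M_{\Omega, \mu}$ on the $w$-axis, showing that it is a non-constant function of $|w_0|^2$.

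Working in Harish-Chandra coordinates on the Cartan domain $\Omega$, one has $N_\Omega(z, \bar z) = 1 - |z|^2 + O(|z|^4)$, whence $N_\Omega(0) = 1$, $(\partial_i N_\Omega)(0) = 0$, and $(g_\Omega)_{i\bar j}(0) = \delta_{ij}$. Unpacking the potential $\hat\Phi = -\log(N^\mu_\Omega - |w|^2) - \mu \log N_\Omega$ at $(0, w_0)$, the key simplification is that the mixed block $(\hat g_{\Omega,\mu})_{i\bar w}$ vanishes at $z = 0$ (every such term is proportional to $N_i(0) = 0$), yielding
\begin{equation*}
\hat g_{\Omega, \mu}(0, w_0) \;=\; \frac{\mu(2-|w_0|^2)}{1-|w_0|^2}\, I_n \,\oplus\, \frac{1}{(1-|w_0|^2)^2}.
\end{equation*}
Consequently, along the $w$-axis, $\log \det \hat g_{\Omega,\mu}|_{z=0}(w) = \mathrm{const} + n\log(2-|w|^2) - (n+2)\log(1-|w|^2)$.

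Since $\partial_w \partial_{\bar w}$ commutes with restriction to $\{z=0\}$ and $\partial_w\partial_{\bar w}\log(c-|w|^2) = -c/(c-|w|^2)^2$, the Ricci component at $(0, w_0)$ is
\begin{equation*}
R_{w\bar w}(0, w_0) \;=\; -\partial_w\partial_{\bar w}\log\det \hat g_{\Omega,\mu}\Big|_{(0, w_0)} \;=\; \frac{2n}{(2-|w_0|^2)^2} - \frac{n+2}{(1-|w_0|^2)^2}.
\end{equation*}
Combined with $(\hat g_{\Omega, \mu})_{w\bar w}(0, w_0) = 1/(1-|w_0|^2)^2$, the KE condition forces
\begin{equation*}
\lambda \;=\; \lambda(t) \;=\; \frac{2n(1-t)^2}{(2-t)^2} - (n+2), \qquad t := |w_0|^2 \in [0, 1).
\end{equation*}
A direct computation gives $\lambda'(t) = -4n(1-t)/(2-t)^3 < 0$, so $\lambda$ is strictly decreasing on $[0,1)$, hence non-constant. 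This contradicts the KE hypothesis and holds for every Cartan domain $\Omega$ and every $\mu > 0$.

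The main obstacle is the explicit computation of $g_{\Omega,\mu}$ at $(0, w_0)$ from its potential $-\log(N^\mu_\Omega - |w|^2)$, but this is dramatically simplified by $(\partial_i N_\Omega)(0) = 0$: all terms proportional to $N_i$ (in particular the mixed $(i, \bar w)$-block and the rank-one outer-product correction to the $z$-block) drop out at $z = 0$, leaving only the simple diagonal contributions used above. No structural properties of $\Omega$ beyond the local normalization at the origin enter the argument.
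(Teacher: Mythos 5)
Your proof is correct, and it takes a genuinely different route from the paper's. The paper invokes \cite[Lemma 5]{WANGYINZHANGROSS2006KEonHartogs}, which characterizes the KE metrics on $M_{\Omega,\mu}$ (with Einstein constant $-(n+2)$) as those whose potential has the form $h(X)-\frac{\gamma+\mu}{n+2}\log N_\Omega$, $X=|w|^2N_\Omega^{-\mu}$, with $h$ solving a specific ODE; the authors check that a suitable rescaling of $\hat g_{\Omega,\mu}$ has a potential of the required shape with $h(X)=-c\log(1-X)$, and that this $h$ fails the ODE. You instead perform a direct, self-contained curvature computation: the vanishing of $\partial_iN_\Omega$ at the origin makes $\hat g_{\Omega,\mu}$ block diagonal along the fiber $\{z=0\}$, so $\det\hat g_{\Omega,\mu}|_{z=0}$ is explicitly computable, and the ratio $R_{w\bar w}/(\hat g_{\Omega,\mu})_{w\bar w}=2n(1-t)^2/(2-t)^2-(n+2)$ is visibly non-constant in $t=|w_0|^2$. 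I checked the individual steps (the entries of the metric at $(0,w_0)$, the interchange of $\partial_w\partial_{\bar w}$ with restriction to $z=0$, the formula $R_{w\bar w}=-\partial_w\partial_{\bar w}\log\det\hat g$, and the monotonicity of $\lambda(t)$) and they are all sound; the argument is also insensitive to the normalization of the Harish--Chandra coordinates, since a constant factor in the $z$-block only shifts $\log\det$ by a constant. What your approach buys is elementary self-containedness and the fact that it rules out \emph{every} Einstein constant at once, without needing the external ODE characterization (which is stated for one normalized Einstein constant and forces a rescaling step in the paper); what the paper's approach buys is brevity by leveraging existing machinery specific to Cartan--Hartogs domains.
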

\begin{proof}
We use 
\cite[Lemma 5]{WANGYINZHANGROSS2006KEonHartogs} which asserts that 
a \K\  metric $g$ with associated \K\  form   $\omega=\frac{i}{2\pi}\partial\bar\partial \Phi$ on the CH domain
$M_{\Omega, \mu}$ is KE, with Einstein constant $-(n+2)$,  if and only if 

\begin{equation}\label{potentialtype}
\Phi(z, w)=h(X) -\frac{\gamma + \mu}{n+ 2} \log N_\Omega(z, \bar z),
\end{equation}
where $X = \frac{|w|^2}{N_\Omega(z, \bar z)^{\mu}}$ and $h$ satisfies the differential equation

\begin{equation}\label{ODE}
\left( \mu X h'(X) + \frac{\gamma + \mu}{d + 2} \right)^d [X h'(X)]' = k e^{(n+2)h(X)},
\end{equation}
for some $k \in \R$.

Now, on the one hand, 
it is easily seen that  
the \K\ potential
$$\Phi (z, w):=-\frac{\gamma + \mu}{(\mu+1)(d+2)}
\left[\log \left( N_\Omega(z, \bar z)^{\mu} - |w|^2\right)+\log N_\Omega(z, \bar z)\right]$$
for the metric $-\frac{\gamma + \mu}{(\mu+1)(d+2)}\hat g_{\Omega, \mu}$
satisfies (\ref{potentialtype})  with 
$$
h(X) := - \frac{\gamma + \mu}{(\mu+1)(d+2)}  \log(1 - X).
$$
On the other hand,  a simple computation shows that the  latter does not satisfy 
\eqref{ODE}, for any value of $k$.
\end{proof}

\vskip 0.3cm

\noindent
{\bf On the necessity of  condition  (b) in Conjecture A.}
We are indebted to Hishi Hideyuki  for suggesting the following example, which shows that assumption (b) is necessary for the validity of  Conjecture A.
\begin{example}\rm\label{HIDEYUKI}
Let 
$$U_{2,1}:=\left\{\left(W,V\right)\in \operatorname {Sym}\left(2,\C\right)\times M_{2,1}\left(\C\right)\mid A\left(W,V\right) >> 0\right\}\subset \C^5,
$$
where
$$
A\left(W,V\right)=I_2-W \ov{W}-\frac{1}{2} V {\bar V}^t-\frac{1}{2}\left(I_2-W\right)\left(I_2-\ov W\right)^{-1} \ov V V^t \left(I_2-W\right)^{-1}\left(I_2-\ov W\right)
$$
It can be easily seen that $U_{2, 1}$ is acted upon transitively by the group $O(2, \R)\times U(1)$, via the action $(W, V)\mapsto (AW^tA, AVB)$.
 Hence 
$U_{2, 1}$ is a homogeneous bounded domain.  One can also directly verify that
$U_{2, 1}$ is not symmetric and compute 
its Bergman Kernel, which  is given by
$$
K_{U_{2,1}}\left(W,V\right)=C\det A\left(W,V\right)^{-4}
$$
where $C$ is a constant (whose exact value is not important for our aims).  
Consider the Bergman metric $g_B$ on $U_{2, 1}$
whose associated \K\ form is given by:
$$\w_B=-\frac{2i}{\pi}\de\deb \log \det A\left(W,V\right).$$
Notice that $g_B$ is homogeneous and  hence KE \cite[Th.4.1]{KOBAYASHI1959GeomBounded}.
It is not hard to verify  
that the Calabi's diastasis function centred at  the origin of  $g_B$ is given by
$$
D^{g_B}_0(W,V)=-\log \det A\left(W,V\right).
$$
Notice that by applying the \lq\lq dual trick'' \eqref{diastasisstar}, one gets 
$$(D^{g_B}_0)^*(W,V)=\log \det A^*\left(W,V\right)$$
where 
$$A^*\left(W,V\right)=I_2+W \ov{W}+\frac{1}{2} V {\ov V}^t+\frac{1}{2}\left(I_2-W\right)\left(I_2+\ov W\right)^{-1} \ov V V^t \left(I_2-
W\right)^{-1}\left(I_2+\ov W\right).$$
Since the later is  a well-defined  real-valued function on 
$\C^{5}$, 
by the remarks before the proof of Theorem \ref{mainteor3}  the exact domain $(U_{2, 1}, g_B)$
admits a \K\ dual $(U^*_{2, 1}, g^*_B)$, $U^*_{2, 1}\subset \C^5$, such that 
$D_0^{g_B^*}={(D^{g_B}_0)^*}_{|U^*_{2, 1}}$, where $U^*_{2, 1}$
is the maximal domain of definition of $D_0^{g_B^*}$.

Consider the real-valued $(1, 1)$-form $\omega$ 
on $\C^5$ given by
\begin{equation}\label{omegafond}
\omega=\frac{2i}{\pi}\partial\bar\partial\log\det A^*(W, V)
\end{equation}
and let $g$ be the corresponding symmetric two tensor
(thus $g_B^*=g_{|U_{2, 1}^*}$).

Since the  Bergman metric $g_B$ is infinitely projectively induced 
and  it is complete (by homogeneity) it  follows 
that $(U_{2, 1}, \alpha g_B)$ admits a Fubini-Study completion (cf. Remark \ref{rmkcompl}) for all 
$\alpha\in\Z^+$ 
(and hence (a) in Conjecture A is satisfied for such $\alpha$).

We want to prove that the dual \K\ metric $\alpha g_B^*$
on $U_{2, 1}^*\subset \C^5$
is never  finitely projectively induced, for $\alpha >0$.
This will show that assumption (b) in Conjecture A
is necessary for its validity.
In order to achieve this  result
consider the holomorphic curve
$$
\gamma:\C\ni z \mapsto \left(\left(\begin{array}{ll}
0 & z \\
z & 0
\end{array}\right),\binom{z}{0}\right)\in\C^5.
$$
Let  $\Gamma\subset \C$ be   the connected component of the origin,  where $g_\gamma:=\gamma^*g$ is a \K\ metric.
Thus,   
\begin{equation}\label{positive}
h(z):=\frac{\partial^2[\log\det A^*\left(\gamma(z)\right)]}{\partial z\partial\bar z}>0, \ \forall z\in\Gamma.
\end{equation}

By a direct computation or with the aid 
of Mathematica one can show that 
$$
h(x+\frac{i}{2}x)=
\frac{P(x)}{Q(x)},\ \forall x\in\R.
$$
where 
$$P(x)=4\left(12288+4096 x^2-62592 x^4-12320 x^6-19800 x^8-138750 x^{10}+78125 x^{12}\right)$$
and $$Q(x)=\left(128+288 x^2-120 x^4-50 x^6+625 x^8\right)^2>0.$$

Thus, if $x_0$ is the smallest positive root of $P(x)$ it follows by \eqref{positive} that the line segment 
$$\left\{x+\frac{i}{2}x \mid  x \in (-x_0,x_0) \right\}$$
is a subset of $\Gamma$.
Let $W$ be an open  and simply-connected open neighborhood of the origin of $\C$ such that 
$$\left\{x+\frac{i}{2}x \mid  x \in (-x_0,x_0) \right\}\subset W\subset \Gamma$$
Assume now, by a contradiction, that there exists 
$\alpha>0$ such that $\alpha g^*_B$ is finitely projectively induced. 
Then, since $\alpha g^*_{|U_{2, 1}}=\alpha g_B^*$ there exists an 
 open  neighborhood   of the origin contained in $W$ such that $\alpha {g_\gamma}$
 is finitely projectively induced on this neighborhood. 
 
By combining   Calabi's resolvability  \cite[Th.10]{CALABI1953diast} and Calabi's extension \cite[Th.11]{CALABI1953diast}   (since $W$ is simply-connected) also ${\alpha g_\gamma}_{|W}$ is finitely projectively induced by the same finite complex projective space, say $(\C P^N, g_{FS})$.
Now, again with the aid of Mathematica, the holomorphic sectional  curvature $K(z)$ of  
${g_\gamma}_{|W}$ (which in this case is simply the Gaussian curvature)  satisfies 
$$K(x+\frac{i}{2}x)=-\frac{1}{h}\frac{\partial^2\log h}{\partial z\partial\bar z}_{|z=x+\frac{i}{2}x}=\frac{R(x)}{(P(x))^2},\ \forall x\in (-x_0, x_0),$$
with $R(x)$ a polynomial of degree $36$ with positive coefficients and then satisfying  $R(x)>0$.
Thus 
$$\lim_{x\to x_0^-}K\left(x+\frac{i}{2}x\right)=+\infty.$$
This yields the desired contradiction, since by 
the Gauss-Codazzi equation for Kähler submanifolds (see e.g. \cite[Prop. 9.2]{KobNoBook1996Foundation},  the holomorphic sectional curvature of 
${\alpha g_\gamma}_{|W}$ must be bounded from above  by the constant holomorphic sectional curvature of  the  finite dimensional  complex projective space $(\C P^N, g_{FS})$.  This concludes the example.
\end{example}

\begin{remark}\rm\label{rmkhid}
In the previous example we are showing that $\alpha g_B^*$ is not projectively induced, a condition which is  weaker then (b) in Conjecture A (cf. Proposition \ref{weakbgen} below).
\end{remark}

\begin{remark}\rm
Example \ref{HIDEYUKI} should be compared with the proofs of 
Proposition \ref{propproj}  above. 
 By  a straightforward computation the holomorphic sectional curvature $K$ of the metric  $g$ in the proof of the proposition  is bounded for $\mu\geq 2$ (for example, for $\mu=5/2$, $\lim_{|z|\to \infty}K(\xi)=4/5$). Thus, we cannot apply there the argument used in Example \ref{HIDEYUKI}  to conclude that $g$ is not projectively induced and we need to use the full strength of Calabi's theory. Similar considerations apply to the metric $g$ in Proposition \ref{propalphamu}.
 \end{remark}

\begin{remark}\rm
Notice that given  any homogeneous bounded domain $(\Omega, g)$, with $g$
not necessarily KE,
then $\alpha g$ is infinitely projectively induced for all $\alpha$
sufficiently large (see \cite[Th.1.1]{LOIMOSSA2015rmkhom}) and so  $(\Omega, \alpha g)$
admits a Fubini-Study completion (cf. Remark \ref{rmkcompl}).
Thus, Example \ref{HIDEYUKI} stimulates the following  question, which the authors will consider in a future paper:
 if a homogeneous bounded domain  $(\Omega, g)$  admits a \K\ dual $(U^*, g^*)$
 such that $\alpha g^*$ is finitely projectively induced for some 
 $\alpha >0$, is it true that $(U, g)$ is biholomorphically isometric to a bounded symmetric domain?
 \end{remark}

\vskip 0.3cm

\noindent
{\bf On the necessity of  condition  (a) in Conjecture A.}
In order to understand the role of assumption (a) in Conjecture A
we first recall
the following two conjectures in  the context of   flag manifolds.

 \vskip 0.3cm
 
\noindent
{\bf Conjecture B.}
{\em
Let $(M, g)$ be a compact KE manifold such that $g$ is (finitely) projectively induced. Then  $(M, g)$ is biholomorphically isometric to a 
flag manifold $(F, g_F)$.}

\vskip 0.3cm
 
\noindent
{\bf Conjecture C.} {\em
Let $(F, g_F)$ be a flag manifold of complex dimension $n$. Then, for all $p\in F$ there exists a holomorphic embedding
with dense image $J_F:\C^n\rightarrow F$, 
$J_F(0)=p$, 
such that if $(\C^n, J_F^*g_F)$ admits a \K\ dual 
then $(F, g_F)$
is biholomorphically isometric to 
a Hermitian symmetric space of compact type.}

\vskip 0.3cm
Conjecture B is a long-standing conjecture which has been affirmatively solved in codimension $\leq 2$  (\cite{CHERN1967einstein}, \cite{SMITH1967hyper}), 
for complete intersections \cite{HANO1975},  for toric varieties with $n\leq 4$ \cite[Prop.4.2]{AREZZOLOIZUDDAS2012hom} and for $\T^n$-invariant \K\ metrics with $n\leq 6$ 
 in the recent  preprint \cite{MANNOSALIS2024einstein}.
The  assumption of KE in Conjecture B can be weakened 
 by requiring $g$ is a  \K-Ricci soliton, 
 since a finitely projectively induced \K-Ricci solitons is trivial
 \cite[Th.1.1]{LOIMOSSA2021IsomIntoCSF}).
 Notice also that Conjecture B is not valid for infinitely projectively induced KE metrics
(see \cite[Th.1]{LOIZEDDA2011KEsubProjInf} and also \cite{HAOWANGZHANG2015einstein}). For example,  the metric $\alpha g_{\Omega, \mu}$,  on the CH domain $M_{\Omega, \mu}$ is infinitely  projectively induced 
 and KE for $\mu=\frac{\gamma}{n+1}$ and  sufficiently large $\alpha$, as it follows by Theorem C and Proposition \ref{ioemiki}.
Conjecture C represents an extension of Theorem \ref{mainteor3}
to all flag manifolds, not necessarily of classical types.
Notice that for such  manifolds  the existence of 
the embedding $J_F$ remains an open issue.

We now observe  that  the validity of Conjectures B and C implies   the  validity of Conjecture A, 
even without assumption (a) (hence we suspect  assumption (a) can be dropped from Conjecture A).
Indeed, assume that $(U, g)$ is an exact domain of $\C^n$ admitting a \K\ dual $(U^*, g^*)$.
 Then, the    Fubini-Study compactification of $(U^*, \alpha g^*)$ (assumption (b))
is KE by Lemma \ref{dualKE}). Thus, if Conjecture B is valid this compactification turns out to be a  flag manifold $(F, g_F)$
and if also Conjecture C is true, we have $U^*=J_F(\C^n)$ and $(F, g_F)$ is biholomorphically isometric to a  Hermitian symmetric space of compact type
and hence $(U, g)$ is biholomorphically isometric to  a bounded symmetric domain.

One wonders if the conclusion of 
 Conjecture A  is still valid when weakening the assumption 
(b) from the existence of a Fubini-Study compactification
of $(U^*, \alpha g^*)$,  to the requirement that 
the \K\ metric $\alpha g^*$ is finitely projectively induced (cf. Remarks  \ref{weakbCH} and \ref{rmkhid}) .
It is interesting to notice that 
this is indeed the case if one assumes  the validity of Conjectures B and C 
as expressed by the following

\begin{prop}\label{weakbgen}
Let $(U, g)$ be an exact domain which admits a \K\ dual $(U^*, g^*)$.
Assume that there exists $\alpha>0$ such that  $(U, \alpha g)$ has a Fubini-Study completion, $\alpha g^*$
is finitely projectively induced and $g$ is KE.
If Conjectures B and  C are true then $(U, g)$ is biholomorphically isometric to  a bounded symmetric domain.
\end{prop}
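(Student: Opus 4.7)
The plan is to upgrade the finite projective inducibility of $\alpha g^*$ to a genuine Fubini-Study compactification of $(U^*,\alpha g^*)$ and then to run, almost verbatim, the informal argument preceding the statement. Let $\varphi\colon(U^*,\alpha g^*)\to(\C P^N,g_{FS})$ be a holomorphic isometric embedding supplied by the hypothesis, and set $V:=\overline{\varphi(U^*)}\subset\C P^N$. By the proper mapping theorem, $V$ is an irreducible projective algebraic subvariety of complex dimension $n$ in which $\varphi(U^*)$ sits as a dense Zariski-open subset. By Lemma \ref{dualKE} the metric $\alpha g^*$ is \K-Einstein, and hence so is the restriction of $g_{FS}$ to $\varphi(U^*)$.

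The crucial step is to establish that $V$ is smooth, so that $\tilde g:=g_{FS}|_V$ is a globally defined \K-Einstein metric on $V$ and $\varphi$ realizes $(V,\tilde g)$ as a Fubini-Study compactification of $(U^*,\alpha g^*)$. Granting this, $(V,\tilde g)$ is a compact KE manifold with $\tilde g$ finitely projectively induced, so Conjecture B yields a biholomorphic isometry $(V,\tilde g)\cong(F,g_F)$ with $(F,g_F)$ a flag manifold. Composing $\varphi$ with this isometry produces a holomorphic isometric embedding with dense image $J\colon(U^*,\alpha g^*)\hookrightarrow(F,g_F)$, with $J(0)=:p\in F$. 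Comparing the Calabi diastasis function of $\alpha g^*$ at the origin with that of $J_F^*g_F$ at the origin, where $J_F\colon\C^n\to F$ is the canonical embedding with $J_F(0)=p$ supplied by Conjecture C, and invoking the maximality clause in Definition \ref{exactdomain}, one deduces $U^*=J_F(\C^n)$ and $\alpha g^*=J_F^*g_F$ (up to a biholomorphism of $\C^n$ fixing the origin). Since $(U^*,\alpha g^*)$ admits the \K\ dual $(U,\alpha g)$ by hypothesis, Conjecture C forces $(F,g_F)$ to be a Hermitian symmetric space of compact type, and the reciprocal \K\ duality then identifies $(U,g)$ with the bounded symmetric domain of noncompact type dual to $F$, as required.

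The main obstacle is precisely the smoothness of $V$: this is where assumption (a) was used implicitly in the informal argument preceding the proposition, since (a) delivered a manifold compactification from the outset. Removing (a) requires a regularity statement to the effect that a positive KE metric, finitely projectively induced on a Zariski-dense open subset of an irreducible projective subvariety $V\subset\C P^N$, forces $V$ to be smooth. A natural line of attack is to combine Calabi's rigidity theorem (\cite[Th.9]{CALABI1953diast}), applied in a neighbourhood of a putative singular point of $V$, with regularity results of Cheeger-Colding-Tian type for singular KE Fano varieties; alternatively, one may exploit the hereditary property of Calabi's diastasis to reduce smoothness of $V$ to smoothness of a suitable one-dimensional slice through a singular point, where the problem is more tractable. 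This regularity step is the only non-formal ingredient of the plan and, in our view, the chief difficulty.
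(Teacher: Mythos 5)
Your overall architecture (produce a compact KE manifold containing $(U^*,\alpha g^*)$, feed it to Conjecture B, then use Conjecture C and duality) matches the paper's, but the step you yourself flag as the ``chief difficulty'' is a genuine gap, and it is compounded by an earlier error. First, the appeal to the proper mapping theorem is not available: a holomorphic isometric embedding $\varphi\colon U^*\to\C P^N$ of a noncompact domain is never proper onto a closed analytic set (a proper embedding would have closed, hence compact, image), so there is no a priori reason for $V=\overline{\varphi(U^*)}$ to be an algebraic (or even analytic) subvariety with $\varphi(U^*)$ Zariski-open in it. Second, even granting algebraicity, you leave the smoothness of $V$ unproved, offering only a speculative reduction to Cheeger--Colding--Tian type regularity. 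As it stands the argument does not close.

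The paper bypasses both issues with a single known result: Hulin's extension theorem \cite[Th.4.5]{HULIN1996sous}, which states that a finitely projectively induced KE metric on an open manifold extends to a \emph{complete} KE manifold $(P,h)$ with $h$ still finitely projectively induced. This replaces your topological closure by a genuinely smooth complete object from the outset. One then needs the Einstein constant $\lambda^*/\alpha$ of $\alpha g^*$ to be positive in order to conclude compactness of $P$ via Bonnet--Myers; this is where assumption (a) enters (a point your proposal never addresses): $\lambda^*\neq 0$ because a Ricci-flat metric cannot be finitely projectively induced \cite[Cor.1.7]{arezzo2024gromovhausdorfflimitsholomorphicisometries}, and $\lambda^*<0$ would force $\lambda>0$, making the Fubini--Study completion of $(U,\alpha g)$ compact by Bonnet--Myers, contradicting that $\alpha g$ is infinitely projectively induced. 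With $(P,h)$ compact, Conjectures B and C apply exactly as in your final paragraph. If you want to salvage your route, you would need to supply both the analyticity of the closure and the regularity statement, neither of which is standard; the Hulin extension is the missing idea.
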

\begin{proof}
By Remark \ref{dualKE} we know that $(U^*,\alpha g^*)$
is KE with Einstein constant $\frac{\lambda^*}{\alpha}$, where $\lambda=-\lambda^*$ is the Einstein constant of $g$.
By  \cite[Th.4.5]{HULIN1996sous} (see also 
\cite[Th.1.2]{li2023extensionsextremalkahlersubmanifolds}
for case of  extremal \K\ metrics) since $\alpha g^*$ is finitely projectively induced, there exists a complete KE manifold  $(P, h)$ such that $U^*\subset P$,  $h_{|U^*}=\alpha g^*$, $h$ is KE (with Einstein constant $\frac{\lambda^*}{\alpha}$),  and 
$h$ is still finitely projectively induced. 

We claim that $\lambda^*>0$. 
Indeed $\lambda^*\neq 0$ by \cite[Cor.1.7]{arezzo2024gromovhausdorfflimitsholomorphicisometries} asserting that a Ricci flat metric cannot be finitely projectively induced.
Moreover, $\lambda^*$ cannot be negative otherwise $\lambda>0$ and the  Fubini-Study completion 
of $(U, \alpha g)$ would be compact by Bonnet-Myers theorem, in contrast to the fact that $\alpha  g$ is infinitely projectively induced.
By applying again Bonnet-Myers theorem we deduce that $P$ is compact.
Thus, by the validity of Conjectures B and C, we deduce that  $(P, h)$ is biholomorphically isometric to a Hermitian symmetric space of compact type
$(\Omega_c, \alpha g_{\Omega_c})$ and hence $(U, g)$ is biholomorphically isometric to  the bounded symmetric domain $(\Omega, g_{\Omega})$. 
\end{proof}

\begin{remark}\rm\label{necessitya}
We note that in the proof of the  proposition   we have used assumption  (a) of Conjecture A only to  deduce that $\lambda^*>0$. 
This probably can be obtained by the fact that $\lambda^*<0$ is conjecturally  forbidden by \cite[Conjecture 1]{LOISALISZUDDAS2021extremal} asserting that the finitely projectively induced KE metric $\alpha g^*$ has non-negative Einstein constant.
Notice also  that in the compact case a finitely projectively 
induced KE metric has positive Einstein constant (see \cite{HULIN2000ke}
for a proof), a fact that we cannot use in our proof since $U^*$ is not compact. 
\end{remark}

\section{The case of generalized CH domains}\label{finrmk}
Our  results regarding CH domains can be 
suitably  extended to generalized CH domains.
By a  {\em generalized CH domain based on a bounded symmetric $\Omega$}  we mean the  bounded domain in $\C ^{n+1}$
given by
\begin{equation*}
M_{\Omega, \bar{\mu}}:=\left\{(z, w) \in \Omega \times\mathbb{C}\,  | \, \left|w\right|^2<\prod_{j=1}^sN_{\Omega_j}^{\mu_j}(z_j, \bar{z_j})\right\}
\end{equation*}
where $\Omega=\Omega_1\times\cdots\times\Omega_s$ and each
$\Omega_j\subset \C^{n_j}$, $n=n_1+\cdots +n_s$, is a Cartan domain of genus $\gamma_j$,
$\bar{\mu}=(\mu_1, \dots, \mu_s)$ is a $s$-vector  with positive  entries $\mu_j$ and 
$z=(z_1,\dots  , z_s)\in\Omega$, with $z_j\in \Omega_j\subset \C^{n_j}$, $j=1, \dots ,s$.
A  generalized CH domain  $M_{\Omega, \bar{\mu}}$
can be   equipped with two natural complete 
\K\ metrics $g_{\W,\bar{\mu}}$ and $\hat g_{\W,\bar{\mu}}$ (generalizing 
$g_{\Omega, \mu}$ and  $\hat g_{\Omega, \mu}$ when $\mu\in\R$ and $\Omega$ is irreducible) whose associated \K\ forms are  
$$\wk_{\W,\bar{\mu}}=-\frac{i}{2\pi}\de\deb\log\left( N_\Omega^{\bar{\mu}}(z, \bar z)-|w|^2\right)$$
$$\hat\wk_{\W,\bar{\mu}}=-\frac{i}{2\pi}\de\deb\log\left(N_\Omega^{\bar{\mu}}(z, \bar z)-|w|^2\right)
-\frac{i}{2\pi}\de\deb\log N_\Omega^{\bar{\mu}}(z, \bar z),$$
where 
$N_\Omega^{\bar{\mu}}(z, \bar z)=\prod_{j=1}^sN_{\Omega_j}^{\mu_j}(z_j, \bar{z_j})$.
It is not hard to see that 
exact domains $(M_{\Omega, \bar\mu}, g_{\Omega, \bar\mu})$
and $(M_{\Omega, \bar\mu}, \hat g_{\Omega, \bar\mu})$
admit \K\ duals
$(\C^{n+1}, g^*_{\Omega, \bar\mu})$ and 
$(\C^{n+1}, \hat g^*_{\Omega, \bar\mu})$,  respectively,  
where the corresponding  associated \K\ forms are given by 
$$\wk^*_{\W,\bar{\mu}}=\frac{i}{2\pi}\de\deb\log\left( N_\Omega^{\bar{\mu}}(z, -\bar z)+|w|^2\right)$$
$$\hat\wk^*_{\W,\bar{\mu}}=\frac{i}{2\pi}\de\deb\log\left(N_\Omega^{\bar{\mu}}(z, -\bar z)+|w|^2\right)
+\frac{i}{2\pi}\de\deb\log N_\Omega^{\bar{\mu}}(z, -\bar z),$$
The following proposition extends Propositions  \ref{propproj}, \ref{ioemiki}, \ref{propmod} and \ref{propalphamu}.
Its proof is omitted  since it  can be easily obtained by reducing to the irreducible case.
\begin{prop}\label{propgenCH}
The \K\ metrics $\alpha g_{\Omega, \bar\mu}$ and $\alpha \hat g_{\Omega, \bar\mu}$ are infinitely projectively induced for sufficiently large  $\alpha$.
The \K\ metrics $\alpha g^*_{\Omega, \bar\mu}$ and $\alpha \hat g^*_{\Omega, \bar\mu}$ are finitely projectively induced iff $\alpha\in\Z^+$
and $\mu_j\in\Z^+$, for $j=1, \dots, s$.
\end{prop}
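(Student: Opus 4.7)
The strategy is to reduce every statement to the already established irreducible results (Propositions \ref{propproj}, \ref{ioemiki}, \ref{propmod} and \ref{propalphamu}) by exploiting the product structure $N_\Omega^{\bar\mu}(z,\bar z)=\prod_{j=1}^s N_{\Omega_j}^{\mu_j}(z_j,\bar z_j)$ and the hereditary property of Calabi's diastasis. I would proceed in three steps.

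\emph{Step 1: Infinite projective inducibility of $\alpha g_{\Omega,\bar\mu}$ and $\alpha\hat g_{\Omega,\bar\mu}$ for $\alpha\gg 0$.}
One first writes Calabi's diastasis at the origin in closed form, namely
$D_0^{\alpha g_{\Omega,\bar\mu}}=-\alpha\log(N_\Omega^{\bar\mu}-|w|^2)$ and $D_0^{\alpha\hat g_{\Omega,\bar\mu}}=-\alpha\log(N_\Omega^{\bar\mu}-|w|^2)-\alpha\log N_\Omega^{\bar\mu}$, each factoring as a product of the corresponding single-factor quantities. As in the proof of Proposition \ref{propmod}, I would apply Calabi's criterion to $e^{D}-1$: the invariance under each $S^1$-action $z_j\mapsto e^{i\theta_j}z_j$ and $w\mapsto e^{i\phi}w$ forces the matrix of coefficients $(B_{jk})$ to have a refined block-diagonal structure indexed by the multi-indices $(|m|_{n_1},\dots,|m|_{n_s},m_{n+1})$. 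Following the same reduction as in Proposition \ref{propmod}, each block reduces to the positive-definiteness of the Taylor matrix of $\exp\bigl((2\alpha+t)\mu_j D_0^{g_{\Omega_j}}\bigr)$ (respectively $(\alpha+t)\mu_j D_0^{g_{\Omega_j}}$ in the $g_{\Omega,\bar\mu}$ case) for various $t\in\Z_{\geq 0}$. By Theorem A, each of these is positive-definite precisely when the relevant multiple of $\mu_j$ lies in $W(\Omega_j)\setminus\{0\}$; since $W(\Omega_j)$ contains the whole half-line $((r_j-1)a_j/2,+\infty)$, this is automatic once $\alpha$ is large enough.

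\emph{Step 2: Sufficiency for the dual metrics.}
Assume $\alpha\in\Z^+$ and all $\mu_j\in\Z^+$. For each $j$ I take the Borel--Weil-type embedding $F_{\mu_j}\colon\Omega_{j,c}\to\C P^{d_{\mu_j}}$ of \eqref{eqembdcarmeno} satisfying $F_{\mu_j}^*g_{FS}=\mu_j g_{\Omega_{j,c}}$ with $F_{\mu_j,0}\circ J_j$ nowhere vanishing on $\C^{n_j}$. Composing the product $F_{\mu_1}\times\cdots\times F_{\mu_s}$ with the iterated Segre embedding gives a holomorphic embedding of $\Omega_c=\prod_j\Omega_{j,c}$ pulling back $g_{FS}$ to $\sum_j\mu_j\pi_j^*g_{\Omega_{j,c}}$. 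For $g^*_{\Omega,\bar\mu}$ I then mimic \eqref{eqembdcarp} by appending the component $w\cdot(\text{chosen nonvanishing section})$, obtaining a holomorphic isometry of $(\C^{n+1},g^*_{\Omega,\bar\mu})$ into a finite-dimensional projective space. For $\hat g^*_{\Omega,\bar\mu}$ I apply instead the bundle construction of Theorem \ref{mainteor2}, replacing the Kodaira embedding of $\Omega_c$ by the just-built product embedding and building the corresponding $\C P^1$-bundle $P_{\Omega_c,\bar\mu}$; the same blow-up and Segre trick produces the required finite projective embedding. Finally, Calabi's map $V_\alpha$ takes care of the overall multiplier $\alpha\in\Z^+$.

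\emph{Step 3: Necessity for the dual metrics.}
Suppose that $\alpha g^*_{\Omega,\bar\mu}$ (respectively $\alpha\hat g^*_{\Omega,\bar\mu}$) is finitely projectively induced, and fix an index $j$. By Calabi's hereditary property, the restriction of the diastasis to the totally geodesic slice $\{z_k=0 \text{ for } k\neq j\}\subset\C^{n+1}$ is again the diastasis of the induced metric, and that restriction coincides (because $N_{\Omega_k}(0,\bar 0)\equiv 1$) with the diastasis of $\alpha g^*_{\Omega_j,\mu_j}$ (resp.\ $\alpha\hat g^*_{\Omega_j,\mu_j}$) on $\C^{n_j+1}$. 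Thus the restricted metric is still finitely projectively induced, and Proposition \ref{propproj} (resp.\ Proposition \ref{propalphamu}) applies, forcing $\alpha\in\Z^+$ and $\mu_j\in\Z^+$. Doing this for each $j$ yields the claim.

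\emph{Main obstacle.} The bookkeeping in Step 1 is the most delicate point: one must verify that the block decomposition of the diastasis matrix really does split along the multigrading induced by the $s+1$ independent $S^1$-actions, and that every resulting sub-block is controlled by a condition of the form $(c\alpha+t)\mu_j\in W(\Omega_j)\setminus\{0\}$ for suitable $c\in\{1,2\}$. Once this structural statement is in place, the argument is a direct multi-index adaptation of the proof of Proposition \ref{propmod}, with no new analytic ingredient.
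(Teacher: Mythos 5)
Your proposal is correct and follows exactly the route the paper intends: the authors omit the proof of Proposition \ref{propgenCH}, stating only that ``it can be easily obtained by reducing to the irreducible case,'' and your three steps (the multigraded block decomposition for the completeness of the Wallach-set conditions, the Segre-product construction for sufficiency, and the restriction to the slices $\{z_k=0,\ k\neq j\}$ plus Calabi's hereditary property for necessity) are precisely that reduction carried out in detail. No gaps; the bookkeeping you flag in Step 1 goes through verbatim as in Proposition \ref{propmod} because the coefficient matrix of a product of diastasis exponentials in disjoint variable groups is a Kronecker product of the individual ones.
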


Using this proposition and by noticing that the proofs of  Theorem \ref{mainteor},Theorem \ref{mainteor2} and Proposition  \ref{notrelaxhom} 
work also when  $\Omega$ is an arbitrary bounded symmetric domain, not necessarily irreducible,  one gets the following results for generalized CH domains.

\begin{theor}\label{mainteor4}
The generalized   CH domain $(M_{\Omega, \alpha\bar\mu}, g_{\Omega, \bar\mu})$
admits a Fubini-Study completion for all  sufficiently large $\alpha$. 
Additionally,  $(\C^{n+1}, \alpha g_{\Omega, \bar\mu}^*)$ admits a Fubini-Study compactification for some $\alpha$
iff $\bar\mu=\mu=1$ and  $\Omega=\C H^n$.
 \end{theor}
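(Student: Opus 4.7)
The plan is to follow the same blueprint used for Theorem \ref{mainteor}, substituting the single Borel--Weil embedding with a Segre-type product of Borel--Weil embeddings indexed by $\bar\mu$. For the Fubini--Study completion statement, it suffices to combine Proposition \ref{propgenCH} (which ensures $\alpha g_{\Omega, \bar\mu}$ is infinitely projectively induced for all sufficiently large $\alpha$) with the completeness of $g_{\Omega, \bar\mu}$ and Remark \ref{rmkcompl}, choosing $(\tilde M, \tilde g) = (M_{\Omega, \bar\mu}, \alpha g_{\Omega, \bar\mu})$ and $J = \mathrm{id}$. One direction of the compactification statement is also immediate: if $s=1$, $\Omega = \C H^n$, and $\mu = 1$, then $(M_{\Omega, \mu}, g_{\Omega, \mu}) = (\C H^{n+1}, g_{hyp})$ and $(\C P^{n+1}, g_{FS})$ is the required Fubini--Study compactification.

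The substantive direction is the converse: assume $(\C^{n+1}, \alpha g^*_{\Omega, \bar\mu})$ admits a Fubini--Study compactification. By Lemma \ref{lemmamain} we may assume $\alpha = 1$. Proposition \ref{propgenCH} forces $\mu_j \in \Z^+$ for all $j = 1, \dots, s$, so each Cartan factor $\Omega_{j,c}$ admits a Borel--Weil embedding $F_{\mu_j}: \Omega_{j,c} \to \C P^{d_{\mu_j}}$ with $F_{\mu_j}^* g_{FS} = \mu_j g_{\Omega_{j,c}}$. Composing the product of these maps with a Segre embedding yields $F_{\bar\mu}: \Omega_c \to \C P^{D}$ (with $D = \prod_j (d_{\mu_j}+1) - 1$) such that $F_{\bar\mu}^* g_{FS} = \sum_j \mu_j g_{\Omega_{j,c}}$, and one extends it as in \eqref{eqembdcarp} to a holomorphic isometric embedding $\tilde F_{\bar\mu}: \C^{n+1} \to \C P^{D+1}$ inducing $g^*_{\Omega, \bar\mu}$. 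Calabi's rigidity then shows the postulated embedding into projective space coincides, up to a unitary transformation, with $\tilde F_{\bar\mu}$.

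The closure argument of Theorem \ref{mainteor} transfers verbatim: the image of the Fubini--Study compactification identifies with
\[
K = \{[X_0, \dots, X_D, X_{D+1}] \mid P_j(X_0, \dots, X_D) = 0, \ j = 1, \dots, k\},
\]
where $P_j$ are the homogeneous equations cutting out $F_{\bar\mu}(\Omega_c) \subset \C P^D$, and all of them have degree $\geq 2$ provided $F_{\bar\mu}(\Omega_c) \neq \C P^D$. The implicit function theorem applied at the smooth point $[0,\dots,0,1] \in K$ would force at least one $P_j$ to be linear, which is impossible unless $F_{\bar\mu}$ is surjective onto $\C P^D$.

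The main obstacle, and the place where the argument genuinely generalizes the original one, is to verify that $F_{\bar\mu}(\Omega_c) = \C P^D$ occurs only when $s = 1$, $\Omega_1 = \C H^n$, and $\mu_1 = 1$. The Segre embedding of a non-trivial product of projective spaces is never surjective, so one must have $s = 1$; once $s = 1$, surjectivity of $F_{\mu_1}$ onto some $\C P^D$ forces the Veronese power $\mu_1$ to be $1$ (higher Veronese maps have proper image) and forces $\Omega_{1,c}$ to be biholomorphic to its image, which among irreducible compact Hermitian symmetric spaces happens only for $\Omega_{1,c} = \C P^n$. This reduces the situation to $\Omega = \C H^n$ and $\mu = 1$, completing the proof.
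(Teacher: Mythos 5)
Your proposal is correct and follows essentially the same route as the paper, which proves this theorem simply by observing that Proposition \ref{propgenCH} together with the argument of Theorem \ref{mainteor} carries over verbatim to reducible $\Omega$. The only content you add beyond the paper's (deliberately terse) proof is spelling out the reducible case explicitly — replacing the single Borel--Weil embedding by the Segre product of the $F_{\mu_j}$'s and noting that such a product is never surjective onto $\C P^D$ unless $s=1$, $\mu_1=1$ and $\Omega_{1,c}=\C P^n$ — which is exactly the verification the paper leaves implicit.
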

 
 \begin{theor}\label{mainteor5}
For $\mu_j\in \Z^+$, $j=1, \dots, s$, and  sufficiently large integer  $\alpha$, 
the generalized CH domain
$(M_{\Omega, \bar\mu},  \alpha\hat g_{\Omega, \bar\mu})$
admits  a Fubini-Study completion 
and 
$(\C^{n+1}, \alpha\hat g_{\Omega, \bar\mu}^*)$
admits a Fubini-Study compactification $(P_{\Omega_c, \bar\mu}, g_{P, \bar\mu})$. 
Moreover, if one writes  $\Omega_c=G/K$ then
$P_{\Omega_c, \mu}$
 is a cohomogeneity $1$ $G$-space, the action of $G$ is ordinary 
 (with $\Omega_c$
as the flag associated to the regular orbit)
and $g_{P, \bar\mu}$
is a $G$-invariant \K\  metric.
\end{theor}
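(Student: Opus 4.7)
The plan is to adapt, almost verbatim, the arguments used to prove Theorem \ref{mainteor2} and Proposition \ref{notrelaxhom}, replacing the single parameter $\mu$ by the multi-index $\bar\mu=(\mu_1,\dots,\mu_s)$ and the irreducible Cartan domain $\Omega$ by the product $\Omega=\Omega_1\times\cdots\times\Omega_s$. First I would dispatch the Fubini-Study completion of $(M_{\Omega,\bar\mu},\alpha\hat g_{\Omega,\bar\mu})$: the completeness of $\hat g_{\Omega,\bar\mu}$ follows exactly as in Proposition \ref{propmod}, since the argument based on \eqref{normcurve} and \eqref{comparIntegrals} only uses that $g_{\Omega,\bar\mu}$ is complete and that the fiber metric is positive, both of which remain true in the product setting. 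Combined with Proposition \ref{propgenCH}, which provides infinite projective inducibility of $\alpha\hat g_{\Omega,\bar\mu}$ for $\alpha$ sufficiently large, this yields the desired Fubini-Study completion (cf. Remark \ref{rmkcompl}).

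Next I would construct the compactification $(P_{\Omega_c,\bar\mu},g_{P,\bar\mu})$. Let $L_j\to(\Omega_j)_c$ be the holomorphic line bundle with $c_1(L_j)=[\omega_{(\Omega_j)_c}]$ and set $L^{\bar\mu}:=L_1^{\otimes\mu_1}\boxtimes\cdots\boxtimes L_s^{\otimes\mu_s}$ over $\Omega_c=(\Omega_1)_c\times\cdots\times(\Omega_s)_c$. For $\mu_j\in\Z^+$ the Borel-Weil sections of $L^{\bar\mu}$ furnish a full holomorphic embedding $F:\Omega_c\to\C P^N$ with $F^*g_{FS}=\sum_j\mu_j\,\omega_{(\Omega_j)_c}$ and $F_0(J(z))\neq 0$ for all $z\in\C^n$, where $J:\C^n\hookrightarrow\Omega_c$ denotes the product of the noncompact-dual embeddings. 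The analogue of \eqref{Ngood} now reads $N_\Omega^{\bar\mu}(z,-\bar z)=\|F(J(z))\|^2/|F_0(J(z))|^2$. Define $P_{\Omega_c,\bar\mu}:=\Delta_{\Omega_c}^*P(\C\oplus L^{\bar\mu})$ and $\hat J:\C^{n+1}\to P_{\Omega_c,\bar\mu}$, $(z,w)\mapsto(J(z),[1,w])$. The biholomorphism $\Psi$ between $Q_{\C P^N}$ and $\Bl_{[0,\dots,0,1]}(\C P^{N+1})$ and the Segre composition of \eqref{identif} are unchanged, giving a finitely projectively induced $G$-invariant metric $g_{P,\bar\mu}$ on $P_{\Omega_c,\bar\mu}$. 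A straightforward computation, identical to the one culminating in \eqref{omegaP}, then yields $\hat J^*\omega_{P,\bar\mu}=\hat\omega^*_{\Omega,\bar\mu}$, while Calabi's map applied to the embedding into $\C P^K$ provides the projective inducibility of $\alpha g_{P,\bar\mu}$ for $\alpha\in\Z^+$.

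For the cohomogeneity $1$ statement I would repeat the proof of Proposition \ref{notrelaxhom}, writing $G$ for the (possibly semisimple) transitive isometry group of $\Omega_c=G/K$, and defining the $G$-action on $P_{\Omega_c,\bar\mu}$ through its lifting $\lambda_g$ to the fibers of $L^{\bar\mu}$ exactly as in \eqref{defaction}-\eqref{actionBLowup}. The two singular orbits $\{W_0=0\}$ and $\{W_1=0\}$ again project isomorphically onto $F(\Omega_c)$, and Lemma \ref{proporbiteaperte} together with its proof transfer without change once one observes that \eqref{normcal} is a purely local statement that follows from Calabi's rigidity. The only nontrivial point to verify is that the stabiliser identity $\rho(T)=U(1)$ still holds: here the torus $T$ is the one whose centralizer is $K$, and the nontriviality of $\rho$ follows because $L^{\bar\mu}$ is a nontrivial line bundle as soon as some $\mu_j>0$. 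The ordinarity condition \eqref{ordinary} and the identification of $\Omega_c$ as associated flag then follow precisely as at the end of the proof of Proposition \ref{notrelaxhom}.

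I expect the only mildly delicate point to be the verification that the $G$-action defined fiberwise via the transition functions of $L^{\bar\mu}$ on a reducible $\Omega_c$ is well-defined and holomorphically isometric for $g_{P,\bar\mu}$; but this reduces to checking compatibility on overlaps of the trivializations $F^{-1}(U_j)\times\C P^1$, which is formally identical to the irreducible case because the sections $F_k$ are simply tensor products of sections on each factor. All other steps are direct transcriptions of the arguments already given, so no new idea is required.
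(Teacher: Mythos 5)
Your proposal is correct and follows essentially the same route as the paper, which proves Theorem \ref{mainteor5} precisely by invoking Proposition \ref{propgenCH} and observing that the proofs of Theorem \ref{mainteor2} and Proposition \ref{notrelaxhom} go through verbatim for a reducible bounded symmetric domain. Your additional details (the external tensor product line bundle $L^{\bar\mu}$, the product form of \eqref{Ngood}, and the nontriviality of the torus representation $\rho$) are exactly the points one must check, and they check out.
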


Note that for generalized CH domains,  
the extremality of the \K\  metric $g_{\Omega, \bar\mu}$ does not imply it is KE, 
as  in the  case of  CH domains (see the proof of Corollary \ref{maincor}).
Nevertheless, Corollary \ref{maincor} also   extends to generalized CH domains, as expressed 
by the following
\begin{cor}\label{maincorgCH}
The \K\ metric  $\alpha g^*_{\Omega, \bar\mu}$ is 
projectively induced and extremal  iff  
$\bar\mu=\mu=1$,  $\Omega=\C H^n$,
 and $\alpha\in\Z^+$.
 \end{cor}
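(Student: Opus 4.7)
The plan is to mirror the proof of Corollary \ref{maincor} in the reducible setting $\Omega=\Omega_1\times\cdots\times\Omega_s$, substituting each irreducible input by its generalized counterpart. The converse direction is immediate: if $\Omega=\C H^n$, $\bar\mu=\mu=1$ and $\alpha\in\Z^+$, then $\alpha g^*_{\Omega,\bar\mu}$ is the pull-back of $\alpha g_{FS}$ via the identity chart $\C^{n+1}\subset\C P^{n+1}$, hence finitely projectively induced (also re-obtained from Proposition \ref{propgenCH}) and KE, therefore extremal. So I would concentrate on the forward direction.

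First, from finite projective inducibility together with Proposition \ref{propgenCH}, one immediately extracts the integrality $\alpha\in\Z^+$ and $\mu_j\in\Z^+$ for $j=1,\dots,s$. Next, the identity $\scal_g(z,\bar z)=-\scal_{g^*}(z,-\bar z)$ recorded in Remark \ref{dualextr} transfers extremality from $\alpha g^*_{\Omega,\bar\mu}$ on $\C^{n+1}$ to $g_{\Omega,\bar\mu}$ on $M_{\Omega,\bar\mu}$: the $(1,0)$-part of the Hamiltonian vector field of $\scal_{g^*}$ is holomorphic iff the same is true for $\scal_g$, because complex conjugation on $\bar z$ is intertwined with holomorphicity in the obvious way.

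The key step, and the main obstacle, is to reduce to the irreducible setting where Zedda's theorem \cite[Th.1]{ZEDDA2012CanonicalMetricCH} is available. I would proceed by a restriction-by-symmetry argument. For each $j$, consider the holomorphic slice
\[
\Sigma_j:=\{(z,w)\in M_{\Omega,\bar\mu}\mid z_k=0 \text{ for } k\neq j\},
\]
which is biholomorphic to the CH domain $M_{\Omega_j,\mu_j}$. Since $N_{\Omega_k}(0,0)=1$ for every $k$, the potential of $g_{\Omega,\bar\mu}$ restricts cleanly on $\Sigma_j$ to the potential of $g_{\Omega_j,\mu_j}$. The product group $\prod_{k\neq j}\Aut(\Omega_k)$, which acts on $M_{\Omega,\bar\mu}$ by holomorphic isometries of $g_{\Omega,\bar\mu}$, fixes $\Sigma_j$ pointwise; its fixed locus $\Sigma_j$ must therefore be invariant under the extremal vector field of $g_{\Omega,\bar\mu}$, and this vector field must restrict on $\Sigma_j$ to the extremal vector field of $g_{\Omega_j,\mu_j}$. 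Hence each $g_{\Omega_j,\mu_j}$ is extremal, so by Zedda's theorem it is KE, and Theorem C then forces $\mu_j=\gamma_j/(n_j+1)$. Combined with $\mu_j\in\Z^+$ and the inequality $\gamma_j/(n_j+1)\leq 1$ recalled in the proof of Corollary \ref{maincor} (equality iff $\Omega_j=\C H^{n_j}$), we deduce $\mu_j=1$ and $\Omega_j=\C H^{n_j}$ for every $j$.

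It remains to rule out $s\geq 2$. Here I would compute the scalar curvature of $g_{\Omega,\bar\mu}$ in the special configuration $\Omega=\C H^{n_1}\times\cdots\times\C H^{n_s}$, $\bar\mu=(1,\dots,1)$: a direct verification (analogous to the KE check behind Theorem C, whose generalized form requires $\mu_j=(n_j+1)/(n+1)$ rather than $\mu_j=1$ when $s\geq 2$) shows that the global extremality equation on $M_{\Omega,\bar\mu}$ fails unless $s=1$. This yields $s=1$, $\mu=1$, $\Omega=\C H^n$, completing the proof. The delicate point throughout is the restriction-of-extremality argument in the previous paragraph: extremality is not preserved by arbitrary holomorphic restriction, and the symmetry-based reasoning above—or, alternatively, an extension along the lines of \cite[Th.1.2]{li2023extensionsextremalkahlersubmanifolds} applied to the dual side—is what makes the reduction to the irreducible case legitimate.
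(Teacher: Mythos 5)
Your outline agrees with the paper on the easy parts (integrality of $\alpha$ and the $\mu_j$ from Proposition \ref{propgenCH}, and the transfer of extremality from $g^*_{\Omega,\bar\mu}$ to $g_{\Omega,\bar\mu}$ via Remark \ref{dualextr}), but the central reduction step has a genuine gap. First, a small but real error: $\prod_{k\neq j}\Aut(\Omega_k)$ does \emph{not} fix $\Sigma_j$ pointwise (automorphisms move the origin of each factor, and they do not even act isometrically on $M_{\Omega,\bar\mu}$ without a compensating twist on $w$); you would need the linear isotropy subgroups at $0$. More seriously, even after that correction the symmetry argument only shows that the extremal field of $g_{\Omega,\bar\mu}$ is tangent to $\Sigma_j$ and that its restriction is the Hamiltonian field, for the induced metric, of the function $\scal_{g_{\Omega,\bar\mu}}\big|_{\Sigma_j}$. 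That makes the \emph{restricted ambient scalar curvature} a Killing potential on $\Sigma_j$; it does not make the \emph{intrinsic} scalar curvature of the induced metric $g_{\Omega_j,\mu_j}$ a Killing potential, and these two functions differ in general (the ambient one sees curvature in the normal directions). So the claim "each $g_{\Omega_j,\mu_j}$ is extremal, hence Zedda's theorem applies" is not established, and the cited extension result of Li concerns extending extremal submanifolds of projective space, not restricting extremality to slices, so it does not close the gap either. Your final step ("a direct verification shows the extremality equation fails unless $s=1$") is also left as an unperformed computation.

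The paper sidesteps all of this by invoking the known characterization of extremality for generalized CH domains (\cite[Th.1.1]{HAO2016genCH}): $g_{\Omega,\bar\mu}$ is extremal iff $\sum_{j=1}^{s}\bigl(n+1-\tfrac{\gamma_j}{\mu_j}\bigr)n_j=0$. Since $\mu_j\in\Z^+$ forces $\tfrac{\gamma_j}{\mu_j}\le\gamma_j\le n_j+1\le n+1$, every summand is nonnegative, so each must vanish; chasing the equality cases in
$\tfrac{\gamma_j}{\mu_j(n+1)}\le\tfrac{\gamma_j}{n+1}\le\tfrac{\gamma_j}{n_j+1}\le 1$
gives $n_j=n$ (so $s=1$, $\Omega$ irreducible) and $\mu_j=\gamma_j/(n+1)$ in one stroke, after which Corollary \ref{maincor} finishes the proof. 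If you want to keep your slice-by-slice strategy, you must either prove that the induced metric on $\Sigma_j$ is genuinely extremal (not just that the ambient scalar curvature restricts to a Killing potential) or replace that step with the global scalar-curvature identity above.
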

 \begin{proof}
 By Remark \ref{dualextr} $g^*_{\Omega, \bar\mu}$ is extremal
 iff $g_{\Omega, \bar\mu}$ is extremal.
 On the other hand, 
 by \cite[Th.1.1]{HAO2016genCH} 
the metric $g_{\Omega, \bar\mu}$ is extremal (iff it has constant scalar curvature)
iff 
\begin{equation}\label{sumgammajs}
\sum_{j=1}^s(n+1-\frac{\gamma_j}{\mu_j})n_j=0.
\end{equation}
Assuming that $\alpha g^*_{\Omega, \bar\mu}$ is 
projectively induced,  we have
\begin{equation}\label{varineq}
\frac{\gamma_j}{\mu_j(n+1)}\leq\frac{\gamma_j}{n+1}\leq \frac{\gamma_j}{n_j+1}\leq 1,\ j=1, \dots ,s.
\end{equation}
where the first  inequality follows by 
the fact that $\mu_j\in\Z^+$, for  $j=1, \dots ,s$
(by Proposition \ref{propgenCH})
and the third inequality is given by \eqref{ingamma}, as $\Omega_j$ is irreducible.
By combining \eqref{sumgammajs} and \eqref{varineq}, we deduce that 
\begin{equation}\label{KEgen}
\mu_j=\frac{\gamma_j}{n+1},\ j=1, \dots , s,
\end{equation}
and $n_j=n$, i.e., 
$\Omega$ is  irreducible. Hence,  we can apply  Corollary \ref{maincor} to get the conclusion.
\end{proof}

\begin{remark}\rm
Notice that conditions \eqref{KEgen} are  equivalent to the assumption 
that the metric  $g_{\Omega, \bar\mu}$ (and hence $g^*_{\Omega, \bar\mu}$, by Lemma \ref{dualKE})  is KE (see \cite[Th.1.2]{HAO2016genCH}).
\end{remark}

\bibliographystyle{IEEEtranSA}
\bibliography{Biblio}

\end{document}